\newtheorem{thm}{Theorem}[section]
\newtheorem{prop}[thm]{Proposition}
\newtheorem{corollary}[thm]{Corollary}
\numberwithin{equation}{section}
\numberwithin{figure}{section}
\title{R-adaptive multisymplectic and variational integrators}
\date{}
\author[1,2]{Tomasz M. Tyranowski\thanks{\texttt{tomasz.tyranowski@ipp.mpg.de}}}
\author[2]{Mathieu Desbrun\thanks{\texttt{mathieu@caltech.edu}}}
\affil[1]{\small Max-Planck-Institut f\"ur Plasmaphysik \authorcr Boltzmannstra{\ss}e 2, 85748 Garching, Germany}
\affil[2]{\small California Institute of Technology, Computing {\small $+$} Mathematical Sciences \authorcr 1200 E. California Blvd., Pasadena, CA 91125, USA}
\begin{document}

\maketitle

\begin{abstract}
Moving mesh methods (also called $r$-adaptive methods) are space-adaptive strategies used for the numerical simulation of time-dependent partial differential equations. These methods keep the total number of mesh points fixed during the simulation, but redistribute them over time to follow the areas where a higher mesh point density is required. There are a very limited number of moving mesh methods designed for solving field-theoretic partial differential equations, and the numerical analysis of the resulting schemes is challenging. In this paper we present two ways to construct $r$-adaptive variational and multisymplectic integrators for (1+1)-dimensional Lagrangian field theories. The first method uses a variational discretization of the physical equations and the mesh equations are then coupled in a way typical of the existing $r$-adaptive schemes. The second method treats the mesh points as pseudo-particles and incorporates their dynamics directly into the variational principle. A user-specified adaptation strategy is then enforced through Lagrange multipliers as a constraint on the dynamics of both the physical field and the mesh points. We discuss the advantages and limitations of our methods. Numerical results for the Sine-Gordon equation are also presented.
\end{abstract}

\section{Introduction}
\label{sec:intro}
The purpose of this work is to design, analyze and implement variational and multisymplectic integrators for Lagrangian partial differential equations with space-adaptive meshes. In this paper we combine geometric numerical integration and $r$-adaptive methods for the numerical solution of PDEs. We show that these two fields are compatible, mostly due to the fact that in $r$-adaptation the number of mesh points remains constant and we can treat them as additional pseudo-particles whose dynamics is coupled to the dynamics of the physical field of interest.

Geometric (or structure-preserving) integrators are numerical methods that preserve geometric properties of the flow of a differential equation (see \cite{HLWGeometric}). This encompasses symplectic integrators for Hamiltonian systems, variational integrators for Lagrangian systems, and numerical methods on manifolds, including Lie group methods and integrators for constrained mechanical systems. Geometric integrators proved to be extremely useful for numerical computations in astronomy, molecular dynamics, mechanics and theoretical physics. The main motivation for developing structure-preserving algorithms lies in the fact that they show excellent numerical behavior, especially for long-time integration of equations possessing geometric properties.

An important class of structure-preserving integrators are \emph{variational integrators} for Lagrangian systems (\cite{HLWGeometric}, \cite{MarsdenWestVarInt}). This type of integrator is based on discrete variational principles. The variational approach provides a unified framework for the analysis of many symplectic algorithms and is characterized by a natural treatment of the discrete Noether theorem, as well as forced, dissipative and constrained systems. Variational integrators were first introduced in the context of finite-dimensional mechanical systems, but later Marsden \& Patrick \& Shkoller~\cite{MarsdenPatrickShkoller} generalized this idea to field theories. Variational integrators have since then been successfully applied in many computations, for example in elasticity (\cite{LewAVI}), electrodynamics (\cite{SternDesbrun}) or fluid dynamics (\cite{Pavlov}). Existing variational integrators so far have been developed on static, mostly uniform spatial meshes. The main goal of this paper is to design and analyze variational integrators that allow for the use of space-adaptive meshes.

Adaptive meshes used for the numerical solution of partial differential equations fall into three main categories: $h$-adaptive, $p$-adaptive and $r$-adaptive.
$R$-adaptive methods, which are also known as \emph{moving mesh methods} (\cite{HuangRussellREVIEW}, \cite{HuangRussellBOOK}), keep the total number of mesh points fixed during the simulation, but relocate them over time. These methods are designed to minimize the error of the computations by optimally distributing the mesh points, contrasting with $h$-adaptive methods for which the accuracy of the computations is obtained via insertion and deletion of mesh points. Moving mesh methods are a large and interesting research field of applied mathematics, and their role in modern computational modeling is growing. Despite the increasing interest in these methods in recent years, they are still in a relatively early stage of their development compared to the more matured $h$-adaptive methods.

\subsubsection*{Overview}

There are three logical steps to $r$-adaptation:

\begin{itemize}
	\item Discretization of the physical PDE
	\item Mesh adaptation strategy
	\item Coupling the mesh equations to the physical equations
\end{itemize}

\noindent
The key ideas of this paper regard the first and the last step. Following the general spirit of variational integrators, we discretize the underlying action functional rather than the PDE itself, and then derive the discrete equations of motion. We base our adaptation strategies on the equidistribution principle and the resulting moving mesh partial differential equations (MMPDEs). We interpret MMPDEs as constraints, which allows us to consider {\it novel} ways of coupling them to the physical equations. Note that we will restrict our explanations to one time and one space dimension for the sake of simplicity.

Let us consider a (1+1)-dimensional scalar field theory with the action functional

\begin{equation}
\label{eq:action}
S[\phi] = \int_0^{T_{max}} \int_0^{X_{max}} \mathcal{L}(\phi,\phi_X,\phi_t)\,dX\,dt,
\end{equation}

\noindent
where $\phi:[0,X_{max}]\times [0,T_{max}] \longrightarrow \mathbb{R}$ is the field and $\mathcal{L}:\mathbb{R} \times \mathbb{R} \times \mathbb{R} \longrightarrow \mathbb{R}$ its Lagrangian density. For simplicity, we assume the following fixed boundary conditions

\begin{align}
\label{eq:bndcond}
\phi(0,t)&=\phi_L, \nonumber \\
\phi(X_{max},t)&=\phi_R.
\end{align}

\noindent
In order to further consider moving meshes let us perform a change of variables $X=X(x,t)$ such that for all $t$ the map $X(.,t):[0,X_{max}]\longrightarrow [0,X_{max}]$ is a \textquoteleft diffeomorphism\textquoteright---more precisely, we only require that $X(.,t)$ is a homeomorphism such that both $X(.,t)$ and $X(.,t)^{-1}$ are piecewise $C^1$. In the context of mesh adaptation the map $X(x,t)$ represents the spatial position at time $t$ of the mesh point labeled by $x$. Define $\varphi(x,t) = \phi(X(x,t),t)$. Then the partial derivatives of $\phi$ are $\phi_X(X(x,t),t)=\varphi_x/X_x$ and $\phi_t(X(x,t),t)=\varphi_t - \varphi_x X_t/X_x$. Plugging these equations in (\ref{eq:action}) we get

\begin{equation}
\label{eq:action2}
S[\phi]=\int_0^{T_{max}} \int_0^{X_{max}} \mathcal{L} \Big(\varphi,\frac{\varphi_x}{X_x},\varphi_t-\frac{\varphi_x X_t}{X_x}\Big) X_x\,dx\,dt =: \tilde{S}[\varphi], \tilde{S}[\varphi,X]
\end{equation}

\noindent
where the last equality defines two modified, or \textquoteleft reparametrized',  action functionals. For the first one,  $\tilde S$ is considered as a functional of $\varphi$ only, whereas in the second one we also treat it as a functional of $X$. This leads to two different approaches to mesh adaptation, which we dub the \emph{control-theoretic} strategy and the \emph{Lagrange multiplier} strategy, respectively. The \textquoteleft reparametrized' field theories defined by $\tilde{S}[\varphi]$ and $\tilde{S}[\varphi,X]$ are both intrinsically covariant; however, it is convenient for computational purposes to work with a space-time split and formulate the field dynamics as an initial value problem. 

\subsubsection*{Outline}
This paper is organized as follows. In Section~\ref{sec:approach1} and Section~\ref{sec:approach2} we take the view of infinite dimensional manifolds of fields as configuration spaces, and develop the control-theoretic and Lagrange multiplier strategies in that setting. It allows us to discretize our system in space first and consider time discretization later on. It is clear from our exposition that the resulting integrators are variational. In Section~\ref{sec:multisymplecticity} we show how similar integrators can be constructed using the covariant formalism of multisymplectic field theory. We also show how the integrators from the previous sections can be interpreted as multisymplectic. In Section~\ref{sec:numerics} we apply our integrators to the Sine-Gordon equation and we present our numerical results. We summarize our work in Section~\ref{sec:summary} and discuss several directions in which it can be extended. 

\section{Control-theoretic approach to $r$-adaptation}
\label{sec:approach1}

At first glance, it appears that the simplest and most straightforward way to construct an $r$-adaptive variational integrator would be to discretize the physical system in a similar manner to the general approach to variational integration, i.e. discretize the underlying variational principle, and then derive the mesh equations and couple them to the physical equations in a way typical of the existing $r$-adaptive algorithms. We explore this idea in this section and show that it indeed leads to space adaptive integrators that are variational in nature. However, we also show that those integrators do not exhibit the behavior expected of geometric integrators, such as good energy conservation. We will refer to this strategy as \emph{control-theoretic}, since in this description the field $\varphi$ represents the physical state of the system, while $X$ can be interpreted as a control variable and the mesh equations as feedback (see, e.g., \cite{Nijmeijer}).

\subsection{Reparametrized Lagrangian}
\label{subsec:reparametrized lagrangian}

For the moment let us assume that $X(x,t)$ is a known function. We denote by $\xi(X,t)$ the function such that $\xi(.,t)=X(.,t)^{-1}$, that is $\xi(X(x,t),t)=x$ \footnote{We allow a little abuse of notation here: $X$ denotes both the argument of $\xi$ and the change of variables $X(x,t)$. If we wanted to be more precise, we would write $X = h(x,t)$.}. We thus have $\tilde S[\varphi] = S[\varphi(\xi(X,t),t)]$.

\begin{prop}
\label{Thm: App1 extremization equivalence}
Extremizing $S[\phi]$ with respect to $\phi$ is equivalent to extremizing $\tilde S[\varphi]$ with respect to $\varphi$.
\end{prop}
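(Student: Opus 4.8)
The plan is to exploit the fact that the passage from $\phi$ to $\varphi$ is nothing more than an invertible change of variables, so that $S$ and $\tilde S$ are the same functional expressed in two coordinate systems on the configuration space. Concretely, for the fixed admissible mesh map $X(\cdot,t)$ I would introduce the operator $T$ sending a field $\phi$ to the field $\varphi = T\phi$ defined by $(T\phi)(x,t) = \phi(X(x,t),t)$. Because $X(\cdot,t)$ is a homeomorphism with piecewise-$\mathcal{C}^1$ inverse $\xi(\cdot,t)$ for every $t$, the operator $T$ is a linear bijection between the two spaces of admissible fields, with explicit inverse $(T^{-1}\varphi)(X,t) = \varphi(\xi(X,t),t)$. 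The first step is therefore to record that $T$ is a linear isomorphism and that, by the very construction of $\tilde S$ in (\ref{eq:action2}) through the substitution rule for integrals, one has the identity $\tilde S[\varphi] = S[T^{-1}\varphi]$, i.e.\ $\tilde S = S\circ T^{-1}$; in particular the two functionals take equal values on fields related by $\varphi = T\phi$.

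With this identity in hand, I would pass to first variations. Since $T^{-1}$ is linear, its Gateaux derivative is $T^{-1}$ itself, and the chain rule gives $D\tilde S[\varphi]\cdot\delta\varphi = DS[T^{-1}\varphi]\cdot(T^{-1}\delta\varphi)$ for every variation $\delta\varphi$. Thus a variation $\delta\varphi$ of $\varphi$ corresponds to the variation $\delta\phi = T^{-1}\delta\varphi$ of $\phi$, and conversely. The equivalence of the two extremization problems then reduces to the observation that $T^{-1}$ maps the space of admissible variations of $\varphi$ bijectively onto that of $\phi$: the functional $D\tilde S[\varphi]$ vanishes on all $\delta\varphi$ precisely when $DS[\phi]$ vanishes on all $\delta\phi = T^{-1}\delta\varphi$, which by surjectivity of $T^{-1}$ is exactly the statement $DS[\phi]=0$.

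The one point that requires care --- and which I expect to be the main obstacle --- is verifying that $T$ carries admissible variations to admissible variations. Since the mesh map fixes the spatial endpoints, $X(0,t)=0$ and $X(X_{max},t)=X_{max}$, the constraints (\ref{eq:bndcond}) on $\phi$ translate into the identical constraints $\varphi(0,t)=\phi_L$ and $\varphi(X_{max},t)=\phi_R$ on $\varphi$, and the corresponding variations vanish at the spatial boundary for both fields simultaneously; because $T$ acts only through a spatial relabeling at each fixed $t$, any conditions imposed at the temporal endpoints transfer identically as well. I would also confirm that the limited regularity of $X(\cdot,t)$ is enough to justify the change of variables and the differentiation under the integral sign, checking that the derivatives produced by $\phi_X = \varphi_x/X_x$ and $\phi_t = \varphi_t - \varphi_x X_t/X_x$ remain integrable against $\mathcal{L}$; this is handled by applying the standard substitution theorem piecewise and needs no genuinely new idea. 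Once the bijection of admissible variation spaces is established, the correspondence of critical points is immediate.
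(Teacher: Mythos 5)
Your proposal is correct and follows essentially the same route as the paper: the identity $\tilde S = S\circ T^{-1}$ together with the chain rule for the (linear) composition operator is exactly the paper's relation $\delta \tilde S[\varphi]\cdot\delta\varphi = \delta S[\varphi(\xi(X,t),t)]\cdot\delta\varphi(\xi(X,t),t)$, and both arguments conclude by noting that the change of variables puts admissible variations of $\phi$ and $\varphi$ in bijection. Your additional remarks on boundary conditions and regularity are sensible but do not change the substance of the argument.
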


\begin{proof}
The variational derivatives of $S$ and $\tilde S$ are related by the formula

\begin{align}
\label{eq:approach 1 - variational derivatives}
&\delta \tilde S[\varphi] \cdot \delta \varphi(x,t) = \delta S[\varphi(\xi(X,t),t)] \cdot \delta \varphi(\xi(X,t),t).
\end{align} 

\noindent
Suppose $\phi(X,t)$ extremizes $S[\phi]$, i.e. $\delta S[\phi]\cdot\delta \phi=0$ for all variations $\delta \phi$. Given the function $X(x,t)$, define $\varphi(x,t)=\phi(X(x,t),t)$. Then by the formula above we have $\delta \tilde S[\varphi]=0$, so $\varphi$ extremizes $\tilde S$. Conversely, suppose $\varphi(x,t)$ extremizes $\tilde S$, that is $\delta \tilde S[\varphi] \cdot \delta \varphi =0$ for all variations $\delta \varphi$. Since we assume $X(.,t)$ is a homeomorphism, we can define $\phi(X,t)=\varphi(\xi(X,t),t)$. Note that an arbitrary variation $\delta \phi(X,t)$ induces the variation $\delta \varphi(x,t) = \delta \phi(X(x,t),t)$. Then we have $\delta S[\phi] \cdot \delta \phi = \delta \tilde S[\varphi] \cdot \delta \varphi = 0$ for all variations $\delta \phi$, so $\phi(X,t)$ extremizes $S[\phi]$.\\
\end{proof}

The corresponding instantaneous Lagrangian $\tilde L : Q \times W \times \mathbb{R} \longrightarrow \mathbb{R}$ is

\begin{equation}
\label{eq:Lagrangian}
\tilde{L}[\varphi,\varphi_t,t]= \int_0^{X_{max}} \tilde {\mathcal{L}}(\varphi,\varphi_x,\varphi_t,t)\,dx
\end{equation}

\noindent
with the Lagrangian density

\begin{equation}
\label{eq:LagrangianDensity}
\tilde{\mathcal{L}}(\varphi,\varphi_x,\varphi_t,x,t) = \mathcal{L} \Big(\varphi,\frac{\varphi_x}{X_x},\varphi_t-\frac{\varphi_x X_t}{X_x}\Big) X_x.
\end{equation}

\noindent
The function spaces $Q$ and $W$ must be chosen appropriately for the problem at hand, so that (\ref{eq:Lagrangian}) makes sense. For instance, for a free field we will have $Q=H^1([0,X_{max}])$ and $W=L^2([0,X_{max}])$. Since $X(x,t)$ is a function of $t$, we are looking at a time-dependent system. Even though the energy associated with (\ref{eq:Lagrangian}) is not conserved, the energy of the original theory associated with (\ref{eq:action})

\begin{align}
\label{eq:Energy1}
E &= \int_0^{X_{max}} \Big( \phi_t \frac{\partial \mathcal{L}}{\partial \phi_t}(\phi,\phi_X,\phi_t) - \mathcal{L}(\phi,\phi_X,\phi_t)\Big) \, dX \\
\label{eq:Energy2}
  &= \int_0^{X_{max}} \Big[ \Big( \varphi_t-\frac{\varphi_x X_t}{X_x}\Big) \frac{\partial \mathcal{L}}{\partial \phi_t}\Big(\varphi,\frac{\varphi_x}{X_x},\varphi_t-\frac{\varphi_x X_t}{X_x} \Big) - \mathcal{L}\Big(\varphi,\frac{\varphi_x}{X_x},\varphi_t-\frac{\varphi_x X_t}{X_x}\Big)\Big] X_x\, dx
\end{align}

\noindent
is conserved. To see this, note that if $\phi(X,t)$ extremizes $S[\phi]$ then $dE/dt=0$ (computed from (\ref{eq:Energy1})). Trivially, this means that $dE/dt=0$ when formula (\ref{eq:Energy2}) is invoked as well. Moreover, as we have noted earlier, $\phi(X,t)$ extremizes $S[\phi]$ iff $\varphi(x,t)$ extremizes $\tilde S[\varphi]$. This means that the energy (\ref{eq:Energy2}) is constant on solutions of the reparametrized theory.

\subsection{Spatial Finite Element discretization}
\label{subsec:FEM}

We begin with a discretization of the spatial dimension only, thus turning the original infinite-dimensional problem into a time-continuous finite-dimensional Lagrangian system. Let $\Delta x = X_{max}/(N+1)$ and define the reference uniform mesh $x_i=i \cdot \Delta x$ for $i=0,1,...,N+1$, and the corresponding piecewise linear finite elements

\begin{equation}
\label{eq:basis functions}
\eta_i(x)=\left\{
\begin{array}{cc}
\frac{x-x_{i-1}}{\Delta x}, & \mbox{if } x_{i-1} \leq x \leq x_i,\\
-\frac{x-x_{i+1}}{\Delta x}, & \mbox{if } x_i \leq x \leq x_{i+1},\\
0, & \mbox{otherwise.}\\
\end{array}
\right.
\end{equation}

\noindent
We now restrict $X(x,t)$ to be of the form

\begin{equation}
\label{eq:X FEM}
X(x,t) = \sum_{i=0}^{N+1} X_i(t) \eta_i(x)
\end{equation}

\noindent
with $X_0(t)=0$, $X_{N+1}(t)=X_{max}$ and arbitrary $X_i(t)$, $i=1,2,...,N$ as long as $X(.,t)$ is a homeomorphism for all $t$.  In our context of numerical computations, the functions $X_i(t)$ represent the current position of the $i^{\text{th}}$ mesh point. Define the finite element spaces

\begin{equation}
\label{eq:QNWN}
Q_N=W_N=\text{span}(\eta_0,...,\eta_{N+1})
\end{equation}

\noindent
and assume that $Q_N \subset Q$, $W_N \subset W$. Let us denote a generic element of $Q_N$ by $\varphi$ and a generic element of $W_N$ by $\dot \varphi$. We have the decompositions

\begin{equation}
\label{eq:phi FEM}
\varphi(x) = \sum_{i=0}^{N+1} y_i \eta_i(x), \quad\quad\quad \dot \varphi(x) = \sum_{i=0}^{N+1} \dot y_i \eta_i(x).
\end{equation}

\noindent
The numbers $(y_i,\dot y_i)$ thus form natural (global) coordinates on $Q_N \times W_N$. We can now approximate the dynamics of system (\ref{eq:Lagrangian}) in the finite-dimensional space $Q_N \times W_N$. Let us consider the restriction $\tilde L_N= \tilde L|_{Q_N \times W_N \times \mathbb{R}}$ of the Lagrangian (\ref{eq:Lagrangian}) to $Q_N \times W_N \times \mathbb{R}$. In the chosen coordinates we have

\begin{equation}
\label{eq:LN FEM}
\tilde L_N(y_0,...,y_{N+1},\dot y_0,...,\dot y_{N+1}, t) = \tilde L \Big[\sum_{i=0}^{N+1} y_i \eta_i(x),\sum_{i=0}^{N+1} \dot y_i \eta_i(x),t\Big].
\end{equation}

\noindent
Note that, given the boundary conditions (\ref{eq:bndcond}), $y_0$, $y_{N+1}$, $\dot y_0$, and $\dot y_{N+1}$ are fixed. We will thus no longer write them as arguments of $\tilde L_N$.

The advantage of using a finite element discretization lies in the fact that the symplectic structure induced on $Q_N \times W_N$ by $\tilde L_N$ is strictly a restriction (i.e., a pull-back) of the (pre-)symplectic structure\footnote{In most cases the symplectic structure of $(Q \times W, \tilde L)$ is only weakly-nondegenerate; see \cite{GotayPhDThesis}} on $Q \times W$. This establishes a direct link between symplectic integration of the finite-dimensional mechanical system $(Q_N \times W_N, \tilde L_N)$ and the infinite-dimensional field theory $(Q \times W, \tilde L)$

\subsection{DAE formulation and time integration}
\label{sec:timeintegration}

We now consider time integration of the Lagrangian system $(Q_N \times W_N, \tilde L_N)$. If the functions $X_i(t)$ are known, then one can perform variational integration in the standard way, that is, define the discrete Lagrangian $\tilde L_d:\mathbb{R} \times Q_N \times \mathbb{R} \times Q_N \rightarrow \mathbb{R}$ and solve the corresponding discrete Euler-Lagrange equations (see \cite{MarsdenWestVarInt}, \cite{HLWGeometric}). Let $t_n=n\cdot \Delta t$ for $n=0,1,2,\ldots$ be an increasing sequence of times and $\{y^0,y^1,\ldots\}$ the corresponding discrete path of the system in $Q_N$. The discrete Lagrangian $L_d$ is an approximation to the exact discrete Lagrangian $L_d^E$, such that

\begin{equation}
\label{eq:DiscreteLagrangianDefinition}
\tilde L_d(t_n,y^{n},t_{n+1},y^{n+1}) \approx \tilde L_d^E(t_n,y^{n},t_{n+1},y^{n+1}) \equiv \int_{t_n}^{t_{n+1}}\tilde L_N(y(t),\dot y(t),t) \, dt,
\end{equation}

\noindent
where $y^n=(y_1^n,...,y_N^n)$, $y^{n+1}=(y_1^{n+1},...,y_N^{n+1})$ and $y(t)$ is the solution of the Euler-Lagrange equations corresponding to $\tilde L_N$ with the boundary values $y(t_n)=y^n$, $y(t_{n+1})=y^{n+1}$. Depending on the quadrature we use to approximate the integral in (\ref{eq:DiscreteLagrangianDefinition}), we obtain different types of variational integrators. As will be discussed below, in $r$-adaptation one has to deal with stiff differential equations or differential-algebraic equations, therefore higher order implicit integration in time is advisable (see \cite{PetzoldDAE}, \cite{HWODE2}). We will employ variational partitioned Runge-Kutta methods. An $s$-stage Runge Kutta method is constructed by choosing

\begin{equation}
\label{eq:DiscreteLagrangianRK}
\tilde L_d(t_n,y^{n},t_{n+1},y^{n+1}) = (t_{n+1}-t_n) \sum_{i=1}^s b_i \tilde L_N(Y_i,\dot Y_i,t_i),
\end{equation}

\noindent
where $t_i=t_n+c_i(t_{n+1}-t_n)$, the right-hand side is extremized under the constraint $y^{n+1}= y^n+(t_{n+1}-t_n) \sum_{i=1}^s b_i \dot Y_i$, and the internal stage variables $Y_i$, $\dot Y_i$ are related by $Y_i= y^n+(t_{n+1}-t_n) \sum_{j=1}^s a_{ij} \dot Y_j$. It can be shown that the variational integrator with the discrete Lagrangian (\ref{eq:DiscreteLagrangianRK}) is equivalent to an appropriately chosen symplectic partitioned Runge-Kutta method applied to the Hamiltonian system corresponding to $\tilde L_N$ (see \cite{MarsdenWestVarInt}, \cite{HLWGeometric}). With this in mind we turn our semi-discrete Lagrangian system $(Q_N \times W_N, \tilde L_N)$ into the Hamiltonian system $(Q_N \times W_N^*, \tilde H_N)$ via the standard Legendre transform

\begin{equation}
\label{eq:Hamiltonian}
\tilde H_N(y_1,...,y_N,p_1,...,p_N;X_1,...,X_N,\dot X_1,...,\dot X_N) = \sum_{i=1}^N p_i \dot y_i - \tilde L_N(y_1,...,y_{N},\dot y_1,...,\dot y_{N}, t), 
\end{equation}

\noindent
where $p_i = \partial \tilde L_N / \partial \dot y_i$ and we explicitly stated the dependence on the positions $X_i$ and velocities $\dot X_i$ of the mesh points. The Hamiltonian equations take the form\footnote{It is computationally more convenient to directly integrate the implicit Hamiltonian system $p_i = \partial \tilde L_N / \partial \dot y_i$, $\dot p_i = \partial \tilde L_N / \partial y_i$, but as long as system (\ref{eq:action}) is at least weakly-nondegenerate there is no theoretical issue with passing to the Hamiltonian formulation, which we do for the clarity of our exposition.}

\begin{align}
\label{eq:Hamiltonian ODE}
&\dot y_i = \frac{\partial \tilde H_N}{\partial p_i}\Big(y,p; X(t),\dot X(t)\Big),\\
&\dot p_i = -\frac{\partial \tilde H_N}{\partial y_i}\Big(y,p; X(t),\dot X(t)\Big). \nonumber
\end{align}

\noindent
Suppose that the functions $X_i(t)$ are $C^1$ and $H_N$ is smooth as a function of the $y_i$'s, $p_i$'s, $X_i$'s and $\dot X_i$'s (note that these assumptions are used for simplicity, and can be easily relaxed if necessary, depending on the regularity of the considered Lagrangian system). Then the assumptions of Picard's theorem are satisfied and there exists a unique $C^1$ flow $F_{t_0,t} = (F^y_{t_0,t},F^p_{t_0,t}):Q_N \times W_N^* \rightarrow Q_N \times W_N^*$ for (\ref{eq:Hamiltonian ODE}). This flow is symplectic.

However, in practice we do not know the $X_i$'s and we in fact would like to be able to adjust them \textquoteleft on the fly', based on the current behavior of the system. We will do that by introducing additional constraint functions $g_i(y_1,...,y_N,X_1,...,X_N)$ and demanding that the conditions $g_i=0$ be satisfied at all times\footnote{In the context of Control Theory the constraints $g_i=0$ are called \emph{strict static state feedback}. See \cite{Nijmeijer}.}. The choice of these functions will be discussed in Section~\ref{subsec:MMPDEs}. This leads to the following differential-algebraic system of index 1 (see \cite{PetzoldDAE}, \cite{HWODE2}, \cite{HLLectureNotes})

\begin{align}
\label{eq:Hamiltonian DAE}
&\dot y_i = \frac{\partial \tilde H_N}{\partial p_i}\Big(y,p; X,\dot X\Big),\\
&\dot p_i = -\frac{\partial \tilde H_N}{\partial y_i}\Big(y,p; X,\dot X\Big), \nonumber\\
				&0= g_i(y,X), \nonumber\\
				&y_i(t_0)=y_i^{(0)}, \nonumber \\
				&p_i(t_0)=p_i^{(0)} \nonumber
\end{align}

\noindent
for $i=1,...,N$. Note that an initial condition for $X$ is fixed by the constraints. This system is of index 1 because one has to differentiate the algebraic equations with respect to time once in order to reduce it to an implicit ODE system. In fact, the implicit system will take the form

\begin{align}
\label{eq:Hamiltonian IODE}
&\dot y = \frac{\partial \tilde H_N}{\partial p}\Big(y,p; X,\dot X\Big),\\
&\dot p = -\frac{\partial \tilde H_N}{\partial y}\Big(y,p; X,\dot X\Big), \nonumber\\
				&0= \frac{\partial g}{\partial y}(y,X) \dot y + \frac{\partial g}{\partial X}(y,X) \dot X, \nonumber\\
				&y(t_0)=y^{(0)}, \nonumber \\
				&p(t_0)=p^{(0)}, \nonumber \\
				&X(t_0)=X^{(0)}, \nonumber
\end{align}

\noindent
where $X^{(0)}$ is a vector of arbitrary initial condition for the $X_i$'s. Suppose again that $H_N$ is a smooth function of $y$, $p$, $X$ and $\dot X$. Futhermore, suppose that $g$ is a $C^1$ function of $y$, $X$, and $\frac{\partial g}{\partial X}-\frac{\partial g}{\partial y} \frac{\partial^2 H_N}{\partial \dot X \partial p}$ is invertible with its inverse bounded in a neighborhood of the exact solution.\footnote{Again, these assumptions can be relaxed if necessary.} Then, by the Implicit Function Theorem equations (\ref{eq:Hamiltonian IODE}) can be solved explicitly for $\dot y$, $\dot p$, $\dot X$ and the resulting explicit ODE system will satisfy the assumptions of Picard's theorem. Let $(y(t),p(t),X(t))$ be the unique $C^1$ solution to this ODE system (and hence to (\ref{eq:Hamiltonian IODE})). We have the trivial result\\

\begin{prop}
\label{Thm: IODE vs DAE}
If $g(y^{(0)},X^{(0)})=0$, then $(y(t),p(t),X(t))$ is a solution to (\ref{eq:Hamiltonian DAE}).\footnote{Note that there might be other solutions, as for any given $y^{(0)}$ there might be more than one $X^{(0)}$ that solves the constraint equations.}
\end{prop}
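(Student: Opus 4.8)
The proposition asserts that the solution of the index-1 DAE system and the solution of the differentiated (implicit ODE) system coincide, provided the initial data satisfies the constraint. Let me sketch the proof.

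The plan is to show that the unique $C^1$ solution $(y(t),p(t),X(t))$ of the implicit ODE system (\ref{eq:Hamiltonian IODE}) automatically satisfies the algebraic constraint $g(y(t),X(t))=0$ for all $t$, which is precisely the additional equation distinguishing (\ref{eq:Hamiltonian DAE}) from (\ref{eq:Hamiltonian IODE}). Since the two systems share identical Hamiltonian equations for $\dot y$ and $\dot p$ and the same initial data for $y$ and $p$, establishing the constraint is the only thing that remains.

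First I would define the scalar (or vector-valued) function $G(t) := g(y(t),X(t))$ along the ODE solution. By the chain rule and the smoothness of $g$, $G$ is $C^1$ and
\begin{equation}
\dot G(t) = \frac{\partial g}{\partial y}(y(t),X(t))\,\dot y(t) + \frac{\partial g}{\partial X}(y(t),X(t))\,\dot X(t).
\end{equation}
The third equation of (\ref{eq:Hamiltonian IODE}) is exactly the statement that the right-hand side vanishes, so $\dot G(t)=0$ for all $t$ in the interval of existence. Hence $G$ is constant. Next I would invoke the hypothesis $g(y^{(0)},X^{(0)})=0$, which gives $G(t_0)=0$, and therefore $G(t)\equiv 0$. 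This shows $0=g(y(t),X(t))$ holds identically, which is the constraint appearing in (\ref{eq:Hamiltonian DAE}).

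Having verified the algebraic equation, I would conclude by observing that $(y(t),p(t),X(t))$ satisfies every equation of (\ref{eq:Hamiltonian DAE}): the two differential equations hold because they are literally the first two lines of (\ref{eq:Hamiltonian IODE}), the constraint holds by the argument just given, and the initial conditions $y(t_0)=y^{(0)}$, $p(t_0)=p^{(0)}$ are inherited directly. There is no real obstacle here, which is why the authors call this a \emph{trivial result}; the only subtlety worth a remark is that the implication runs in one direction only, since the constraint $g(y^{(0)},X^{(0)})=0$ need not determine $X^{(0)}$ uniquely, so the DAE may admit solutions beyond the one produced by the chosen initial mesh configuration $X^{(0)}$.
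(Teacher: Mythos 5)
Your proof is correct and is precisely the argument the paper leaves implicit by labeling the result ``trivial'': the first two equations and the initial data for $y,p$ are shared by both systems, and the third equation of (\ref{eq:Hamiltonian IODE}) says exactly that $\frac{d}{dt}\,g(y(t),X(t))=0$, so the hypothesis $g(y^{(0)},X^{(0)})=0$ propagates the constraint to all $t$. Nothing is missing, and your closing remark about non-uniqueness of $X^{(0)}$ matches the paper's own footnote.
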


In practice we would like to integrate system (\ref{eq:Hamiltonian DAE}). A question arises in what sense is this system symplectic and in what sense a numerical integration scheme for this system can be regarded as variational. Let us address these issues.

\begin{prop}
\label{Thm: DAE vs Hamiltonian ODE}
Let $(y(t),p(t),X(t))$ be a solution to (\ref{eq:Hamiltonian DAE}) and use this $X(t)$ to form the Hamiltonian system (\ref{eq:Hamiltonian ODE}). Then we have that

\begin{equation}
y(t) = F^y_{t_0,t}(y^{(0)},p^{(0)}), \quad\quad p(t) = F^p_{t_0,t}(y^{(0)},p^{(0)}) \nonumber
\end{equation}

\noindent
and
\begin{equation}
g\Big(F^y_{t_0,t}(y^{(0)},p^{(0)}),X(t)\Big)=0,\nonumber
\end{equation}

\noindent
where $F_{t_0,t}(\hat y,\hat p)$ is the symplectic flow for (\ref{eq:Hamiltonian ODE}).
\end{prop}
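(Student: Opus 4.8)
The plan is to observe that this proposition is essentially a consistency statement: the DAE solution, once its mesh trajectory $X(t)$ is extracted and frozen, coincides with the flow of the time-dependent Hamiltonian system built from that very same $X(t)$. So I would approach it as a uniqueness argument for ODE initial value problems rather than anything genuinely new.

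First I would note that by hypothesis $(y(t),p(t),X(t))$ solves (\ref{eq:Hamiltonian DAE}), and in particular the first two lines say
\begin{align}
\dot y_i(t) &= \frac{\partial \tilde H_N}{\partial p_i}\bigl(y(t),p(t);X(t),\dot X(t)\bigr), \nonumber\\
\dot p_i(t) &= -\frac{\partial \tilde H_N}{\partial y_i}\bigl(y(t),p(t);X(t),\dot X(t)\bigr). \nonumber
\end{align}
Now I would feed this particular $X(t)$ into the definition of the non-autonomous Hamiltonian system (\ref{eq:Hamiltonian ODE}). Because $X(t)$ is $C^1$ (it is the $C^1$ solution component guaranteed in the paragraph preceding Proposition~\ref{Thm: IODE vs DAE}) and $\tilde H_N$ is smooth, the right-hand sides of (\ref{eq:Hamiltonian ODE}) satisfy the hypotheses of Picard's theorem, so there is a unique $C^1$ flow $F_{t_0,t}$. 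The crucial point is that the pair $(y(t),p(t))$ coming from the DAE satisfies \emph{exactly} these same equations with the same $X(t)$, and it has initial data $(y^{(0)},p^{(0)})$. By uniqueness of solutions to the initial value problem (\ref{eq:Hamiltonian ODE}), I conclude $y(t)=F^y_{t_0,t}(y^{(0)},p^{(0)})$ and $p(t)=F^p_{t_0,t}(y^{(0)},p^{(0)})$, which is the first assertion.

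For the second assertion I would simply substitute this identification back into the algebraic constraint. The DAE solution satisfies $g_i(y(t),X(t))=0$ for all $t$ by the third line of (\ref{eq:Hamiltonian DAE}); replacing $y(t)$ by its flow expression $F^y_{t_0,t}(y^{(0)},p^{(0)})$ yields $g\bigl(F^y_{t_0,t}(y^{(0)},p^{(0)}),X(t)\bigr)=0$ directly. So this part is immediate once the first part is in hand.

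I do not expect a genuine obstacle here, which is consistent with the proposition being a bridge between the DAE picture and the frozen-mesh symplectic flow rather than a deep result. The only point deserving care is the logical direction: the flow $F_{t_0,t}$ is \emph{defined in terms of} the $X(t)$ that the DAE itself produces, so one must be explicit that $X(t)$ is fixed first from the DAE solution and only afterward used to construct (\ref{eq:Hamiltonian ODE}) — there is no circularity because nothing about $F$ feeds back into the determination of $X(t)$. The mild technical hypothesis to keep visible is that this extracted $X(t)$ is $C^1$, since that is what licenses the application of Picard uniqueness; this is exactly the regularity established when (\ref{eq:Hamiltonian IODE}) was solved via the Implicit Function Theorem, so I would cite that paragraph rather than reprove it.
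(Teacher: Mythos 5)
Your proposal is correct and follows essentially the same route as the paper's proof: identify the first two DAE equations with the frozen-$X(t)$ Hamiltonian system, invoke uniqueness of the flow to get $y(t)=F^y_{t_0,t}(y^{(0)},p^{(0)})$ and $p(t)=F^p_{t_0,t}(y^{(0)},p^{(0)})$, then substitute into the constraint. Your added remarks on the $C^1$ regularity of $X(t)$ and the non-circularity of freezing $X(t)$ before defining $F_{t_0,t}$ are accurate but do not change the argument.
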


\begin{proof}
Note that the first two equations of (\ref{eq:Hamiltonian DAE}) are the same as (\ref{eq:Hamiltonian ODE}), therefore $(y(t),p(t))$ trivially satisfies (\ref{eq:Hamiltonian ODE}) with the initial conditions $y(t_0)=y^{(0)}$ and $p(t_0)=p^{(0)}$. Since the flow map $F_{t_0,t}$ is unique, we must have $y(t) = F^y_{t_0,t}(y^{(0)},p^{(0)})$ and $p(t) = F^p_{t_0,t}(y^{(0)},p^{(0)})$. Then we also must have that $g\Big(F^y_{t_0,t}(y^{(0)},p^{(0)}),X(t)\Big)=0$, that is,  the constraints are satisfied along one particular integral curve of (\ref{eq:Hamiltonian ODE}) that passes through $(y^{(0)},p^{(0)})$ at $t_0$.\\
\end{proof}

Suppose we now would like to find a numerical approximation of the solution to (\ref{eq:Hamiltonian ODE}) using an $s$-stage partitioned Runge-Kutta method with coefficients $a_{ij}$, $b_i$, $\bar a_{ij}$, $\bar b_i$, $c_i$ (\cite{HWODE1}, \cite{HLWGeometric}). The numerical scheme will take the form

\begin{align}
\label{eq:PRK for ODE}
\dot Y^i &= \frac{\partial \tilde H_N}{\partial p}\Big(Y^i,P^i; X(t_n+c_i \Delta t),\dot X(t_n+c_i \Delta t)\Big),\\
\dot P^i &= -\frac{\partial \tilde H_N}{\partial y}\Big(Y^i,P^i; X(t_n+c_i \Delta t),\dot X(t_n+c_i \Delta t)\Big), \nonumber\\
Y^i &= y^n + \Delta t \sum_{j=1}^s a_{ij} \dot Y^j, \nonumber \\
P^i &= p^n + \Delta t \sum_{j=1}^s \bar a_{ij} \dot P^j, \nonumber \\
y^{n+1} &= y^n + \Delta t \sum_{i=1}^s b_i \dot Y^i, \nonumber \\
p^{n+1} &= p^n + \Delta t \sum_{i=1}^s \bar b_i \dot P^i,  \nonumber
\end{align}

\noindent
where $Y^i$, $\dot Y^i$, $P^i$, $\dot P^i$ are the internal stages and $\Delta t$ is the integration timestep. Let us apply the same partitioned Runge-Kutta method to (\ref{eq:Hamiltonian DAE}). In order to compute the internal stages $Q^i$, $\dot Q^i$ of the $X$ variable we use the state-space form approach, that is, we demand that the constraints and their time derivatives be satisfied (see \cite{HWODE2}). The new step value $X^{n+1}$ is computed by solving the constraints as well. The resulting numerical scheme is thus

\begin{align}
\label{eq:PRK for DAE}
\dot Y^i &= \frac{\partial \tilde H_N}{\partial p}\Big(Y^i,P^i; Q^i,\dot Q^i\Big),\\
\dot P^i &= -\frac{\partial \tilde H_N}{\partial y}\Big(Y^i,P^i; Q^i,\dot Q^i\Big), \nonumber\\
Y^i &= y^n + \Delta t \sum_{j=1}^s a_{ij} \dot Y^j, \nonumber \\
P^i &= p^n + \Delta t \sum_{j=1}^s \bar a_{ij} \dot P^j, \nonumber \\
0&= g(Y^i,Q^i), \nonumber \\
0&= \frac{\partial g}{\partial y}(Y^i,Q^i)\, \dot Y^i + \frac{\partial g}{\partial X}(Y^i,Q^i) \,\dot Q^i, \nonumber\\
y^{n+1} &= y^n + \Delta t \sum_{i=1}^s b_i \dot Y^i, \nonumber \\
p^{n+1} &= p^n + \Delta t \sum_{i=1}^s \bar b_i \dot P^i,  \nonumber\\
0&= g(y^{n+1},X^{n+1}). \nonumber
\end{align}

\noindent
We have the following trivial observation.

\begin{prop}
\label{Thm: PRK for DAE and ODE}
If $X(t)$ is defined to be a $C^1$ interpolation of the internal stages $Q^i$, $\dot Q^i$ at times $t_n+c_i \Delta t$ (that is, if the values $X(t_n+c_i \Delta t)$, $\dot X(t_n+c_i \Delta t)$ coincide with $Q^i$, $\dot Q^i$), then the schemes (\ref{eq:PRK for ODE}) and (\ref{eq:PRK for DAE}) give the same numerical approximations $y^n$, $p^n$ to the exact solution $y(t)$, $p(t)$.
\end{prop}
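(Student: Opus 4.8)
The plan is to argue by direct substitution together with a short induction on the timestep index $n$, exploiting the uniqueness of the partitioned Runge--Kutta internal stages. The only difference between the two schemes lies in \emph{how} the mesh data entering the Hamiltonian vector field is supplied: in (\ref{eq:PRK for ODE}) it is read off from a prescribed function $X(t)$ at the stage times $t_n+c_i\Delta t$, whereas in (\ref{eq:PRK for DAE}) it is solved for simultaneously as the additional internal stages $Q^i$, $\dot Q^i$ through the constraint equations. The hypothesis that $X(t)$ interpolates these stages is precisely what forces the two sources of mesh data to coincide.

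First I would fix $n$ and assume inductively that the incoming step values $y^n$, $p^n$ agree for both schemes (the base case $n=0$ holds since both are initialized with $y^{(0)}$, $p^{(0)}$). I would then solve (\ref{eq:PRK for DAE}) for its internal stages $Y^i$, $\dot Y^i$, $P^i$, $\dot P^i$, $Q^i$, $\dot Q^i$, and \emph{use} these $Q^i$, $\dot Q^i$ to define the interpolant $X(t)$ on $[t_n,t_{n+1}]$ exactly as in the statement, so that by construction $X(t_n+c_i\Delta t)=Q^i$ and $\dot X(t_n+c_i\Delta t)=\dot Q^i$.

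The key step is then a term-by-term comparison. Substituting these interpolation identities into the first two lines of (\ref{eq:PRK for ODE}) turns them into
\begin{align*}
\dot Y^i &= \frac{\partial \tilde H_N}{\partial p}\Big(Y^i,P^i; Q^i,\dot Q^i\Big), \\
\dot P^i &= -\frac{\partial \tilde H_N}{\partial y}\Big(Y^i,P^i; Q^i,\dot Q^i\Big),
\end{align*}
which are precisely the first two lines of (\ref{eq:PRK for DAE}); the remaining stage relations $Y^i=y^n+\Delta t\sum_j a_{ij}\dot Y^j$, $P^i=p^n+\Delta t\sum_j \bar a_{ij}\dot P^j$ and the update formulas for $y^{n+1}$, $p^{n+1}$ are already literally identical in the two schemes. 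Hence the tuple $(Y^i,\dot Y^i,P^i,\dot P^i)$ produced by the DAE scheme satisfies \emph{all} of the defining equations of the ODE scheme (\ref{eq:PRK for ODE}) with this particular $X(t)$. Under the same nondegeneracy and small-$\Delta t$ assumptions that guarantee the Runge--Kutta stage equations have a unique solution, these stages must coincide with those of (\ref{eq:PRK for ODE}), so the two update formulas deliver the same $y^{n+1}$, $p^{n+1}$, and the induction closes.

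The argument is essentially bookkeeping, so I do not anticipate a genuine obstacle; the only point requiring care is the appeal to uniqueness of the stages, which is what lets us pass from ``the DAE stages solve the ODE scheme'' to ``the DAE stages \emph{are} the ODE stages.'' This is standard for partitioned Runge--Kutta methods applied to a system satisfying the Picard hypotheses already invoked for (\ref{eq:Hamiltonian ODE}), provided $\Delta t$ is small enough. A subtlety worth flagging explicitly is the absence of circularity: the interpolant $X(t)$ is \emph{defined} from the DAE stages, so substituting it back into the ODE scheme is legitimate and does not presuppose that the two schemes agree. Note also that the auxiliary quantity $X^{n+1}$, computed in (\ref{eq:PRK for DAE}) from $0=g(y^{n+1},X^{n+1})$, never enters the stage or update equations for $y$ and $p$, and therefore plays no role in the equality of the numerical approximations.
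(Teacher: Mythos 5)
Your argument is correct, and it is precisely the direct substitution-plus-uniqueness reasoning that the paper relies on: the paper states this result as a ``trivial observation'' and omits the proof entirely, so your write-up simply makes explicit the bookkeeping (interpolant defined from the DAE stages, coincidence of the stage equations, uniqueness of the partitioned Runge--Kutta stages for small $\Delta t$, induction on $n$) that the authors take for granted. The two points you flag --- non-circularity of defining $X(t)$ from the DAE stages, and the irrelevance of $X^{n+1}$ to the $y$, $p$ updates --- are exactly the right ones to make the claim airtight.
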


Intuitively, Proposition~\ref{Thm: PRK for DAE and ODE} states that we can apply a symplectic partitioned Runge-Kutta method to the DAE system (\ref{eq:Hamiltonian DAE}), which solves both for $X(t)$ and $(y(t),p(t))$, and the result will be the same as if we performed a symplectic integration of the Hamiltonian system (\ref{eq:Hamiltonian ODE}) for $(y(t),p(t))$ with a \emph{known} $X(t)$.

\subsection{Moving mesh partial differential equations}
\label{subsec:MMPDEs}

The concept of equidistribution is the most popular paradigm of $r$-adaptation (see \cite{HuangRussellREVIEW}, \cite{HuangRussellBOOK}). Given a continuous mesh density function $\rho(X)$, the equidistribution principle seeks to find a mesh $0=X_0<X_1<...<X_{N+1}=X_{max}$ such that the following holds

\begin{equation}
\label{eq:EquidistributionPrinciple}
\int_0^{X_1}\rho(X)\,dX =\int_{X_1}^{X_2}\rho(X)\,dX= ...=\int_{X_N}^{X_{max}}\rho(X)\,dX,
\end{equation}

\noindent
that is, the quantity represented by the density function is equidistributed among all cells. In the continuous setting we will say that the reparametrization $X=X(x)$ equidistributes $\rho(X)$ if

\begin{equation}
\label{eq:ContinuousEquidistributionPrinciple}
\int_0^{X(x)}\rho(X)\,dX =\frac{x}{X_{max}} \sigma,
\end{equation}

\noindent
where $\sigma=\int_0^{X_{max}}\rho(X)\,dX$ is the total amount of the equidistributed quantity. Differentiate this equation with respect to $x$ to obtain

\begin{equation}
\label{eq:DifferentiatedContinuousEquidistributionPrinciple}
\rho(X(x)) \frac{\partial X}{\partial x} =\frac{1}{X_{max}} \sigma.
\end{equation}

\noindent
It is still a global condition in the sense that $\sigma$ has to be known. For computational purposes it is convenient to differentiate this relation again and consider the following partial differential equation

\begin{equation}
\label{eq:MMPDE0}
\frac{\partial}{\partial x} \Big( \rho(X(x)) \frac{\partial X}{\partial x} \Big) = 0
\end{equation}

\noindent
with the boundary conditions $X(0)=0$, $X(X_{max})=X_{max}$. The choice of the mesh density function $\rho(X)$ is typically problem-dependent and the subject of much research. A popular example is the generalized solution arclength given by

\begin{equation}
\label{eq:ArclengthDensity}
\rho = \sqrt{1+\alpha^2 \Big(\frac{\partial \phi}{\partial X}\Big)^2}=\sqrt{1+\alpha^2 \Big(\frac{\varphi_x}{X_x}\Big)^2}.
\end{equation}

\noindent
It is often used to construct meshes that can follow moving fronts with locally high gradients (\cite{HuangRussellREVIEW}, \cite{HuangRussellBOOK}). With this choice, equation (\ref{eq:MMPDE0}) is equivalent to

\begin{equation}
\label{eq:MMPDE0 - arclength}
\alpha^2 \varphi_x \varphi_{xx}+X_x X_{xx}=0,
\end{equation}

\noindent
assuming $X_x>0$, which we demand anyway. A finite difference discretization on the mesh $x_i=i\cdot \Delta x$ gives us the set of contraints

\begin{align}
\label{eq:ArclengthConstraint}
g_i(y_1,...,y_N,& X_1,...,X_N) = \nonumber \\
& \alpha^2(y_{i+1}-y_i)^2+(X_{i+1}-X_i)^2-\alpha^2(y_{i}-y_{i-1})^2-(X_{i}-X_{i-1})^2 =0, 
\end{align}

\noindent
with the previously defined $y_i$'s and $X_i$'s. This set of constraints can be used in (\ref{eq:Hamiltonian DAE}).

\subsection{Example}
\label{eq:Example1}

To illustrate these ideas let us consider the Lagrangian density

\begin{equation}
\label{eq:ExampleDensity}
\mathcal{L}(\phi,\phi_X,\phi_t)=\frac{1}{2}\phi_t^2 - W(\phi_X).
\end{equation}

\noindent
The reparametrized Lagrangian (\ref{eq:Lagrangian}) takes the form

\begin{equation}
\label{eq:ExampleLagrangian}
\tilde L[\varphi,\varphi_t,t]=\int_0^{X_{max}} \bigg[\frac{1}{2}X_x \Big(\varphi_t-\frac{\varphi_x}{X_x}X_t\Big)^2-W\Big(\frac{\varphi_x}{X_x}\Big) X_x \bigg] \, dx.
\end{equation}

\noindent
Let $N=1$ and $\phi_L=\phi_R=0$. Then

\begin{equation}
\label{eq:ExampleDecomposition}
\varphi(x,t) = y_1(t) \eta_1(x), \quad\quad\quad X(x,t)=X_1(t) \eta_1(x)+X_{max}\,\eta_2(x).
\end{equation}

\noindent
The semi-discrete Lagrangian is

\begin{align}
\label{eq:ExampleLN}
\tilde L_N(y_1,\dot y_1, t) = &\frac{X_1(t)}{6} \bigg( \dot y_1 - \frac{y_1}{X_1(t)} \dot X_1(t)\bigg)^2 + \frac{X_{max}-X_1(t)}{6} \bigg( \dot y_1 + \frac{y_1}{X_{max}-X_1(t)} \dot X_1(t)\bigg)^2 \nonumber \\
                              &-W\bigg(\frac{y_1}{X_1(t)}\bigg) X_1(t) - W\bigg(-\frac{y_1}{X_{max}-X_1(t)}\bigg) \big(X_{max}-X_1(t)\big).
\end{align}

\noindent
The Legendre transform gives $p_1=\partial \tilde L_N / \partial \dot y_1=X_{max} \dot y_1/3$, hence the semi-discrete Hamiltonian is

\begin{align}
\label{eq:ExampleHN}
\tilde H_N(y_1,p_1;X_1, \dot X_1) = &\frac{3}{2 X_{max}} p_1^2-\frac{1}{6}\frac{X_{max} \dot X_1^2}{X_1(X_{max}-X_1)}y_1^2 \nonumber \\
															&+W\Big(\frac{y_1}{X_1}\Big) X_1 + W\Big(-\frac{y_1}{X_{max}-X_1}\Big) (X_{max}-X_1).
\end{align}

\noindent
The corresponding DAE system is

\begin{align}
\label{eq:ExampleDAE}
&\dot y_1 = \frac{3}{X_{max}}p_1,\\
&\dot p_1 = \frac{1}{3}\frac{X_{max} \dot X_1^2}{X_1(X_{max}-X_1)}y_1-W'\Big(\frac{y_1}{X_1}\Big) + W'\Big(-\frac{y_1}{X_{max}-X_1}\Big), \nonumber\\
				&0= g_1(y_1,X_1). \nonumber
\end{align}

\noindent
This system is to be solved for the unknown functions $y_1(t)$, $p_1(t)$ and $X_1(t)$. It is of index 1, because we have three unknown functions and only two differential equations --- the algebraic equation has to be differentiated once in order to obtain a missing ODE.

\subsection{Backward error analysis}
\label{sec:BEA}

The true power of symplectic integration of Hamiltonian equations is revealed through backward error analysis: it can be shown that a symplectic integrator for a Hamiltonian system with the Hamiltonian $H(q,p)$ defines the \emph{exact} flow for a nearby Hamiltonian system, whose Hamiltonian can be expressed as the asymptotic series

\begin{equation}
\label{eq:ModifiedHamiltonian}
\mathscr{H}(q,p) = H(q,p) + \Delta t H_2(q,p) + \Delta t^2 H_3(q,p) + \ldots
\end{equation}

\noindent
Owing to this fact, under some additional assumptions symplectic numerical schemes nearly conserve the original Hamiltonian $H(q,p)$ over exponentially long time intervals. See \cite{HLWGeometric} for details. 

Let us briefly review the results of backward error analysis for the integrator \eqref{eq:PRK for DAE}. Suppose $g(y,X)$ satisfies the assumptions of the Implicit Function Theorem. Then, at least locally, we can solve the constraint $X = h(y)$. The Hamiltonian DAE system \eqref{eq:Hamiltonian DAE} can be then written as the following (implicit) ODE system for $y$ and $p$

\begin{align}
\label{eq:Hamiltonian DAE for BEA}
&\dot y = \frac{\partial \tilde H_N}{\partial p}\Big(y,p; h(y),h'(y) \dot y\Big),\\
&\dot p = -\frac{\partial \tilde H_N}{\partial y}\Big(y,p; h(y),h'(y) \dot y\Big). \nonumber
\end{align}

\noindent
Since we used the state-space formulation, the numerical scheme \eqref{eq:PRK for DAE} is equivalent to applying the same partitioned Runge-Kutta method to \eqref{eq:Hamiltonian DAE for BEA}, that is, we have $Q^i=h(Y^i)$ and $\dot Q^i = h'(Y^i) \dot Y^i$. We computed the corresponding modified equation for several symplectic methods, namely Gauss and Lobatto IIIA-IIIB quadratures. Unfortunately, none of the quadratures resulted in a form akin to \eqref{eq:Hamiltonian DAE for BEA} for some modified Hamiltonian function $\tilde{\mathscr{H}}_N$ related to $\tilde H_N$ by a series similar to \eqref{eq:ModifiedHamiltonian}. This hints at the fact that we should not expect this integrator to show excellent energy conservation over long integration times. One could also consider the implicit ODE system \eqref{eq:Hamiltonian IODE}, which has an obvious triple partitioned structure, and apply a different Runge-Kutta method to each variable $y$, $p$ and~$X$. Although we did not pursue this idea further, it seems unlikely it would bring a desirable result.

We therefore conclude that the control-theoretic strategy, while yielding a perfectly legitimate numerical method, does not take the full advantage of the underlying geometric structures. Let us point out that, while we used a variational discretization of the governing physical PDE, the mesh equations were coupled in a manner that is typical of the existing $r$-adaptive methods (see \cite{HuangRussellREVIEW}, \cite{HuangRussellBOOK}). We now turn our attention to a second approach, which offers a novel way of coupling the mesh equations to the physical equations.

\section{Lagrange multiplier approach to $r$-adaptation}
\label{sec:approach2}

As we saw in Section~\ref{sec:approach1}, discretization of the variational principle alone is not sufficient if we would like to accurately capture the geometric properties of the physical system described by \eqref{eq:action}. In this section we propose a new technique of coupling the mesh equations to the physical equations. Our idea is based on the observation that in $r$-adaptation the number of mesh points is constant, therefore we can treat them as pseudo-particles, and we can incorporate their dynamics into the variational principle. We show that this strategy results in integrators that much better preserve the energy of the considered system.

\subsection{Reparametrized Lagrangian}
\label{subsec:reparametrized lagrangian 2}

In this approach, we treat $X(x,t)$ as an independent field, that is, another degree of freedom, and we will treat the \textquoteleft modified' action \eqref{eq:action2} as a functional of both $\varphi$ and $X$: $\tilde S = \tilde S[\varphi,X]$. For the purpose of the derivations below, we assume that $\varphi(.,t)$ and $X(.,t)$ are continuous and piecewise $C^1$. One could consider the closure of this space in the topology of either Hilbert or Banach space of sufficiently integrable functions and interpret differentiation in a sufficiently weak sense, but this functional-analytic aspect is of little importance for the developments in this section. We refer the interested reader to \cite{EbinMarsden} and \cite{Evans}. As in Section~\ref{subsec:reparametrized lagrangian}, let $\xi(X,t)$ be the function such that $\xi(.,t)=X(.,t)^{-1}$, that is $\xi(X(x,t),t)=x$. Then $\tilde S[\varphi,X] = S[\varphi(\xi(X,t),t)]$. We begin with two propositions and one corollary which will be important for the rest of our exposition.

\begin{prop}
\label{Thm: App2 extremization equivalence}
Extremizing $S[\phi]$ with respect to $\phi$ is equivalent to extremizing $\tilde S[\varphi,X]$ with respect to both $\varphi$ and $X$.
\end{prop}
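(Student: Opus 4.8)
The plan is to prove the equivalence by relating the variational derivatives of $S$ and $\tilde{S}$, mirroring the structure of the proof of Proposition~\ref{Thm: App1 extremization equivalence} but accounting for the fact that $\tilde{S}$ now depends on two independent fields. The central observation is that since $\tilde{S}[\varphi,X] = S[\varphi(\xi(X,t),t)]$ is obtained purely by a change of variables in the integral defining $S$, the value of $\tilde{S}$ at $(\varphi,X)$ equals the value of $S$ at the reconstructed field $\phi(X,t) = \varphi(\xi(X,t),t)$. The key point, which distinguishes this from the first proposition, is that the \emph{same} field $\phi$ can be represented by many different pairs $(\varphi,X)$: given any reparametrization $X$, one sets $\varphi(x,t) = \phi(X(x,t),t)$. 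Extremizing over \emph{both} arguments therefore must be shown to be equivalent to extremizing over the single field $\phi$.

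First I would establish the forward direction. Suppose $\phi$ extremizes $S[\phi]$, so $\delta S[\phi]\cdot\delta\phi = 0$ for all variations $\delta\phi$. Fix any admissible $X(x,t)$ and define $\varphi(x,t) = \phi(X(x,t),t)$. I would then show that an arbitrary variation $(\delta\varphi,\delta X)$ of the pair induces a corresponding variation $\delta\phi$ of the reconstructed field, and that under the identification $\tilde{S}[\varphi,X] = S[\phi]$ the total variation satisfies
\begin{equation}
\delta\tilde{S}[\varphi,X]\cdot(\delta\varphi,\delta X) = \delta S[\phi]\cdot\delta\phi = 0. \nonumber
\end{equation}
Because $S$ extremizes against every $\delta\phi$, the right-hand side vanishes regardless of how $\delta\varphi$ and $\delta X$ are chosen, so $(\varphi,X)$ extremizes $\tilde{S}$ jointly. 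The conceptually delicate ingredient here is verifying that the map sending $(\delta\varphi,\delta X)$ to the induced $\delta\phi$ is well defined and that the chain rule through the composition $\varphi(\xi(X,t),t)$ produces exactly $\delta S[\phi]\cdot\delta\phi$; this uses that $\xi(\cdot,t)$ is the inverse homeomorphism of $X(\cdot,t)$, as guaranteed by the standing assumptions.

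For the converse, suppose $(\varphi,X)$ extremizes $\tilde{S}$ jointly, meaning $\delta\tilde{S}\cdot(\delta\varphi,\delta X) = 0$ for all variations in both slots. Define $\phi(X,t) = \varphi(\xi(X,t),t)$. I would argue that an arbitrary variation $\delta\phi$ of $\phi$ can be realized by choosing $\delta X = 0$ and taking $\delta\varphi(x,t) = \delta\phi(X(x,t),t)$, exactly as in the proof of Proposition~\ref{Thm: App1 extremization equivalence}. Restricting the joint stationarity condition to variations with $\delta X = 0$ then yields $\delta S[\phi]\cdot\delta\phi = \delta\tilde{S}\cdot(\delta\varphi,0) = 0$ for all $\delta\phi$, so $\phi$ extremizes $S$. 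Intuitively, this says that stationarity in the $X$ direction is \emph{extra} information beyond what is needed to recover the physical Euler--Lagrange equations: the $\varphi$-variations alone already reproduce extremality of $S$, while the $X$-variations encode additional mesh equations that the converse direction simply does not need.

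The main obstacle I anticipate is the bookkeeping of the induced variations rather than any deep difficulty: one must be careful that varying $\varphi$ and $X$ independently and then composing through $\xi$ does not produce spurious cross terms that are unaccounted for. Because $\tilde{S}[\varphi,X]$ and $S[\phi]$ are literally equal as numbers once $\phi$ is reconstructed from $(\varphi,X)$, the cleanest argument avoids computing the two variational derivatives separately and instead invokes the identity $\tilde{S}[\varphi,X] = S[\varphi(\xi(X,t),t)]$ directly, differentiating the composite. This sidesteps the need to expand $\delta\tilde{S}$ into its $\varphi$- and $X$-components explicitly; the equivalence then follows because extremality of a composite functional against all variations of its inputs is equivalent to extremality of the outer functional against all variations of the reconstructed field, provided the reconstruction map is surjective onto admissible $\phi$ (forward direction) and the induced variations span all $\delta\phi$ (converse direction). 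I would make surjectivity and the spanning property precise as the two logical endpoints of the argument.
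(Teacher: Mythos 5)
Your proposal is correct and follows essentially the same route as the paper: the forward direction pushes an arbitrary joint variation $(\delta\varphi,\delta X)$ through the composition $\varphi(\xi(X,t),t)$ to an induced $\delta\phi$ annihilated by $\delta S$, and the converse restricts to variations with $\delta X=0$, exactly as the paper does via its explicit formulas for $\delta_1\tilde S$ and $\delta_2\tilde S$. The only difference is presentational — the paper records the two component variational derivatives separately rather than differentiating the composite as a whole — so no further comparison is needed.
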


\begin{proof}
The variational derivatives of $S$ and $\tilde S$ are related by the formula

\begin{align}
\label{eq:approach 2 - variational derivatives}
&\delta_1 \tilde S[\varphi,X] \cdot \delta \varphi(x,t) = \delta S[\varphi(\xi(X,t),t)] \cdot \delta \varphi(\xi(X,t),t),\\
&\delta_2 \tilde S[\varphi,X] \cdot \delta X(x,t) = \delta S[\varphi(\xi(X,t),t)] \cdot \Big( -\frac{\varphi_x(\xi(X,t),t)}{X_x(\xi(X,t),t)} \delta X(\xi(X,t),t) \Big), \nonumber
\end{align} 

\noindent
where $\delta_1$ and $\delta_2$ denote differentiation with respect to the first and second argument, respectively. Suppose $\phi(X,t)$ extremizes $S[\phi]$, i.e. $\delta S[\phi]\cdot\delta \phi=0$ for all variations $\delta \phi$. Choose an arbitrary $X(x,t)$, such that $X(.,t)$ is a (sufficiently smooth) homeomorphism and define $\varphi(x,t)=\phi(X(x,t),t)$. Then by the formula above we have $\delta_1 \tilde S[\varphi,X]=0$ and $\delta_2 \tilde S[\varphi,X]=0$, so the pair $(\varphi,X)$ extremizes $\tilde S$. Conversely, suppose the pair $(\varphi,X)$ extremizes $\tilde S$, that is $\delta_1 \tilde S[\varphi,X] \cdot \delta \varphi =0 $ and $\delta_2 \tilde S[\varphi,X] \cdot \delta X =0$ for all variations $\delta \varphi$ and $\delta X$. Since we assume $X(.,t)$ is a homeomorphism, we can define $\phi(X,t)=\varphi(\xi(X,t),t)$. Note that an arbitrary variation $\delta \phi(X,t)$ induces the variation $\delta \varphi(x,t) = \delta \phi(X(x,t),t)$. Then we have $\delta S[\phi] \cdot \delta \phi = \delta_1 \tilde S[\varphi,X] \cdot \delta \varphi = 0$ for all variations $\delta \phi$, so $\phi(X,t)$ extremizes $S[\phi]$.\\
\end{proof}

\begin{prop}
\label{Thm: E-L equations dependence}
The equation $\delta_2 \tilde S[\varphi,X] = 0$ is implied by the equation $\delta_1 \tilde S[\varphi,X] = 0$.
\end{prop}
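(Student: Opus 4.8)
The plan is to exploit the variational-derivative identities \eqref{eq:approach 2 - variational derivatives} directly, reading off from them how $\delta_2 \tilde S$ is expressed in terms of the single underlying object $\delta S[\phi]$. Both equations in \eqref{eq:approach 2 - variational derivatives} have the same factor $\delta S[\varphi(\xi(X,t),t)]$ appearing on the right-hand side; the only difference is the test function against which $\delta S$ is paired. In the first line the test function is an arbitrary variation $\delta \varphi(\xi(X,t),t)$, while in the second line it is the specific combination $-\frac{\varphi_x}{X_x}\,\delta X(\xi(X,t),t)$, evaluated at $(\xi(X,t),t)$.

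First I would observe that the hypothesis $\delta_1 \tilde S[\varphi,X]=0$ means that $\delta_1 \tilde S[\varphi,X]\cdot\delta\varphi=0$ for \emph{all} admissible variations $\delta\varphi$. By the first identity in \eqref{eq:approach 2 - variational derivatives}, this is exactly the statement that $\delta S[\varphi(\xi(X,t),t)]\cdot u = 0$ for every test function $u$ of the form $u=\delta\varphi(\xi(X,t),t)$. Since $X(\cdot,t)$ is a homeomorphism, as the variation $\delta\varphi$ ranges over all admissible variations, the induced $u$ ranges over all admissible variations as well (this is precisely the reparametrization bijection on variations already used in the proof of Proposition~\ref{Thm: App2 extremization equivalence}). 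Hence $\delta_1 \tilde S[\varphi,X]=0$ is equivalent to $\delta S[\varphi(\xi(X,t),t)]=0$ as a functional, i.e.\ it vanishes when paired with \emph{any} test function.

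The conclusion is then immediate: comparing with the second identity in \eqref{eq:approach 2 - variational derivatives}, the right-hand side is $\delta S[\varphi(\xi(X,t),t)]$ paired with the particular test function $-\frac{\varphi_x}{X_x}\,\delta X$. But we have just shown that $\delta S[\varphi(\xi(X,t),t)]$ annihilates every test function, so in particular it annihilates this one, for every choice of $\delta X$. Therefore $\delta_2 \tilde S[\varphi,X]\cdot\delta X = 0$ for all $\delta X$, which is exactly $\delta_2 \tilde S[\varphi,X]=0$. In short, once the field equation for $\varphi$ holds as a genuine functional identity, the field equation for $X$ holds automatically because its left-hand side factors through the former. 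This reflects the fact that $X$ is a pure reparametrization (gauge) degree of freedom.

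The only subtle point — and the step I would be most careful about — is justifying that the surjectivity/bijection of variations is genuine, so that $\delta_1 \tilde S = 0$ really forces $\delta S[\varphi(\xi(X,t),t)]$ to vanish against \emph{all} test functions and not merely against those arising as $\delta\varphi(\xi(X,t),t)$. This hinges on the admissible variation classes being closed under the reparametrization $x \mapsto X(x,t)$ and its inverse, which holds because $X(\cdot,t)$ and $\xi(\cdot,t)$ are homeomorphisms that are piecewise $\mathcal{C}^1$, so composition preserves the relevant regularity and boundary conditions. I would state this closure explicitly but not belabor it, since it is exactly the bijection already invoked in Proposition~\ref{Thm: App2 extremization equivalence}. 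Everything else is a direct substitution into \eqref{eq:approach 2 - variational derivatives}, requiring no computation.
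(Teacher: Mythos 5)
Your proof is correct and follows essentially the same route as the paper: both deduce from the first identity in \eqref{eq:approach 2 - variational derivatives} (using the bijection of variations under the reparametrization, as in Proposition~\ref{Thm: App2 extremization equivalence}) that $\delta S = 0$ as a functional, and then read off $\delta_2 \tilde S \cdot \delta X = 0$ from the second identity. Your explicit attention to the surjectivity of the induced variations only makes precise a step the paper leaves implicit.
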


\begin{proof}
As we saw in the proof of Proposition~\ref{Thm: App2 extremization equivalence}, the condition $\delta_1 \tilde S[\varphi,X] \cdot \delta \varphi = 0$ implies $\delta S =0$. By (\ref{eq:approach 2 - variational derivatives}), this in turn implies $\delta_2 \tilde S[\varphi,X] \cdot \delta X = 0$ for all $\delta X$. Note that this argument cannot be reversed: $\delta_2 \tilde S[\varphi,X] \cdot \delta X = 0$ does not imply $\delta S =0$ when $\varphi_x=0$.\\
\end{proof}

\begin{corollary}
\label{Thm: Degeneracy}
The field theory described by $\tilde S[\varphi,X]$ is degenerate and the solutions to the Euler-Lagrange equations are not unique.
\end{corollary}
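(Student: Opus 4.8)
The plan is to prove the Corollary as a direct consequence of Proposition~\ref{Thm: E-L equations dependence}, which is the real workhorse here; the Corollary merely repackages that dependence as a statement about degeneracy and non-uniqueness. I would begin by recalling what ``degenerate'' means in this context: the Euler--Lagrange equations $\delta_1 \tilde S = 0$ and $\delta_2 \tilde S = 0$ are not functionally independent. Proposition~\ref{Thm: E-L equations dependence} tells us precisely this, since the second equation is implied by the first. Thus the system of Euler--Lagrange equations contains a redundancy: it does not furnish an independent evolution equation for the extra field $X$.

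\medskip

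First I would make the non-uniqueness explicit. Suppose $\phi(X,t)$ is any solution extremizing the original action $S[\phi]$. By Proposition~\ref{Thm: App2 extremization equivalence}, for \emph{any} choice of reparametrization $X(x,t)$ with $X(\cdot,t)$ a (sufficiently smooth) homeomorphism, the pair $(\varphi,X)$ with $\varphi(x,t) = \phi(X(x,t),t)$ extremizes $\tilde S[\varphi,X]$. Since there is an infinite-dimensional family of admissible reparametrizations $X$, and each one yields a genuine solution of the Euler--Lagrange equations for $\tilde S$ (all encoding the \emph{same} underlying physical field $\phi$), the solutions are manifestly non-unique. I would phrase this as: given initial data for $\varphi$, the field $X$ is left completely undetermined by the dynamics, precisely because $\delta_2 \tilde S = 0$ carries no information beyond $\delta_1 \tilde S = 0$.

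\medskip

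Second, I would tie the redundancy to degeneracy in the Lagrangian/symplectic sense. The formal picture is that the map $X \mapsto \varphi = \phi \circ X$ is a reparametrization gauge symmetry of $\tilde S[\varphi,X]$: different $X$'s related by this reparametrization freedom describe the same physical configuration, so $\tilde S$ is invariant along gauge directions. A theory invariant under a field-dependent symmetry has a degenerate (i.e., only weakly nondegenerate, or singular) Legendre transform, and correspondingly its Euler--Lagrange equations do not determine a unique flow. I would note that this is exactly mirrored in the discrete setting of Section~\ref{sec:approach2}, which is why a user-specified adaptation strategy must later be \emph{imposed} as an extra constraint through Lagrange multipliers: the variational principle alone does not fix $X$.

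\medskip

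The main obstacle, such as it is, is purely expository rather than technical: Proposition~\ref{Thm: E-L equations dependence} already does all the analytic work, so I must be careful not to overclaim. The one subtlety I would flag is the caveat noted at the end of that proposition's proof --- the implication can fail to be reversible precisely where $\varphi_x = 0$. This means the degeneracy is, strictly speaking, the generic situation away from such points, and I would make sure the statement of non-uniqueness is understood modulo this caveat rather than asserted as an unconditional global equivalence. Beyond that, the Corollary follows immediately and needs no separate calculation.
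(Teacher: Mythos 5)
Your argument is correct and is exactly the one the paper intends: the corollary is stated without a separate proof precisely because Proposition~\ref{Thm: App2 extremization equivalence} (arbitrariness of $X$ yields an infinite family of extremals for a single physical $\phi$) and Proposition~\ref{Thm: E-L equations dependence} (no independent equation for $X$) together give degeneracy and non-uniqueness immediately. Your closing caveat about $\varphi_x=0$ is harmless but unnecessary — it concerns only the \emph{reverse} implication $\delta_2\tilde S=0 \Rightarrow \delta_1\tilde S=0$, which plays no role in the corollary.
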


\subsection{Spatial Finite Element discretization}
\label{subsec:FEM2}

The Lagrangian of the \textquoteleft reparametrized' theory $\tilde L: Q \times G \times W \times Z \longrightarrow \mathbb{R}$,

\begin{equation}
\label{eq:Lagrangian2}
\tilde{L}[\varphi,X,\varphi_t,X_t]= \int_0^{X_{max}} \mathcal{L} \Big(\varphi,\frac{\varphi_x}{X_x},\varphi_t-\frac{\varphi_x X_t}{X_x}\Big) X_x\,dx,
\end{equation}

\noindent
has the same form as \eqref{eq:Lagrangian} (we only treat it as a functional of $X$ and $X_t$ as well), where $Q$, $G$, $W$ and $Z$ are spaces of continuous and piecewise $C^1$ functions, as mentioned before. We again let $\Delta x = X_{max}/(N+1)$ and define the uniform mesh $x_i=i \cdot \Delta x$ for $i=0,1,...,N+1$. Define the finite element spaces

\begin{equation}
\label{eq:QNGNWNZN}
Q_N=G_N=W_N=Z_N=\text{span}(\eta_0,...,\eta_{N+1}),
\end{equation}

\noindent
where we used the finite elements \eqref{eq:basis functions}. We have $Q_N \subset Q$, $G_N \subset G$, $W_N \subset W$, $Z_N \subset Z$. In addition to \eqref{eq:phi FEM} we also consider

\begin{equation}
\label{eq:X FEM2}
X(x) = \sum_{i=0}^{N+1} X_i \eta_i(x), \quad\quad\quad \dot X(x) = \sum_{i=0}^{N+1} \dot X_i \eta_i(x).
\end{equation}

\noindent
The numbers $(y_i,X_i,\dot y_i,\dot X_i)$ thus form natural (global) coordinates on $Q_N \!\times\! G_N \!\times\!  W_N \!\times\!  Z_N$. We again consider the restricted Lagrangian $\tilde L_N= \tilde L|_{Q_N \times G_N \times W_N \times Z_N}$. In the chosen coordinates

\begin{equation}
\label{eq:LN FEM2}
\tilde L_N(y_1,...,y_{N},X_1,...,X_N,\dot y_1,...,\dot y_{N}, \dot X_1,...,\dot X_{N}) = \tilde L \Big[\varphi(x),X(x),\dot \varphi(x), \dot X(x) \Big],
\end{equation}

\noindent
where $\varphi(x)$, $X(x)$, $\dot \varphi(x)$, $\dot X(x)$ are defined by \eqref{eq:phi FEM} and \eqref{eq:X FEM2}. Once again, we refrain from writing $y_0$, $y_{N+1}$, $\dot y_0$, $\dot y_{N+1}$, $X_0$, $X_{N+1}$, $\dot X_0$ and $\dot X_{N+1}$ as arguments of $\tilde L_N$ in the remainder of this section, as those are not actual degrees of freedom.

\subsection{Invertibility of the Legendre Transform}
\label{subsec:Invertibility}

For simplicity, let us restrict our considerations to Lagrangian densities of the form

\begin{equation}
\label{eq:ParticularDensity}
\mathcal{L}(\phi,\phi_X,\phi_t)=\frac{1}{2}\phi_t^2 - R(\phi_X, \phi).
\end{equation}

\noindent
We chose a kinetic term that is most common in applications. The corresponding \textquoteleft reparametrized' Lagrangian is

\begin{equation}
\label{eq:ParticularLagrangian}
\tilde L[\varphi,X,\varphi_t,X_t]=\int_0^{X_{max}} \frac{1}{2}X_x \Big(\varphi_t-\frac{\varphi_x}{X_x}X_t\Big)^2 \, dx -\ldots,
\end{equation}

\noindent
where we kept only the terms that involve the velocities $\varphi_t$ and $X_t$. The semi-discrete Lagrangian becomes

\begin{align}
\label{eq:ParticularLN}
\tilde L_N = \sum_{i=0}^N \frac{X_{i+1}-X_i}{6} \Big[  \Big(\dot y_i-\frac{y_{i+1}-y_i}{X_{i+1}-X_i} \dot X_i\Big)^2 &+\Big(\dot y_i-\frac{y_{i+1}-y_i}{X_{i+1}-X_i} \dot X_i\Big)\Big(\dot y_{i+1}-\frac{y_{i+1}-y_i}{X_{i+1}-X_{i}} \dot X_{i+1}\Big) \nonumber \\
                    &+ \Big(\dot y_{i+1}-\frac{y_{i+1}-y_i}{X_{i+1}-X_{i}} \dot X_{i+1}\Big)^2 \Big] - \ldots
\end{align}

\noindent
Let us define the conjugate momenta via the Legendre Transform

\begin{equation}
\label{eq:LegendreTransform}
p_i = \frac{\partial \tilde L_N}{\partial \dot y_i}, \quad\quad\quad\quad\quad S_i = \frac{\partial \tilde L_N}{\partial \dot X_i}, \quad\quad\quad\quad\quad i=1,2,...,N.
\end{equation}

\noindent
This can be written as

\begin{equation}
\label{eq:MatrixLegendre}
\left(\begin{array}{c}p_1\\S_1\\ \vdots \\ p_N\\S_N \end{array} \right)
=
\tilde M_N(y,X) \cdot
\left(\begin{array}{c}\dot y_1\\\dot X_1\\ \vdots \\ \dot y_N\\\dot X_N \end{array} \right),
\end{equation}

\noindent
where the $2N\times 2N$ mass matrix $\tilde M_N(y,X)$ has the following block tridiagonal structure

\begin{equation}
\label{eq:MassMatrix}
\tilde M_N(y,X) =
\left(\begin{array}{cccccc}
A_1 & B_1 &       &       &         &\\ 
B_1 & A_2 & B_2   &       &         &\\
    & B_2 & A_3   & B_3   &         &\\
    &     &\ddots &\ddots &\ddots   &\\
    &     &       &\ddots & \ddots  &B_{N-1}\\
    &     &       &       & B_{N-1} & A_N\\

\end{array} \right),
\end{equation}

\noindent
with the $2\times 2$ blocks

\begin{equation}
\label{eq:MassMatrixBlocks}
A_i =
\left(\begin{array}{cc}
\frac{1}{3}\delta_{i-1} + \frac{1}{3}\delta_i & -\frac{1}{3}\delta_{i-1}\gamma_{i-1} - \frac{1}{3}\delta_i\gamma_i \\
-\frac{1}{3}\delta_{i-1}\gamma_{i-1} - \frac{1}{3}\delta_i\gamma_i & \frac{1}{3}\delta_{i-1}\gamma_{i-1}^2 + \frac{1}{3}\delta_i\gamma_i^2
\end{array} \right),\quad\quad\:
B_i =
\left(\begin{array}{cc}
\frac{1}{6}\delta_i &  -\frac{1}{6}\delta_i\gamma_i \\
-\frac{1}{6}\delta_i\gamma_i & \frac{1}{6}\delta_i\gamma_i^2
\end{array} \right),
\end{equation}

\noindent
where

\begin{equation}
\label{eq:GammaDelta}
\delta_i = X_{i+1}-X_i, \quad\quad\quad\quad \gamma_i = \frac{y_{i+1}-y_i}{X_{i+1}-X_i}.
\end{equation}

\noindent
From now on we will always assume $\delta_i>0$, as we demand that $X(x) = \sum_{i=0}^{N+1} X_i \eta_i(x)$ be a homeomorphism. We also have

\begin{equation}
\label{eq:detAi}
\det A_i = \frac{1}{9} \delta_{i-1} \delta_i (\gamma_{i-1}-\gamma_i)^2.
\end{equation}

\noindent

\begin{prop}
\label{Thm: Mass Matrix Degeneracy}
The mass matrix $\tilde M_N(y,X)$ is non-singular almost everywhere (as a function of the $y_i$'s and $X_i$'s) and singular iff $\gamma_{i-1}=\gamma_i$ for some $i$.
\end{prop}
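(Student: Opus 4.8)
The plan is to avoid expanding the $2N\times 2N$ block determinant and instead to recognize $\tilde M_N$ as the Hessian of the kinetic quadratic form, and then to characterize its kernel. First I would rewrite the velocity-dependent part of \eqref{eq:ParticularLN} as
\[
T = \sum_{i=0}^N \frac{\delta_i}{6}\big(a_i^2 + a_i b_i + b_i^2\big), \qquad a_i = \dot y_i - \gamma_i \dot X_i,\quad b_i = \dot y_{i+1} - \gamma_i \dot X_{i+1}.
\]
In the coordinates $\dot q = (\dot y_1,\dot X_1,\ldots,\dot y_N,\dot X_N)$ this is $T = \tfrac12\,\dot q^{\,T}\tilde M_N\dot q$, so $\tilde M_N = \partial^2 T/\partial\dot q^{\,2}$ is exactly the Hessian; a quick check of one diagonal entry against the block $A_i$ in \eqref{eq:MassMatrixBlocks} confirms the bookkeeping. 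The boundary conditions fix $\dot y_0=\dot X_0=\dot y_{N+1}=\dot X_{N+1}=0$, so $\dot q$ has exactly $2N$ free components.

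Next I would exploit positivity. The elementary form $a^2+ab+b^2$ is positive definite, so each summand of $T$ is non-negative and vanishes iff $a_i=b_i=0$; hence $\tilde M_N$ is positive semidefinite. For a symmetric positive-semidefinite matrix, singularity is equivalent to the existence of a nonzero kernel vector, i.e.\ a nonzero $\dot q$ with $T(\dot q)=0$. Thus $\tilde M_N$ is singular if and only if the system $a_i=0,\ b_i=0$ (for $i=0,\ldots,N$) admits a nonzero solution.

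The core computation is to solve this system. Each velocity pair $(\dot y_j,\dot X_j)$, $1\le j\le N$, enters only through $a_j=\dot y_j-\gamma_j\dot X_j$ and $b_{j-1}=\dot y_j-\gamma_{j-1}\dot X_j$; setting both to zero and subtracting gives $(\gamma_{j-1}-\gamma_j)\,\dot X_j=0$. If $\gamma_{i-1}\ne\gamma_i$ for every $i$, this forces $\dot X_j=0$ and then $\dot y_j=0$ for all $j$, so the kernel is trivial and $\tilde M_N$ is non-singular. Conversely, if $\gamma_{i-1}=\gamma_i$ for some $i$, the vector defined by $\dot X_i\neq0$, $\dot y_i=\gamma_i\dot X_i$, and all other components zero touches only the constraints $a_i$ and $b_{i-1}$, both of which then vanish; it therefore lies in the kernel, and $\tilde M_N$ is singular. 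This establishes the stated equivalence.

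For the almost-everywhere claim I would observe that, after clearing the positive factors $\delta_i$, each equation $\gamma_{i-1}=\gamma_i$ becomes a nontrivial polynomial relation among the $y_i$'s and $X_i$'s, so the singular locus $\bigcup_i\{\gamma_{i-1}=\gamma_i\}$ is a finite union of algebraic hypersurfaces and hence has Lebesgue measure zero. The only delicate step is the index bookkeeping in the third paragraph---correctly tracking which $a_i,b_i$ a given velocity component appears in, and using the boundary conditions that trivialize $a_0$ and $b_N$---but once $T$ is written in the factored $(a_i,b_i)$ variables the argument is essentially mechanical, and I expect no genuine obstacle beyond this.
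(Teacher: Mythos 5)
Your proof is correct, but it takes a genuinely different route from the paper. The paper proves the statement by block Gaussian elimination of the tridiagonal structure \eqref{eq:MassMatrix}: it derives the recurrence $C_{n+1}=A_{n+1}-B_nC_n^{-1}B_n$, guesses and verifies the closed form \eqref{eq:C general formula} by induction, and arrives at the explicit determinant \eqref{eq:detMassMatrix}, from which both claims are read off. You instead exploit the fact that $\tilde M_N$ is the Hessian of the kinetic form $T=\sum_i\frac{\delta_i}{6}(a_i^2+a_ib_i+b_i^2)$, which (under the standing assumption $\delta_i>0$) is positive semidefinite, so singularity reduces to finding a nonzero null vector of $T$; the factored variables $a_i,b_i$ make the kernel computation a two-line index chase. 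Your argument is shorter, avoids the inductive verification of \eqref{eq:C general formula}, and yields positive semidefiniteness of the mass matrix as a by-product; moreover your explicit kernel vector ($\dot X_i\neq 0$, $\dot y_i=\gamma_i\dot X_i$, all else zero) is exactly the infinitesimal version of the geometric degeneracy described in Remark~II and Figure~\ref{fig: Degeneracy}, which the paper's determinant computation does not expose. What the paper's approach buys in exchange is the quantitative formula \eqref{eq:detMassMatrix} for $\det\tilde M_N$, which measures how close to singular the matrix is as $\gamma_{i-1}\to\gamma_i$ and underlies the later discussion of near-singular behavior; your argument gives only the qualitative singular/non-singular dichotomy. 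The one point worth making explicit in your write-up is that positive semidefiniteness (and hence the equivalence ``singular $\iff$ $T$ has a nontrivial zero'') genuinely requires $\delta_i>0$; you use this tacitly, and it is the paper's standing assumption, but it should be cited where the semidefiniteness is first invoked rather than only at the measure-zero step.
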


\begin{proof} 
We will compute the determinant of $\tilde M_N(y,X)$ by transforming \eqref{eq:MassMatrix} into a block upper triangular form by zeroing the blocks $B_i$ below the diagonal. Let us start with the block $B_1$. We use linear combinations of the first two rows of the mass matrix to zero the elements of the block $B_1$ below the diagonal. Suppose $\gamma_0=\gamma_1$. Then it is easy to see that the first two rows of the mass matrix are not linearly independent, so the determinant of the mass matrix is zero. Assume $\gamma_0\neq\gamma_1$. Then by \eqref{eq:detAi} the block $A_1$ is invertible. We multiply the first two rows of the mass matrix by $B_1 A_1^{-1}$ and subtract the result from the third and fourth rows. This zeroes the block $B_1$ below the diagonal and replaces the block $A_2$ by

\begin{equation}
\label{eq:C2}
C_2 = A_2 - B_1 A_1^{-1} B_1.
\end{equation}

\noindent
We now zero the block $B_2$ below the diagonal in a similar fashion. After $n-1$ steps of this procedure the mass matrix is transformed into

\begin{equation}
\label{eq:MassMatrixZeroing}
\left(\begin{array}{ccccccc}
C_1 & B_1 &       &       &         &       &\\ 
    & C_2 & B_2   &       &         &       &\\
    &     &\ddots &\ddots &         &       &\\
    &     &       &C_n    & B_n     &       &\\
    &     &       &B_n    & A_{n+1} &\ddots &\\
    &     &       &       & \ddots  &\ddots &B_{N-1}\\
    &     &       &       &         &B_{N-1}&A_N\\
\end{array} \right).
\end{equation}

\noindent
In a moment we will see that $C_n$ is singular iff $\gamma_{n-1}=\gamma_n$ and in that case the two rows of the matrix above that contain $C_n$ and $B_n$ are linearly dependent, thus making the mass matrix singular. Suppose $\gamma_{n-1}\neq\gamma_n$, so that $C_n$ is invertible. In the next step of our procedure the block $A_{n+1}$ is replaced by

\begin{equation}
\label{eq:Cn1}
C_{n+1} = A_{n+1} - B_n C_n^{-1} B_n.
\end{equation}

\noindent
Together with the condition $C_1=A_1$ this gives us a recurrence. By induction on $n$ we find that

\begin{equation}
\label{eq:C general formula}
C_n=
\left(\begin{array}{cc}
\frac{1}{4}\delta_{n-1} + \frac{1}{3}\delta_n & -\frac{1}{4}\delta_{n-1}\gamma_{n-1} - \frac{1}{3}\delta_n\gamma_n \\
-\frac{1}{4}\delta_{n-1}\gamma_{n-1} - \frac{1}{3}\delta_n\gamma_n & \frac{1}{4}\delta_{n-1}\gamma_{n-1}^2 + \frac{1}{3}\delta_n\gamma_n^2
\end{array} \right)
\end{equation}

\noindent
and

\begin{equation}
\label{eq:detCi}
\det C_i = \frac{1}{12} \delta_{i-1} \delta_i (\gamma_{i-1}-\gamma_i)^2,
\end{equation}

\noindent
which justifies our assumptions on the invertibility of the blocks $C_i$. We can now express the determinant of the mass matrix as $\det C_1 \cdot ... \cdot \det C_N$. The final formula is

\begin{equation}
\label{eq:detMassMatrix}
\det \tilde M_N(y,X) = \frac{\delta_0 \delta_1^2...\delta_{N-1}^2 \delta_N}{9\cdot 12^{N-1}}(\gamma_0-\gamma_1)^2...(\gamma_{N-1}-\gamma_N)^2.
\end{equation}

\noindent
We see that the mass matrix becomes singular iff $\gamma_{i-1}=\gamma_i$ for some $i$ and this condition defines a measure zero subset of $\mathbb{R}^{2N}$.\\
\end{proof}

\paragraph{Remark I.} This result shows that the finite-dimensional system described by the semi-discrete Lagrangian \eqref{eq:ParticularLN} is non-degenerate almost everywhere. This means that, unlike in the continuous case, the Euler-Lagrange equations corresponding to the variations of the $y_i$'s and $X_i$'s are independent of each other (almost everywhere) and the equations corresponding to the $X_i$'s are in fact necessary for the correct description of the dynamics. This can also be seen in a more general way. Owing to the fact we are considering a finite element approximation, the semi-discrete action functional $\tilde S_N$ is simply a restriction of $\tilde S$, and therefore formulas \eqref{eq:approach 2 - variational derivatives} still hold. The corresponding Euler-Lagrange equations take the form

\begin{align}
\label{eq:Semi-discrete E-L}
&\delta_1 \tilde S[\varphi,X] \cdot \delta \varphi(x,t) = 0,\\
&\delta_2 \tilde S[\varphi,X] \cdot \delta X(x,t) = 0, \nonumber
\end{align} 

\noindent
which must hold for all variations $\delta \varphi(x,t) \!=\! \sum_{i=1}^N \delta y_i(t) \eta_i(x)$ and $\delta X(x,t) \!=\! \sum_{i=1}^N \delta X_i(t) \eta_i(x)$. Since we are working in a finite dimensional subspace, the second equation now does not follow from the first equation. To see this, consider a particular variation $\delta X(x,t) = \delta X_k(t) \eta_k(x)$ for some $k$, where $\delta X_k\not\equiv 0$. Then we have

\begin{equation}
\label{eq:Discontinuous variation}
-\frac{\varphi_x}{X_x} \delta X_k(t) = 
\left\{
\begin{array}{cc}
-\gamma_{k-1} \, \delta X_k(t) \, \eta_k(x), & \quad\mbox{if } x_{k-1} \leq x \leq x_k,\\
-\gamma_{k} \, \delta X_k(t) \, \eta_k(x), & \quad \mbox{if } x_k \leq x \leq x_{k+1},\\
0, & \mbox{otherwise,}\\
\end{array}
\right.
\end{equation}

\noindent
which is discontinuous at $x=x_k$ and cannot be expressed as $\sum_{i=1}^N \delta y_i(t) \eta_i(x)$ for any $\delta y_i(t)$, unless $\gamma_{k-1}=\gamma_k$. Therefore, we cannot invoke the first equation to show that $\delta_2 \tilde S[\varphi,X] \cdot \delta X(x,t) = 0$. The second equation becomes independent.

\paragraph{Remark II.} It is also instructive to realize what exactly happens when $\gamma_{k-1}=\gamma_k$. This means that locally in the interval $[X_{k-1},X_{k+1}]$ the field $\phi(X,t)$ is a straight line with the slope $\gamma_k$. It also means that there are infinitely many values $(X_k,y_k)$ that reproduce the same local shape of $\phi(X,t)$. This reflects the arbitrariness of $X(x,t)$ in the infinite-dimensional setting. In the finite element setting, however, this holds only when the points $(X_{k-1},y_{k-1})$, $(X_{k},y_{k})$ and $(X_{k+1},y_{k+1})$ line up. Otherwise any change to the middle point changes the shape of $\phi(X,t)$. See Figure~\ref{fig: Degeneracy}.

\begin{figure}[tbp]
	\centering
		\includegraphics[width=0.9\textwidth]{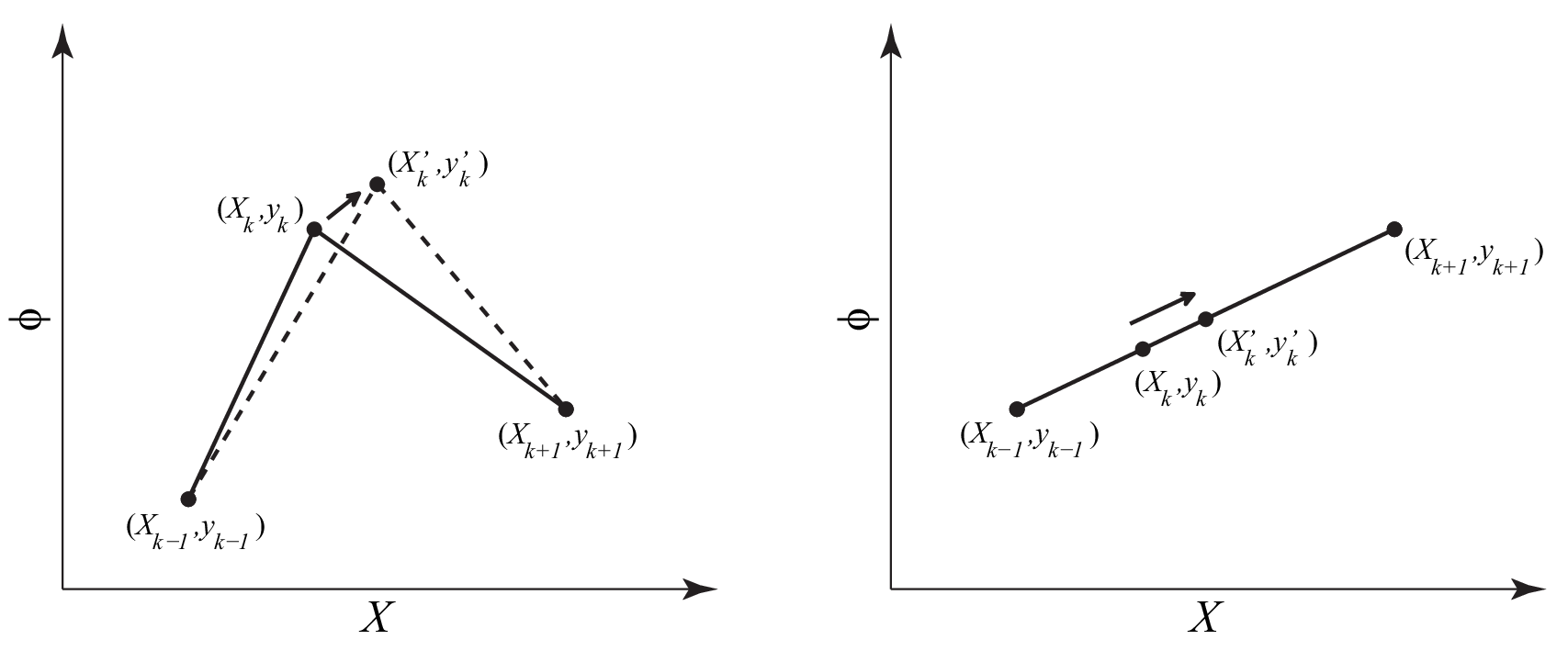}
		\caption{\emph{Left:} If $\gamma_{k-1}\not=\gamma_k$, then any change to the middle point changes the local shape of $\phi(X,t)$. \emph{Right:} If $\gamma_{k-1}=\gamma_k$, then there are infinitely many possible positions for $(X_{k},y_{k})$ that reproduce the local linear shape of $\phi(X,t)$.}
		\label{fig: Degeneracy}
\end{figure}

\subsection{Existence and uniqueness of solutions}
\label{subsec:Existence and Uniqueness}

Since the Legendre Transform \eqref{eq:MatrixLegendre} becomes singular at some points, this raises a question about the existence and uniqueness of the solutions to the Euler-Lagrange equations \eqref{eq:Semi-discrete E-L}. In this section we provide a partial answer to this problem. We will begin by computing the Lagrangian symplectic form

\begin{equation}
\label{eq:Symplectic form definition}
\tilde \Omega_N = \sum_{i=1}^N dy_i \wedge dp_i + dX_i \wedge dS_i, 
\end{equation}

\noindent
where $p_i$ and $S_i$ are given by \eqref{eq:LegendreTransform}. For notational convenience we will collectively denote $q=(y_1,X_1,...,y_N,X_N)^T$ and $\dot q=(\dot y_1, \dot X_1,...,\dot y_N,\dot X_N)^T$. Then in the ordered basis $(\frac{\partial}{\partial q_1},...,\frac{\partial}{\partial q_{2N}},\frac{\partial}{\partial \dot q_1},...,\frac{\partial}{\partial \dot q_{2N}})$ the symplectic form can be represented by the matrix

\begin{equation}
\label{eq:Symplectic Form Blocks}
\tilde \Omega_N (q, \dot q) =
\left(\begin{array}{cc}
 \tilde \Delta_N(q,\dot q) & \tilde M_N(q)  \\
 -\tilde M_N(q) & 0 
\end{array} \right),
\end{equation}

\noindent
where the $2N \times 2N$ block $\tilde \Delta_N (q, \dot q)$ has the further block tridiagonal structure

\begin{equation}
\label{eq:Delta Matrix Blocks}
\tilde \Delta_N(q,\dot q) =
\left(\begin{array}{cccccc}
\Gamma_1     & \Lambda_1    &             &             &         				 &\\ 
-\Lambda_1^T & \Gamma_2     & \Lambda_2   &             &         				 &\\
             & -\Lambda_2^T & \Gamma_3    & \Lambda_3   &         				 &\\
             &              &\ddots       &\ddots       &\ddots   				 &\\
   					 & 					    &   			    &\ddots 			& \ddots           &\Lambda_{N-1}\\
   					 &					    &			        &   		      & -\Lambda_{N-1}^T & \Gamma_N\\

\end{array} \right)
\end{equation}

\noindent
with the $2 \times 2$ blocks

\begin{align}
\label{eq:Gamma Lambda Blocks}
\Gamma_i &=
\left(\begin{array}{cc}
 0 & -\frac{\dot y_{i+1}-\dot y_{i-1}}{3}-\frac{\dot X_{i-1}+2\dot X_{i}}{3} \gamma_{i-1}+\frac{2\dot X_{i}+\dot X_{i+1}}{3} \gamma_{i} \\
\frac{\dot y_{i+1}-\dot y_{i-1}}{3}+\frac{\dot X_{i-1}+2\dot X_{i}}{3} \gamma_{i-1}-\frac{2\dot X_{i}+\dot X_{i+1}}{3} \gamma_{i} & 0
\end{array} \right), \nonumber \\
\Lambda_i &=
\left(\begin{array}{cc}
-\frac{\dot X_i + \dot X_{i+1}}{2}  &  -\frac{\dot y_{i+1}-\dot y_{i}}{6}+\frac{\dot X_{i}+2\dot X_{i+1}}{3} \gamma_{i} \\
\frac{\dot y_{i+1}-\dot y_{i}}{6}+\frac{2 \dot X_{i}+\dot X_{i+1}}{3} \gamma_{i} & -\frac{\dot X_i + \dot X_{i+1}}{2} \gamma_i^2
\end{array} \right).
\end{align}

\noindent
In this form, it is easy to see that

\begin{equation}
\label{eq:Symplectic Form Determinant}
\det \tilde \Omega_N(q,\dot q) = \Big(\det \tilde M_N(q)\Big)^2,
\end{equation}

\noindent
so the symplectic form is singular whenever the mass matrix is.

The energy corresponding to the Lagrangian \eqref{eq:ParticularLN} can be written as

\begin{equation}
\label{eq:EN with R terms}
\tilde E_N(q,\dot q) = \frac{1}{2} \dot q^T \tilde M_N(q) \, \dot q + \sum_{k=0}^N \int_{x_k}^{x_{k+1}} R\Big( \gamma_k,y_k \eta_k(x)+y_{k+1} \eta_{k+1}(x)\Big)\frac{X_{k+1}-X_k}{\Delta x} \, dx.
\end{equation}

\noindent
In the chosen coordinates, $d\tilde E_N$ can be represented by the row vector $d\tilde E_N=(\partial \tilde E_N / \partial q_1,...,\partial \tilde E_N / \partial \dot q_{2N})$. It turns out that

\begin{equation}
\label{eq:dE}
d\tilde E_N^T(q,\dot q) = 
\left(\begin{array}{c}
 \xi \\
\tilde M_N(q)\dot q
\end{array} \right),
\end{equation}

\noindent
where the vector $\xi$ has the following block structure

\begin{equation}
\label{eq:Ksi}
\xi = 
\left(\begin{array}{c}
\xi_1\\
\vdots\\
\xi_N
\end{array} \right).
\end{equation}

\noindent
Each of these blocks has the form $\xi_k = (\xi_{k,1}, \xi_{k,2})^T$. Through basic algebraic manipulations and integration by parts, one finds that

\begin{align}
\label{eq:Ksi1}
\xi_{k,1} = &\quad\,\frac{\dot y_{k+1} (2 \dot X_{k+1}+\dot X_k) + \dot y_k (\dot X_{k+1}-\dot X_{k-1})-\dot y_{k-1}(\dot X_{k}+2 \dot X_{k-1})}{6} \nonumber\\
						&+\frac{\dot X_k^2 +\dot X_k \dot X_{k-1}+\dot X_{k-1}^2}{3} \gamma_{k-1} - \frac{\dot X_{k+1}^2 +\dot X_{k+1} \dot X_{k}+\dot X_{k}^2}{3} \gamma_{k} \nonumber \\
						&+\frac{1}{\Delta x} \int_{x_{k-1}}^{x_k} \frac{\partial R}{\partial \phi_X}\Big( \gamma_{k-1},y_{k-1} \eta_{k-1}(x)+y_{k} \eta_{k}(x)\Big)\,dx \nonumber \\
						&- \frac{1}{\Delta x} \int_{x_{k}}^{x_{k+1}} \frac{\partial R}{\partial \phi_X}\Big( \gamma_{k},y_{k} \eta_{k}(x)+y_{k+1} \eta_{k+1}(x)\Big)\,dx\\
						&+\frac{1}{\gamma_{k-1}} \Big[R(\gamma_{k-1},y_k) - \frac{1}{\Delta x} \int_{x_{k-1}}^{x_k} R\Big( \gamma_{k-1},y_{k-1} \eta_{k-1}(x)+y_{k} \eta_{k}(x)\Big)\,dx \Big] \nonumber \\
						&-\frac{1}{\gamma_{k}} \Big[R(\gamma_{k},y_k) - \frac{1}{\Delta x} \int_{x_{k}}^{x_{k+1}} R\Big( \gamma_{k},y_{k} \eta_{k}(x)+y_{k+1} \eta_{k+1}(x)\Big)\,dx \Big], \nonumber
\end{align}						
						
\noindent
and						

\begingroup
\allowdisplaybreaks						
\begin{align}
\label{eq:Ksi2}
\xi_{k,2} = &\quad\,\frac{\dot y_{k-1}^2+\dot y_{k-1} \dot y_{k} -\dot y_{k}\dot y_{k+1} - \dot y_{k+1}^2 }{6} \nonumber\\
						&-\frac{\dot X_k^2 +\dot X_k \dot X_{k-1}+\dot X_{k-1}^2}{6} \gamma_{k-1}^2 + \frac{\dot X_{k+1}^2 +\dot X_{k+1} \dot X_{k}+\dot X_{k}^2}{6} \gamma_{k}^2 \nonumber \\
						&-\frac{\gamma_{k-1}}{\Delta x} \int_{x_{k-1}}^{x_k} \frac{\partial R}{\partial \phi_X}\Big( \gamma_{k-1},y_{k-1} \eta_{k-1}(x)+y_{k} \eta_{k}(x)\Big)\,dx \nonumber \\
						&+ \frac{\gamma_k}{\Delta x} \int_{x_{k}}^{x_{k+1}} \frac{\partial R}{\partial \phi_X}\Big( \gamma_{k},y_{k} \eta_{k}(x)+y_{k+1} \eta_{k+1}(x)\Big)\,dx \\
						&+\frac{1}{\Delta x} \int_{x_{k-1}}^{x_k} R\Big( \gamma_{k-1},y_{k-1} \eta_{k-1}(x)+y_{k} \eta_{k}(x)\Big)\,dx \nonumber \\
						&-\frac{1}{\Delta x} \int_{x_{k}}^{x_{k+1}} R\Big( \gamma_{k},y_{k} \eta_{k}(x)+y_{k+1} \eta_{k+1}(x)\Big)\,dx. \nonumber
\end{align}
\endgroup

\noindent
We are now ready to consider the generalized Hamiltonian equation 

\begin{equation}
i_Z \tilde \Omega_N = d \tilde E_N, 
\end{equation}

\noindent
which we solve for the vector field $Z = \sum_{i=1}^{2N} \alpha_i \,\partial/\partial q_i + \beta_i \, \partial / \partial \dot q_i$. In the matrix representation this equation takes the form

\begin{equation}
\label{eq:Generalized Hamiltonian Equation}
\tilde \Omega_N^T (q,\dot q) \cdot
\left(\begin{array}{c}
 \alpha \\
 \beta
\end{array} \right)
= d\tilde E_N^T(q,\dot q).
\end{equation}

\noindent
Equations of this form are called (quasilinear) implicit ODEs (see \cite{Rabier6}, \cite{Reissig}). If the symplectic form is nonsingular in a neighborhood of $(q^{(0)},\dot q^{(0)})$, then the equation can be solved directly via

$$Z = [\tilde \Omega_N^T (q,\dot q)]^{-1} d\tilde E_N^T(q,\dot q)$$

\noindent
to obtain the standard explicit ODE form and standard existence/uniqueness theorems (Picard's, Peano's, etc.) of ODE theory can be invoked to show local existence and uniqueness of the flow of $Z$ in a neighborhood of $(q^{(0)},\dot q^{(0)})$. If, however, the symplectic form is singular at $(q^{(0)},\dot q^{(0)})$, then there are two possibilities. The first case is

\begin{equation}
\label{eq:Algebraic singularity}
d\tilde E_N^T(q^{(0)},\dot q^{(0)}) \not \in \text{Range } \tilde \Omega_N^T (q^{(0)},\dot q^{(0)})
\end{equation}

\noindent
and it means there is no solution for $Z$ at $(q^{(0)},\dot q^{(0)})$. This type of singularity is called an \emph{algebraic} one and it leads to so called \emph{impasse points} (see \cite{Rabier1}-\cite{Rabier6}, \cite{Reissig}). 

The other case is

\begin{equation}
\label{eq:Geometric singularity}
d\tilde E_N^T(q^{(0)},\dot q^{(0)}) \in \text{Range } \tilde \Omega_N^T (q^{(0)},\dot q^{(0)})
\end{equation}

\noindent
and it means that there exists a nonunique solution $Z$ at $(q^{(0)},\dot q^{(0)})$. This type of singularity is called a \emph{geometric} one. If $(q^{(0)},\dot q^{(0)})$ is a limit of regular points of \eqref{eq:Generalized Hamiltonian Equation} (i.e. points where the symplectic form is nonsingular), then there might exist an integral curve of $Z$ passing through $(q^{(0)},\dot q^{(0)})$. See \cite{Rabier1}, \cite{Rabier2}, \cite{Rabier3}, \cite{Rabier4}, \cite{Rabier5}, \cite{Rabier6}, \cite{Reissig} for more details.

\begin{prop}
\label{Thm: Geometric singularities}
The singularities of the symplectic form $\tilde \Omega_N (q,\dot q)$ are geometric.
\end{prop}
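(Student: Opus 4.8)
The plan is to prove geometricity by exhibiting, at every singular point, an explicit solution of the generalized Hamiltonian equation \eqref{eq:Generalized Hamiltonian Equation}, which by the definition \eqref{eq:Geometric singularity} is exactly what ``geometric'' means. First I would recall that, by \eqref{eq:Symplectic Form Determinant} together with \eqref{eq:detMassMatrix}, the symplectic form is singular precisely at the configurations where $\gamma_{k-1}=\gamma_k$ for some $k$, so the task is to solve \eqref{eq:Generalized Hamiltonian Equation} there. Writing $\tilde\Omega_N$ in the block form \eqref{eq:Symplectic Form Blocks} (note $\tilde\Delta_N$ is antisymmetric and $\tilde M_N$ symmetric, so $\tilde\Omega_N^T(\alpha,\beta)^T=(-\tilde\Delta_N\alpha-\tilde M_N\beta,\ \tilde M_N\alpha)^T$) and $d\tilde E_N^T$ in the form \eqref{eq:dE}, I would test the ansatz $\alpha=\dot q$. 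Its lower block gives $\tilde M_N\dot q$, which matches the lower block of $d\tilde E_N^T$ automatically, and the system collapses to the single block equation $\tilde M_N\beta=-(\xi+\tilde\Delta_N\dot q)$. Since $\tilde M_N$ is symmetric, $\text{Range }\tilde M_N=(\ker\tilde M_N)^\perp$, so a solution $\beta$ (hence a vector field $Z$) exists, and the singularity is geometric, exactly when $\xi+\tilde\Delta_N\dot q\perp\ker\tilde M_N$.

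The next step is to pin down $\ker\tilde M_N$. Because $\tfrac12\dot q^T\tilde M_N\dot q$ is the nonnegative kinetic energy, $\tilde M_N$ is positive semidefinite, and $v\in\ker\tilde M_N$ iff the piecewise-linear material velocity $\varphi_t-\gamma_k X_t$ built from $v$ vanishes on every cell. Matching the two nodal values at each interior node shows that $\ker\tilde M_N$ is spanned by the ``gauge'' vectors $u^{(k)}$, one for each index with $\gamma_{k-1}=\gamma_k$, whose only nonzero entries are $\dot X_k=1$ and $\dot y_k=\gamma_k$. Geometrically $u^{(k)}$ slides node $k$ along the locally straight graph of $\phi$, which is exactly the degeneracy described in Remark II. Thus it remains to verify $\langle u^{(k)},\,\xi+\tilde\Delta_N\dot q\rangle=0$ for each such $k$.

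Before computing, I would simplify this pairing. Since $u^{(k)}\in\ker\tilde M_N$, pairing against the Euler--Lagrange expression annihilates the acceleration term $\tilde M_N\ddot q$, so $\langle u^{(k)},\,\xi+\tilde\Delta_N\dot q\rangle$ equals, up to sign, the quadratic-in-$\dot q$ form $\tfrac12\dot q^T(D_{u^{(k)}}\tilde M_N)\dot q-(u^{(k)})^T(D_{\dot q}\tilde M_N)\dot q$, where $D$ denotes directional differentiation in the configuration variables. Crucially, the potential ($R$) terms in $\xi$ from \eqref{eq:Ksi1}--\eqref{eq:Ksi2} drop out entirely: the discrete potential equals the reparametrization-invariant physical energy $\int_0^{X_{max}}R\,dX$, and $u^{(k)}$ fixes the graph of $\phi$, so $D_{u^{(k)}}$ of the potential vanishes. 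Moreover, sliding node $k$ preserves both adjacent slopes, $D_{u^{(k)}}\gamma_{k-1}=D_{u^{(k)}}\gamma_k=0$, so $D_{u^{(k)}}\tilde M_N$ only feels the cell-length changes $D_{u^{(k)}}\delta_{k-1}=+1$ and $D_{u^{(k)}}\delta_k=-1$. This localizes the whole identity to the blocks $A_{k-1},B_{k-1},A_k,B_k,A_{k+1}$ of the mass matrix and to $\Gamma_k,\Lambda_{k-1},\Lambda_k$ in \eqref{eq:Gamma Lambda Blocks}, reducing it to a short check after substituting $\gamma_{k-1}=\gamma_k$.

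The main obstacle is this last verification. Even though it is localized to three nodes and free of the potential, the cancellation genuinely mixes the antisymmetric ``Coriolis'' contribution $(u^{(k)})^T\tilde\Delta_N\dot q$ against the inhomogeneous part $(u^{(k)})^T\xi$, and it only materializes after the collinearity $\gamma_{k-1}=\gamma_k$ is used to collapse the discontinuous induced variation of Remark I (eq.~\eqref{eq:Discontinuous variation}) into an admissible, finite-element-representable one. Arranging the bookkeeping so that the $\tilde\Delta_N\dot q$ and $\xi$ terms line up and cancel is where essentially all the computational work lies; everything preceding it is structural and follows from the block forms already established.
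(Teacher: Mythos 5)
Your proposal follows essentially the same route as the paper's proof: set $\alpha=\dot q$ using the block form of $\tilde\Omega_N^T$, reduce to $\tilde M_N\beta=-\xi-\tilde\Delta_N\dot q$, and verify that the right-hand side satisfies the compatibility condition forced by the rank deficiency at $\gamma_{k-1}=\gamma_k$; your orthogonality-to-$\ker\tilde M_N$ criterion is exactly the paper's scaling condition $(\mathrm{RHS})_{k,2}=-\gamma_k(\mathrm{RHS})_{k,1}$, because the kernel vector $u^{(k)}$ with entries $(\gamma_k,1)$ in the $k$-th block is, by symmetry of $\tilde M_N$, the left-null covector encoding the row dependence \eqref{eq:Linear dependence of kth block row}. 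The paper likewise only asserts the final computational verification rather than displaying it, so your deferral of that step is consistent with it, and your observations that the potential terms drop out by reparametrization invariance and that $D_{u^{(k)}}\gamma_{k-1}=D_{u^{(k)}}\gamma_k=0$ are correct and genuinely shorten that remaining check.
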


\begin{proof}
Suppose that the mass matrix (and thus the symplectic form) is singular at $(q^{(0)},\dot q^{(0)})$. Using the block structures \eqref{eq:Symplectic Form Blocks} and \eqref{eq:dE} we can write \eqref{eq:Generalized Hamiltonian Equation} as the system

\begin{align}
\label{eq:Split Generalized Hamiltonian Equations}
-\tilde \Delta_N(q^{(0)},\dot q^{(0)})\, \alpha - \tilde M_N(q^{(0)})\, \beta &= \xi, \nonumber \\
\tilde M_N(q^{(0)}) \,\alpha & = \tilde M_N(q^{(0)})\,\dot q^{(0)}.
\end{align}

\noindent
The second equation implies that there exists a solution $\alpha = \dot q^{(0)}$. In fact this is the only solution we are interested in, since it satisfies the second order condition: the Euler-Lagrange equations underlying the variationl principle are second order, so we are only interested in solutions of the form $Z = \sum_{i=1}^{2N} \dot q_i \,\partial/\partial q_i + \beta_i \, \partial / \partial \dot q_i$. The first equation can be rewritten as

\begin{align}
\label{eq:Equation for beta}
\tilde M_N(q^{(0)})\, \beta &= -\xi - \tilde \Delta_N(q^{(0)},\dot q^{(0)})\, \dot q^{(0)}.
\end{align}

\noindent
Since the mass matrix is singular, we must have $\gamma_{k-1}=\gamma_k$ for some $k$. As we saw in Section~\ref{subsec:Invertibility}, this means that the two rows of the $k^{\text{th}}$ \textquoteleft block row' of the mass matrix (i.e., the rows containing the blocks $B_{k-1}$, $A_k$ and $B_k$) are not linearly independent. In fact we have

\begin{equation}
\label{eq:Linear dependence of kth block row}
(B_{k-1})_{2*} = -\gamma_k (B_{k-1})_{1*}, \quad\quad\quad(A_k)_{2*} = -\gamma_k (A_k)_{1*}, \quad\quad\quad(B_k)_{2*} = -\gamma_k (B_k)_{1*},
\end{equation}

\noindent
where $a_{m*}$ denotes the $m^\textrm{th}$ row of the matrix $a$. Equation \eqref{eq:Equation for beta} will have a solution for $\beta$ iff the RHS satisfies a similar scaling condition in the the $k^{\text{th}}$ \textquoteleft block element'. Using formulas \eqref{eq:Gamma Lambda Blocks}, \eqref{eq:Ksi1} and \eqref{eq:Ksi2}, we show that $-\xi - \tilde \Delta_N\, \dot q^{(0)}$ indeed has this property. Hence, $d\tilde E_N^T(q^{(0)},\dot q^{(0)}) \in \text{Range } \tilde \Omega_N^T (q^{(0)},\dot q^{(0)})$ and $(q^{(0)},\dot q^{(0)})$ is a geometric singularity. Moreover, since $\gamma_{k-1}=\gamma_k$ defines a hypersurface in $\mathbb{R}^{2 N} \times \mathbb{R}^{2 N}$, $(q^{(0)},\dot q^{(0)})$ is a limit of regular points. 
\end{proof}

\paragraph{Remark I.} Numerical time integration of the semi-discrete equations of motion (\ref{eq:Generalized Hamiltonian Equation}) has to deal with the singularity points of the symplectic form. While there are some numerical algorithms allowing one to get past singular hypersurfaces (see \cite{Rabier6}), it might not be very practical from the application point of view. Note that, unlike in the continuous case, the time evolution of the meshpoints $X_i$'s is governed by the equations of motion, so the user does not have any influence on how the mesh is adapted. More importantly, there is no built-in mechanism that would prevent mesh tangling. Some preliminary numerical experiments show that the mesh points eventually collapse when started with nonzero initial velocities.

\paragraph{Remark II.} The singularities of the mass matrix \eqref{eq:MassMatrix} bear some similarities to the singularities of the mass matrices encountered in the Moving Finite Element method. In \cite{Miller1} and \cite{Miller2} the authors proposed introducing a small \textquoteleft internodal' viscosity which penalizes the method for relative motion between the nodes and thus regularizes the mass matrix. A similar idea could be applied in our case: one could add some small $\varepsilon$ kinetic terms to the Lagrangian \eqref{eq:ParticularLN} in order to regularize the Legendre Transform. In light of the remark made above, we did not follow this idea further and decided to take a different route instead, as described in the following sections. However, investigating further similarities between our variational approach and the Moving Finite Element method might be worthwhile. There also might be some connection to the $r$-adaptive method presented in \cite{Zielonka}: the evolution of the mesh in that method is also set by the equations of motion, although the authors considered a different variational principle and different theoretical reasoning to justify the validity of their approach.

\subsection{Constraints and adaptation strategy}
\label{subsec:Constraints and adaptation strategy}

As we saw in Section~\ref{subsec:Existence and Uniqueness}, upon discretization we lose the arbitrariness of $X(x,t)$ and the evolution of $X_i(t)$ is governed by the equations of motion, while we still want to be able to select a desired mesh adaptation strategy, like \eqref{eq:ArclengthConstraint}. This could be done by augmenting the Lagrangian \eqref{eq:ParticularLN} with Lagrange multipliers corresponding to each constraint $g_i$. However, it is not obvious that the dynamics of the constrained system as defined would reflect in any way the behavior of the approximated system \eqref{eq:ParticularDensity}. We will show that the constraints can be added via Lagrange multipliers already at the continuous level \eqref{eq:ParticularDensity} and the continuous system as defined can be then discretized to arrive at \eqref{eq:ParticularLN} with the desired adaptation constraints.

\subsubsection{Global constraint}
\label{subsubsec:Global constraint}

As mentioned before, eventually we would like to impose the constraints 

\begin{equation}
\label{eq:constraints discrete}
g_i(y_1,...,y_N,X_1,...,X_N)=0 \quad\quad\quad i=1,...,N
\end{equation}

\noindent
on the semi-discrete system \eqref{eq:ParticularLN}. Let us assume that $g: \mathbb{R}^{2N}\longrightarrow \mathbb{R}^N$, $g = (g_1,...,g_N)^T$ is $C^1$ and $0$ is a regular value of $g$, so that \eqref{eq:constraints discrete} defines a submanifold. To see how these constraints can be introduced at the continuous level, let us select uniformly distributed points $x_i=i\cdot \Delta x$, $i=0,...,N+1$, $\Delta x = X_{max}/(N+1)$ and demand that the constraints

\begin{equation}
\label{eq:constraints continuous}
g_i\Big(\varphi(x_1,t),...,\varphi(x_N,t),X(x_1,t),...,X(x_N,t)\Big)=0, \quad\quad\quad i=1,...,N
\end{equation}

\noindent
be satisfied by $\varphi(x,t)$ and $X(x,t)$. One way of imposing these constraints is solving the system

\begin{align}
\label{eq:Adaptation Approach 1}
&\delta_1 \tilde S[\varphi,X] \cdot \delta \varphi(x,t) = 0 \quad\quad \text{for all }\delta \varphi(x,t),\\
&g_i\Big(\varphi(x_1,t),...,\varphi(x_N,t),X(x_1,t),...,X(x_N,t)\Big)=0, \quad\quad\quad i=1,...,N. \nonumber
\end{align} 

\noindent
This system consists of one Euler-Lagrange equation that corresponds to extremizing $\tilde S$ with respect to $\varphi$ (we saw in Section~\ref{subsec:reparametrized lagrangian 2} that the other Euler-Lagrange equation is not independent) and a set of constraints enforced at some pre-selected points $x_i$. Note, that upon finite element discretization on a mesh coinciding with the pre-selected points this system reduces to the approach presented in Section~\ref{sec:approach1}: we minimize the discrete action with respect to the $y_i$'s only and supplement the resulting equations with the constraints \eqref{eq:constraints discrete}.

Another way that we want to explore consists in using Lagrange multipliers. Define the auxiliary action functional

\begin{equation}
\label{eq:Auxiliary action}
\tilde S_C[\varphi,X,\lambda_k]=\tilde S[\varphi,X] - \sum_{i=1}^N \int_0^{T_{max}} \lambda_i(t)\cdot g_i\Big(\varphi(x_1,t),...,\varphi(x_N,t),X(x_1,t),...,X(x_N,t)\Big) \, dt.
\end{equation}

\noindent
We will assume that the Lagrange multipliers $\lambda_i(t)$ are at least continuous in time. According to the method of Lagrange multipliers,  we seek the stationary points of $\tilde S_C$. This leads to the following system of equations

\begin{align}
\label{eq:Adaptation Approach 2}
\delta_1 \tilde S[\varphi,X] \cdot \delta \varphi(x,t) - \sum_{i=1}^N \sum_{j=1}^N \int_0^{T_{max}} \lambda_i(t)\, \frac{\partial g_i}{\partial y_j} \,\delta \varphi(x_j,t) \,dt&= 0 \quad\quad\quad \text{for all }\delta \varphi(x,t),\nonumber\\
\delta_2 \tilde S[\varphi,X] \cdot \delta X(x,t) - \sum_{i=1}^N \sum_{j=1}^N \int_0^{T_{max}} \lambda_i(t)\, \frac{\partial g_i}{\partial X_j} \,\delta X(x_j,t) \,dt&= 0 \quad\quad\quad \text{for all }\delta X(x,t),\nonumber \\
g_i\Big(\varphi(x_1,t),...,\varphi(x_N,t),X(x_1,t),...,X(x_N,t)\Big)&=0, \quad\quad\:\: i=1,...,N,
\end{align}

\noindent
where for clarity we suppressed writing the arguments of $\frac{\partial g_i}{\partial y_j}$ and $\frac{\partial g_i}{\partial X_j}$.

Equation \eqref{eq:Adaptation Approach 1} is more intuitive, because we directly use the arbitrariness of $X(x,t)$ and simply restrict it further by imposing constraints. It is not immediately obvious how solutions of \eqref{eq:Adaptation Approach 1} and \eqref{eq:Adaptation Approach 2} relate to each other. We would like both systems to be \textquoteleft equivalent' in some sense, or at least their solution sets to overlap. Let us investigate this issue in more detail.

Suppose $(\varphi,X)$ satisfy \eqref{eq:Adaptation Approach 1}. Then it is quite trivial to see that $(\varphi, X, \lambda_1,...,\lambda_N)$ such that $\lambda_k \equiv 0$ satisfy \eqref{eq:Adaptation Approach 2}: the second equation is implied by the first one and the other equations coincide with those of \eqref{eq:Adaptation Approach 1}. At this point it should be obvious that system \eqref{eq:Adaptation Approach 2} may have more solutions for $\varphi$ and $X$ than system \eqref{eq:Adaptation Approach 1}.

\begin{prop}
\label{Thm: Zero Lagrange multipliers}
The only solutions $(\varphi, X, \lambda_1,...,\lambda_N)$ to \eqref{eq:Adaptation Approach 2} that satisfy \eqref{eq:Adaptation Approach 1} as well are those with $\lambda_k \equiv 0$ for all $k$.
\end{prop}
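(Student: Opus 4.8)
The plan is to substitute the known vanishing of the field Euler--Lagrange expressions into both variational equations of \eqref{eq:Adaptation Approach 2}, thereby isolating the purely multiplier-dependent terms, and then to invoke the regularity of the constraint map to force the multipliers to vanish.

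First I would use the hypothesis that $(\varphi,X)$ also solves \eqref{eq:Adaptation Approach 1}. Its first line gives $\delta_1\tilde S[\varphi,X]\cdot\delta\varphi=0$ for all $\delta\varphi$, and by Proposition~\ref{Thm: E-L equations dependence} this in turn forces $\delta_2\tilde S[\varphi,X]\cdot\delta X=0$ for all $\delta X$ as well. Feeding these two facts into the first and second equations of \eqref{eq:Adaptation Approach 2} annihilates the action terms and leaves the pure multiplier identities
\begin{align}
\label{eq:multiplier identities}
\sum_{i=1}^N\sum_{j=1}^N \int_0^{T_{max}} \lambda_i(t)\,\frac{\partial g_i}{\partial y_j}\,\delta\varphi(x_j,t)\,dt &= 0 \quad\text{for all }\delta\varphi,\\
\sum_{i=1}^N\sum_{j=1}^N \int_0^{T_{max}} \lambda_i(t)\,\frac{\partial g_i}{\partial X_j}\,\delta X(x_j,t)\,dt &= 0 \quad\text{for all }\delta X. \nonumber
\end{align}

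Next I would exploit the freedom in the nodal values of the variations. Choosing $\delta\varphi(x,t)=\sum_{j=1}^N\mu_j(t)\,\eta_j(x)$ with the hat functions \eqref{eq:basis functions} gives $\delta\varphi(x_j,t)=\mu_j(t)$ for arbitrary, independent $\mu_j(t)$, and likewise for $\delta X$. Selecting the $\mu_j$ one at a time and applying the fundamental lemma of the calculus of variations in $t$ yields, for every $t$,
\begin{align}
\label{eq:pointwise multiplier}
\sum_{i=1}^N \frac{\partial g_i}{\partial y_j}\,\lambda_i(t) = 0, \qquad \sum_{i=1}^N \frac{\partial g_i}{\partial X_j}\,\lambda_i(t) = 0, \qquad j=1,\dots,N.
\end{align}
Writing $Dg$ for the $N\times 2N$ Jacobian of the constraint map with respect to $(y,X)$, the relations \eqref{eq:pointwise multiplier} are exactly the statement $(Dg)^T\lambda(t)=0$.

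Finally I would close the argument with the standing hypothesis that $0$ is a regular value of $g$. Because the solution satisfies $g_i=0$, the Jacobian $Dg$ is evaluated on the constraint submanifold, where it has full row rank $N$; hence its transpose $(Dg)^T$ has trivial kernel, so $(Dg)^T\lambda(t)=0$ forces $\lambda(t)=0$ for every $t$, i.e. $\lambda_k\equiv0$ for all $k$. The one genuinely load-bearing step is the implication $\delta_1\tilde S=0\Rightarrow\delta_2\tilde S=0$ furnished by Proposition~\ref{Thm: E-L equations dependence}: without it the second equation of \eqref{eq:Adaptation Approach 2} would not collapse to a pure multiplier identity, and only the $y$-block of $(Dg)^T\lambda=0$ would be available --- which is insufficient, since $\partial g/\partial y$ alone can be singular even when the full $Dg$ has rank $N$ (as happens, for instance, for the arclength constraint \eqref{eq:ArclengthConstraint} on a locally flat field).
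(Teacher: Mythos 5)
Your proof is correct and follows essentially the same route as the paper's: substitute the vanishing of $\delta_1\tilde S$ and (via Proposition~\ref{Thm: E-L equations dependence}) of $\delta_2\tilde S$ into \eqref{eq:Adaptation Approach 2}, localize the variations at a single node with an arbitrary time profile to obtain $(Dg)^T\lambda(t)=0$ pointwise, and conclude from the regular-value hypothesis that $Dg$ has full row rank, forcing $\lambda\equiv 0$. The only cosmetic difference is that the paper extracts a nonsingular $N\times N$ submatrix $\Xi$ of $Dg$ rather than speaking of the trivial kernel of $(Dg)^T$, which is the same argument.
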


\begin{proof}
Suppose $(\varphi, X, \lambda_1,...,\lambda_N)$ satisfy both \eqref{eq:Adaptation Approach 1} and \eqref{eq:Adaptation Approach 2}. System \eqref{eq:Adaptation Approach 1} implies that $\delta_1 \tilde S \cdot \delta \varphi=0$ and $\delta_2 \tilde S \cdot \delta X=0$. Using this in system \eqref{eq:Adaptation Approach 2} gives

\begin{align}
\label{eq:Proving lambda=0 - variational equation}
\sum_{j=1}^N \int_0^{T_{max}}dt \, \delta \varphi(x_j,t) \, \sum_{i=1}^N \lambda_i(t)\, \frac{\partial g_i}{\partial y_j}&= 0 \quad\quad\quad \text{for all }\delta \varphi(x,t),\nonumber\\
\sum_{j=1}^N \int_0^{T_{max}}dt \, \delta X(x_j,t) \, \sum_{i=1}^N \lambda_i(t)\, \frac{\partial g_i}{\partial X_j}&= 0 \quad\quad\quad \text{for all }\delta X(x,t).
\end{align}

\noindent
In particular, this has to hold for variations $\delta \varphi$ and $\delta X$ such that $\delta \varphi(x_j,t) = \delta X(x_j,t) = \nu(t) \cdot \delta_{kj}$, where $\nu(t)$ is an arbitrary continuous function of time. If we further assume that for all $x \in [0,X_{max}]$ the functions $\varphi(x,.)$ and $X(x,.)$ are continuous, both $\sum_{i=1}^N \lambda_i(t)\, \frac{\partial g_i}{\partial y_k}$ and $\sum_{i=1}^N \lambda_i(t)\, \frac{\partial g_i}{\partial X_k}$ are continuous and we get

\begin{equation}
\label{eq:Proving lambda=0 - matrix equation}
Dg\Big(\varphi(x_1,t),...,\varphi(x_N,t),X(x_1,t),...,X(x_N,t)\Big)^T \cdot \lambda(t) = 0
\end{equation}

\noindent
for all $t$, where $\lambda = (\lambda_1, ...,\lambda_N)^T$ and the $N \times 2N$ matrix $Dg = \Big[\frac{\partial g_i}{\partial y_k} \, \frac{\partial g_i}{\partial X_k}\Big]_{i,k=1,...,N}$ is the derivative of $g$. Since we assumed that $0$ is a regular value of $g$ and the constraint $g=0$ is satisfied by $\varphi$ and $X$, we have that for all $t$ the matrix $Dg$ has full rank---that is, there exists a nonsingular $N \times N$ submatrix $\Xi$. Then the equation $\Xi^T \lambda(t)=0$ implies $\lambda \equiv 0$.
\end{proof}

We see that considering Lagrange multipliers in \eqref{eq:Auxiliary action} makes sense at the continuous level. We can now perform a finite element discretization. The auxiliary Lagrangian $\tilde L_C:Q \times G \times W \times Z \times \mathbb{R}^N \longrightarrow \mathbb{R}$ corresponding to \eqref{eq:Auxiliary action} can be written as

\begin{equation}
\label{eq:Auxiliary Lagrangian}
\tilde L_C[\varphi,X,\varphi_t,X_t,\lambda_k]=\tilde L[\varphi,X,\varphi_t,X_t] - \sum_{i=1}^N\lambda_i\cdot g_i\Big(\varphi(x_1),...,\varphi(x_N),X(x_1),...,X(x_N)\Big),
\end{equation}

\noindent
where $\tilde L$ is the Lagrangian of the unconstrained theory and has been defined by \eqref{eq:Lagrangian2}. Let us choose a uniform mesh coinciding with the pre-selected points $x_i$. As in Section~\ref{subsec:FEM2}, we consider the restriction $\tilde L_{CN} = \tilde L_C |_{Q_N \times G_N \times W_N \times Z_N \times \mathbb{R}^N}$ and we get

\begin{equation}
\label{eq:LCN}
\tilde L_{CN}(y_i,X_j,\dot y_k,\dot X_l,\lambda_m) = \tilde L_{N}(y_i,X_j,\dot y_k,\dot X_l)-\sum_{i=1}^N \lambda_i\cdot g_i(y_1,...,y_N,X_1,...,X_N).
\end{equation}

\noindent
We see that the semi-discrete Lagrangian $\tilde L_{CN}$ is obtained from the semi-discrete Lagrangian $\tilde L_{N}$ by adding the constraints $g_i$ directly at the semi-discrete level, which is exactly what we set out to do at the beginning of this section. However, in the semi-discrete setting we cannot expect the Lagrange multipliers to vanish for solutions of interest. This is because there is no semi-discrete counterpart of Proposition~\ref{Thm: Zero Lagrange multipliers}. On one hand, the semi-discrete version of \eqref{eq:Adaptation Approach 1} (that is, the approach presented in Section~\ref{sec:approach1}) does not imply that $\delta_2 \tilde S \cdot \delta X =0$, so the above proof will not work. On the other hand, if we supplement \eqref{eq:Adaptation Approach 1} with the equation corresponding to variations of $X$, then the finite element discretization will not have solutions, unless the constraint functions are integrals of motion of the system described by $\tilde L_{N}(y_i,X_j,\dot y_k,\dot X_l)$, which generally is not the case. Nonetheless, it is reasonable to expect that if the continuous system \eqref{eq:Adaptation Approach 1} has a solution, then the Lagrange multipliers of the semi-discrete system \eqref{eq:LCN} should remain small. 

Defining constraints by Equations \eqref{eq:constraints continuous} allowed us to use the same finite element discretization for both $\tilde L$ and the constraints, and to prove some correspondence between the solutions of (\ref{eq:Adaptation Approach 1}) and (\ref{eq:Adaptation Approach 2}). However, constraints \eqref{eq:constraints continuous} are global in the sense that they depend on the values of the fields $\varphi$ and $X$ at different points in space. Moreover, these constraints do not determine unique solutions to (\ref{eq:Adaptation Approach 1}) and (\ref{eq:Adaptation Approach 2}), which is a little cumbersome when discussing multisymplecticity (see Section~\ref{sec:multisymplecticity}).

\subsubsection{Local constraint}
\label{subsubsec:Local constraint}

In Section~\ref{subsec:MMPDEs} we discussed how some adaptation constraints of interest can be derived from certain partial differential equations based on the equidistribution principle, for instance equation \eqref{eq:MMPDE0 - arclength}. We can view these PDEs as local constraints that only depend on pointwise values of the fields $\varphi$, $X$ and their spatial derivatives. Let $G=G(\varphi,X,\varphi_x,X_x,\varphi_{xx},X_{xx},...)$ represent such a local constraint. Then, similarly to \eqref{eq:Adaptation Approach 1}, we can write our control-theoretic strategy from Section~\ref{sec:approach1} as

\begin{align}
\label{eq:Adaptation Approach 1 - local constraint}
&\delta_1 \tilde S[\varphi,X] \cdot \delta \varphi(x,t) = 0 \quad\quad \text{for all }\delta \varphi(x,t),\\
&G(\varphi,X,\varphi_x,X_x,\varphi_{xx},X_{xx},...)=0. \nonumber
\end{align} 

\noindent
Note that higher order derivatives of the fields may require the use of higher degree basis functions than the ones in \eqref{eq:basis functions}, or of finite differences instead. 

The Lagrange multiplier approach consists in defining the auxiliary Lagrangian

\begin{equation}
\label{eq:Auxiliary Lagrangian - approach 2}
\tilde L_C[\varphi,X,\varphi_t,X_t,\lambda]=\tilde L[\varphi,X,\varphi_t,X_t] - \int_0^{X_{max}} \lambda(x)\cdot G(\varphi,X,\varphi_x,X_x,\varphi_{xx},X_{xx},...)\,dx.
\end{equation}

\noindent
Suppose that the pair $(\varphi,X)$ satisfies \eqref{eq:Adaptation Approach 1 - local constraint}. Then, much like in Section~\ref{subsubsec:Global constraint}, one can easily check that the triple $(\varphi,X,\lambda \equiv 0)$ satisfies the Euler-Lagrange equations associated with \eqref{eq:Auxiliary Lagrangian - approach 2}. However, an analog of Proposition~\ref{Thm: Zero Lagrange multipliers} does not seem to be very interesting in this case, therefore we are not proving it here.

Introducing the constraints this way is convenient, because the Lagrangian \eqref{eq:Auxiliary Lagrangian - approach 2} then represents a constrained multisymplectic field theory with a local constraint, which makes the analysis of multisymplecticity easier (see Section~\ref{sec:multisymplecticity}). The disadvantage is that discretization of \eqref{eq:Auxiliary Lagrangian - approach 2} requires mixed methods. We will use the linear finite elements \eqref{eq:basis functions} to discretize $\tilde L[\varphi,X,\varphi_t,X_t]$, but the constraint term will be approximated via finite differences. This way we again obtain the semi-discrete Lagrangian \eqref{eq:LCN}, where $g_i$ represents the discretization of $G$ at the point $x=x_i$.

In summary, the methods presented in Section~\ref{subsubsec:Global constraint} and Section~\ref{subsubsec:Local constraint} both lead to the same semi-discrete Lagrangian, but have different theoretical advantages.

\subsection{DAE formulation of the equations of motion}
\label{subsec:DAE formulations of the equations of motion}

The Lagrangian \eqref{eq:LCN} can be written as

\begin{equation}
\label{eq:LCN2}
\tilde L_{CN}(q,\dot q,\lambda)= \frac{1}{2} \dot q^T \tilde M_N(q) \, \dot q - R_N(q) - \lambda^T g(q),
\end{equation}

\noindent
where

\begin{equation}
\label{eq:RN}
R_N(q) = \sum_{k=0}^N \int_{x_k}^{x_{k+1}} R\Big( \gamma_k,y_k \eta_k(x)+y_{k+1} \eta_{k+1}(x)\Big)\frac{X_{k+1}-X_k}{\Delta x} \, dx.
\end{equation}

\noindent
The Euler-Lagrange equations thus take the form

\begin{align}
\label{eq:E-L equations - index 3}
\dot q &= u, \nonumber \\
\tilde M_N(q) \, \dot u &= f(q,u) - Dg(q)^T \,\lambda, \nonumber \\
g(q) &= 0,
\end{align}

\noindent
where

\begin{equation}
\label{eq:f(q)}
f_k(q,u) = -\frac{\partial R_N}{\partial q_k} + \sum_{i,j=1}^{2N} \Big(\frac{1}{2} \frac{\partial (\tilde M_N)_{ij}}{\partial q_k} - \frac{\partial (\tilde M_N)_{ki}}{\partial q_j} \Big) u_i u_j.
\end{equation}

\noindent
System \eqref{eq:E-L equations - index 3} is to be solved for the unknown functions $q(t)$, $u(t)$ and $\lambda(t)$. This is a DAE system of index 3, since we are lacking a differential equation for $\lambda(t)$ and the constraint equation has to be differentiated three times in order to express $\dot \lambda$ as a function of $q$, $u$ and $\lambda$, provided that certain regularity conditions are satisfied. Let us determine these conditions. Differentiate the constraint equation with respect to time twice to obtain the acceleration level constraint

\begin{equation}
\label{eq:Acceleration level constraint}
Dg(q) \, \dot u = h(q,u),
\end{equation}

\noindent
where

\begin{equation}
\label{eq:h(q,u) term}
h_k(q,u) = -\sum_{i,j=1}^{2N} \frac{\partial^2 g_k}{\partial q_i \partial q_j} u_i u_j.
\end{equation}

\noindent
We can then write \eqref{eq:Acceleration level constraint} and the second equation of \eqref{eq:E-L equations - index 3} together as

\begin{equation}
\label{eq:Solving for the Lagrange multipliers}
\left(\begin{array}{cc}
 \tilde M_N(q) & Dg(q)^T  \\
 Dg(q) & 0 
\end{array} \right)
\left(\begin{array}{c}
 \dot u\\
 \lambda 
\end{array} \right)=
\left(\begin{array}{c}
 f(q,u)\\
 h(q,u) 
\end{array} \right).
\end{equation}

\noindent
If we could solve this equation for $\dot u$ and $\lambda$ in terms of $q$ and $u$, then we could simply differentiate the expression for $\lambda$ one more time to obtain the missing differential equation, thus showing system \eqref{eq:E-L equations - index 3} is of index 3. System \eqref{eq:Solving for the Lagrange multipliers} is solvable if its matrix is invertible. Hence, for system \eqref{eq:E-L equations - index 3} to be of index 3 the following condition

\begin{equation}
\label{eq:Condition for index 3}
\det
\left(\begin{array}{cc}
 \tilde M_N(q) & Dg(q)^T  \\
 Dg(q) & 0 
\end{array} \right)
\not = 0
\end{equation}

\noindent
has to be satisfied for all $q$ or at least in a neighborhood of the points satisfying $g(q)=0$. Note that with suitably chosen constraints this condition allows the mass matrix to be singular.

We would like to perform time integration of this mechanical system using the symplectic (variational) Lobatto IIIA-IIIB quadratures for constrained systems (see \cite{HLWGeometric}, \cite{HWODE2}, \cite{JayLobatto}, \cite{JaySPARK}, \cite{MarsdenWestVarInt}, \cite{LeimkuhlerSkeel1994}, \cite{LeimkuhlerReichBook}, \cite{LeyendeckerMarsden2008}). However, due to the singularity of the Runge-Kutta coefficient matrices $(a_{ij})$ and $(\bar a_{ij})$ for the Lobatto IIIA and IIIB schemes, the assumption \eqref{eq:Condition for index 3} does not guarantee that these quadratures define a unique numerical solution: the mass matrix would need to be invertible. To circumvent this numerical obstacle we resort to a trick described in \cite{JaySPARK}. We embed our mechanical system in a higher dimensional configuration space by adding slack degrees of freedom $r$ and $\dot r$ and form the augmented Lagrangian $\tilde L_{N}^A$ by modifying the kinetic term of $\tilde L_{N}$ to read

\begin{equation}
\label{eq:Augmented Lagrangian Big Mass Matrix}
\tilde L_N^A(q,r,\dot q,\dot r) = \frac{1}{2}
\left(\begin{array}{cc}
\dot q^T & \dot r^T\\ 
\end{array} \right) \cdot
\left(\begin{array}{cc}
 \tilde M_N(q) & Dg(q)^T  \\
 Dg(q) & 0 
\end{array} \right) \cdot
\left(\begin{array}{c}\dot q\\\dot r\end{array} \right)- R_N(q).
\end{equation}

\noindent
Assuming \eqref{eq:Condition for index 3}, the augmented system has a non-singular mass matrix. If we multiply out the terms we obtain simply

\begin{equation}
\label{eq:Augmented Lagrangian General}
\tilde L_N^A(q,r,\dot q,\dot r) = \tilde L_N(q,\dot q)+\dot r^T Dg(q) \,\dot q.
\end{equation}

\noindent
This formula in fact holds for general Lagrangians, not only for \eqref{eq:ParticularLN}. In addition to $g(q)=0$ we further impose the constraint $r=0$. Then the augmented constrained Lagrangian takes the form

\begin{align}
\label{eq:Constrained Augmented Lagrangian General}
\tilde L_{CN}^A(q,r,\dot q,\dot r,\lambda,\mu)= \tilde L_{N}(q,\dot q) +\dot r^T Dg(q) \,\dot q - \lambda^T g(q) - \mu^T r.
\end{align}

\noindent
The corresponding Euler-Lagrange equations are

\begin{align}
\label{eq:E-L equations - augmented}
\dot q &= u, \nonumber \\
\dot r &= w, \nonumber \\
\tilde M_N(q) \, \dot u + Dg(q)^T \, \dot w &= f(q,u) - Dg(q)^T \,\lambda, \nonumber \\
Dg(q) \, \dot u &= h(q,u)-\mu, \nonumber \\
g(q) &= 0, \nonumber \\
r &= 0.
\end{align}

\noindent
It is straightforward to verify that $r(t)=0$, $w(t)=0$, $\mu(t)=0$ is the exact solution and the remaining equations reduce to \eqref{eq:E-L equations - index 3}, that is, the evolution of the augmented system coincides with the evolution of the original system, by construction. The advantage is that the augmented system is now regular and we can readily apply the Lobatto IIIA-IIIB method for constrained systems to compute a numerical solution. It should be intuitively clear that this numerical solution will approximate the solution of \eqref{eq:E-L equations - index 3} as well. What is not immediately obvious is whether a variational integrator based on \eqref{eq:Augmented Lagrangian General} can be interpreted as a variational integrator based on $\tilde L_N$. This can be elegantly justified with the help of exact constrained discrete Lagrangians. Let $\mathcal{N} \subset Q_N \times G_N$ be the constraint submanifold defined by $g(q)=0$. The exact constrained discrete Lagrangian $\tilde L_N^{C,E}:\mathcal{N} \times \mathcal{N} \longrightarrow \mathbb{R}$ is defined by

\begin{equation}
\label{eq:Exact Constrained Discrete Lagrangian}
\tilde L_N^{C,E} \big(q^{(1)},q^{(2)}\big) = \int_0^{\Delta t} \tilde L_N\big(q(t),\dot q(t)\big) \, dt,
\end{equation}

\noindent
where $q(t)$ is the solution to the constrained Euler-Lagrange equations \eqref{eq:E-L equations - index 3} such that it satisfies the boundary conditions $q(0)=q^{(1)}$ and $q(\Delta t)=q^{(2)}$. Note that $\mathcal{N}\times\{0\}\subset (Q_N \times G_N)\times \mathbb{R}^N$ is the constraint submanifold defined by $g(q)=0$ and $r=0$. Since necessarily $r^{(1)}=r^{(2)}=0$, we can define the exact augmented constrained discrete Lagrangian $\tilde L_N^{A,C,E}:\mathcal{N} \times \mathcal{N} \longrightarrow \mathbb{R}$ by

\begin{equation}
\label{eq:Exact Augmented Constrained Discrete Lagrangian}
\tilde L_N^{A,C,E} \big(q^{(1)},q^{(2)}\big) = \int_0^{\Delta t} \tilde L_N^A\big(q(t),r(t),\dot q(t),\dot r(t)\big) \, dt,
\end{equation}

\noindent
where $q(t)$, $r(t)$ are the solutions to the augmented constrained Euler-Lagrange equations \eqref{eq:E-L equations - augmented} such that the boundary conditions $q(0)=q^{(1)}$, $q(\Delta t)=q^{(2)}$ and $r(0)=r(\Delta t)=0$ are satisfied.

\begin{prop}
The exact discrete Lagrangians $\tilde L_N^{A,C,E}$ and $\tilde L_N^{C,E}$ are equal.
\end{prop}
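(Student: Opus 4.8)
The plan is to exploit the explicit form of the augmented solution already identified in the discussion following \eqref{eq:E-L equations - augmented}. Along the exact solution of the augmented constrained Euler--Lagrange equations one has $r(t)\equiv 0$, $w(t)\equiv 0$ and $\mu(t)\equiv 0$, and the remaining equations collapse precisely onto the original constrained system \eqref{eq:E-L equations - index 3}. Since $r(t)\equiv 0$ forces $\dot r(t)\equiv 0$, the augmentation term in \eqref{eq:Augmented Lagrangian General} vanishes identically along this solution, so that
\begin{equation}
\tilde L_N^A\big(q(t),r(t),\dot q(t),\dot r(t)\big)=\tilde L_N\big(q(t),\dot q(t)\big)+\dot r(t)^T Dg\big(q(t)\big)\,\dot q(t)=\tilde L_N\big(q(t),\dot q(t)\big), \nonumber
\end{equation}
where the last equality uses $\dot r(t)\equiv 0$.

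First I would argue that the $q$-component of the augmented trajectory coincides with the curve appearing in \eqref{eq:Exact Constrained Discrete Lagrangian}. Both are governed by the same constrained dynamics \eqref{eq:E-L equations - index 3}, and both are subject to the identical boundary data $q(0)=q^{(1)}$, $q(\Delta t)=q^{(2)}$ with $q^{(1)},q^{(2)}\in\mathcal N$; the supplementary conditions $r(0)=r(\Delta t)=0$ are automatically consistent with $r\equiv 0$. Hence, under the well-posedness of the constrained two-point boundary value problem, the two $q(t)$ are the same curve. With the integrands shown to agree pointwise on $[0,\Delta t]$, the definitions \eqref{eq:Exact Constrained Discrete Lagrangian} and \eqref{eq:Exact Augmented Constrained Discrete Lagrangian} then yield equal integrals, establishing $\tilde L_N^{A,C,E}=\tilde L_N^{C,E}$.

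The only genuine subtlety---and the step I would treat most carefully---is the identification of the two $q(t)$ trajectories, i.e. uniqueness for the constrained boundary value problem. This is precisely the standing hypothesis underlying the very definition of an exact discrete Lagrangian (that for $q^{(1)}$, $q^{(2)}$ sufficiently close and $\Delta t$ sufficiently small the boundary value problem admits a unique solution), so it requires no new work beyond invoking that assumption. Everything else reduces to the immediate observation that the augmentation term is linear in $\dot r$, which vanishes on the exact solution; thus the result is essentially a direct consequence of the construction of $\tilde L_N^A$ in \eqref{eq:Constrained Augmented Lagrangian General} rather than of any nontrivial computation.
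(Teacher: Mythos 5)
Your proposal is correct and follows essentially the same route as the paper: observe that on the augmented solution $r(t)\equiv 0$, hence $\dot r\equiv 0$, so the augmentation term $\dot r^T Dg(q)\,\dot q$ in \eqref{eq:Augmented Lagrangian General} vanishes and the two integrands agree pointwise along the (common) trajectory $q(t)$. The only difference is that you make explicit the uniqueness of the constrained two-point boundary value problem needed to identify the two $q(t)$ curves, a point the paper leaves implicit; this is a reasonable bit of added care but not a different argument.
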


\begin{proof}
Let $q(t)$ and $r(t)$ be the solutions to \eqref{eq:E-L equations - augmented} such that the boundary conditions $q(0)=q^{(1)}$, $q(\Delta t)=q^{(2)}$ and $r(0)=r(\Delta t)=0$ are satisfied. As argued before, we in fact have $r(t)=0$ and $q(t)$ satisfies \eqref{eq:E-L equations - index 3} as well. By \eqref{eq:Augmented Lagrangian General} we have

\begin{equation*}
\tilde L_N^A\big(q(t),r(t),\dot q(t),\dot r(t)\big)=\tilde L_N\big(q(t),\dot q(t)\big)
\end{equation*}

\noindent
for all $t\in[0,\Delta t]$, and consequently $\tilde L_N^{A,C,E}=\tilde L_N^{C,E}$.\\
\end{proof}

\noindent
This means that any discrete Lagrangian $\tilde L_d:(Q_N \times G_N)\times \mathbb{R}^N \times (Q_N \times G_N)\times \mathbb{R}^N\longrightarrow\mathbb{R}$ that approximates $\tilde L_N^{A,C,E}$ to order $s$ also approximates $\tilde L_N^{C,E}$ to the same order, that is, a variational integrator for \eqref{eq:E-L equations - augmented}, in particular our Lobatto IIIA-IIIB scheme, is also a variational integrator for \eqref{eq:E-L equations - index 3}.

\paragraph{Backward error analysis.} The advantage of the Lagrange multiplier approach is the fact that upon spatial discretization we deal with a constrained mechanical system. Backward error analysis of symplectic/variational numerical schemes for such systems shows that the modified equations also describe a constrained mechanical system for a nearby Hamiltonian (see Theorem~5.6 in Section~IX.5.2 of \cite{HLWGeometric}). Therefore, we expect the Lagrange multiplier strategy to demonstrate better performance in terms of energy conservation than the control-theoretic strategy. The Lagrange multiplier approach makes better use of the geometry underlying the field theory we consider, the key idea being to \emph{treat the reparametrization field $X(x,t)$ as an additional dynamical degree of freedom on equal footing with $\varphi(x,t)$}.

 \section{Multisymplectic field theory formalism}
\label{sec:multisymplecticity}

In Section~\ref{sec:approach1} and Section~\ref{sec:approach2} we took the view of infinite dimensional manifolds of fields as configuration spaces and presented a way to construct space-adaptive variational integrators in that formalism. We essentially applied symplectic integrators to semi-discretized Lagrangian field theories. In this section we show how $r$-adaptive integrators can be described in the more general framework of multisymplectic geometry. In particular we show that some of the integrators obtained in the previous sections can be interpreted as multisymplectic variational integrators. Multisymplectic geometry provides a covariant formalism for the study of field theories in which time and space are treated on equal footing, as a conseqence of which multisymplectic variational integrators allow for more general discretizations of spacetime, such that, for instance, each element of space may be integrated with a different timestep (see \cite{LewAVI}). For the convenience of the reader, below we briefly review some background material and provide relevant references for further details. We then proceed to reformulate our adaptation strategies in the language of multisymplectic field theory.

\subsection{Background material}
\label{subsec: multisymp background}

\subsubsection*{Lagrangian mechanics and Veselov-type discretizations}

Let $Q$ be the configuration manifold of a certain mechanical system and $TQ$ its tangent bundle. Denote the coordinates on $Q$ by $q^i$, and on $TQ$ by $(q^i, \dot q^i)$, where $i=1,2,...,n$. The system is described by defining the Lagrangian $L:TQ \longrightarrow \mathbb{R}$ and the corresponding action functional $S[q(t)] = \int_a^b L\big(q^i(t),\dot q^i(t)\big)\, dt$. The dynamics is obtained through Hamilton's principle, which seeks the curves $q(t)$ for which the functional $S[q(t)]$ is stationary under variations of $q(t)$ with fixed endpoints, i.e. we seek $q(t)$ such that

\begin{align}
\label{eq:Hamilton's principle}
dS[q(t)] \cdot \delta q(t)=\frac{d}{d\epsilon} \bigg|_{\epsilon=0}S[q_\epsilon(t)]=0
\end{align}

\noindent
for all $\delta q(t)$ with $\delta q(a)=\delta q(b)=0$, where $q_\epsilon (t)$ is a smooth family of curves satisfying $q_0=q$ and $\frac{d}{d\epsilon} \big|_{\epsilon=0} q_\epsilon = \delta q$. By using integration by parts, the Euler-Lagrange equations follow as

\begin{align}
\label{eq:Euler-Lagrange Equations}
\frac{\partial L}{\partial q^i} -\frac{d}{dt} \frac{\partial L}{\partial \dot q^i}=0.
\end{align}

\noindent
The canonical symplectic form $\Omega$ on $T^*Q$, the $2n$-dimensional cotangent bundle of $Q$, is given by $\Omega = dq^i \wedge dp_i$, where summation over $i$ is implied and $(q^i,p_i)$ are the canonical coordinates on $T^*Q$. The Lagrangian defines the Legendre transformation $\mathbb{F}L: TQ \longrightarrow T^*Q$, which in coordinates is given by $(q^i,p_i)=(q^i,\frac{\partial L}{\partial \dot q^i})$. We then define the Lagrange 2-form on $TQ$ by pulling back the canonical symplectic form, i.e. $\Omega_L = \mathbb{F}L^* \Omega$. If the Legendre transformation is a local diffeomorphism, then $\Omega_L$ is a symplectic form. The Lagrange vector field is a vector field $X_E$ on $TQ$ that satisfies $X_E \lrcorner \Omega_L=dE$, where the energy $E$ is defined by $E(v_q)=\mathbb{F}L(v_q)\cdot v_q - L(v_q)$ and $\lrcorner$ denotes the interior product, i.e. the contraction of a differential form with a vector field. It can be shown that the flow $F_t$ of this vector field preserves the symplectic form, that is, $F_t^* \Omega_L = \Omega_L$. The flow $F_t$ is obtained by solving the Euler-Lagrange equations \eqref{eq:Euler-Lagrange Equations}.

For a Veselov-type discretization we essentially replace $TQ$ with $Q \times Q$, which serves as a discrete approximation of the tangent bundle. We define a discrete Lagrangian $L_d$ as a smooth map $L_d: Q \times Q \longrightarrow \mathbb{R}$ and the corresponding discrete action $S=\sum_{k=0}^{N-1} L_d(q_k,q_{k+1})$. The variational principle now seeks a sequence $q_0$, $q_1$, $...$, $q_N$ that extremizes $S$ for variations holding the endpoints $q_0$ and $q_N$ fixed. The Discrete Euler-Lagrange equations follow

\begin{equation}
\label{eq:Discrete Euler-Lagrange equations}
D_2L_d(q_{k-1},q_k) + D_1 L_d(q_k,q_{k+1}) = 0.
\end{equation}

\noindent
This implicitly defines a discrete flow $F: Q \times Q \longrightarrow Q \times Q$ such that $F(q_{k-1},q_k)=(q_k,q_{k+1})$. One can define the discrete Lagrange 2-form on $Q \times Q$ by $\omega_L = \frac{\partial^2 L_d}{\partial q_0^i \partial q_1^j} dq_0^i \wedge dq_1^j$, where $(q_0^i,q_1^j)$ denotes the coordinates on $Q \times Q$. It then follows that the discrete flow $F$ is symplectic, i.e. $F^* \omega_L = \omega_L$.

Given a continuous Lagrangian system with $L:TQ \longrightarrow \mathbb{R}$ one chooses a corresponding discrete Lagrangian as an approximation $L_d(q_k,q_{k+1}) \approx \int_{t_k}^{t_{k+1}} L\big(q(t),\dot q(t)\big)\,dt$, where $q(t)$ is the solution of the Euler-Lagrange equations corresponding to $L$ with the boundary values $q(t_k)=q_k$ and $q(t_{k+1})=q_{k+1}$. 

For more details regarding Lagrangian mechanics, variational principles, and symplectic geometry, see \cite{MarsdenRatiuSymmetry}. Discrete Mechanics and variational integrators are discussed in \cite{MarsdenWestVarInt}.

\subsubsection*{Multisymplectic geometry and Lagrangian field theory}

Let $\mathcal{X}$ be an oriented manifold representing the $(n+1)$-dimensional spacetime with local coordinates $(x^0,x^1,\ldots,x^n)\equiv (t,x)$, where $x^0 \equiv t$ is time and $(x^1,\ldots,x^n)\equiv x$ are space coordinates. Physical fields are sections of a configuration fiber bundle $\pi_{\mathcal{X}Y}:Y\longrightarrow \mathcal{X}$, that is, continuous maps $\phi:\mathcal{X}\longrightarrow Y$ such that $\pi_{\mathcal{X}Y}\circ \phi  = \mathrm{id}_\mathcal{X}$. This means that for every $(t,x) \in \mathcal{X}$, $\phi(t,x)$ is in the fiber over $(t,x)$, which is $Y_{(t,x)}=\pi_{\mathcal{X}Y}^{-1}((t,x))$. The evolution of the field takes place on the first jet bundle $J^1 Y$, which is the analog of $TQ$ for mechanical systems. $J^1 Y$ is defined as the affine bundle over $Y$ such that for $y\in Y_{(t,x)}$ the fiber $J^1_y Y$ consists of linear maps $\vartheta: T_{(t,x)} \mathcal{X}\rightarrow T_y Y$ satisfying the condition $T\pi_{\mathcal{X}Y}\circ \vartheta = \mathrm{id}_{T_{(t,x)}\mathcal{X}}$. The local coordinates $(x^\mu,y^a)$ on $Y$ induce the coordinates $(x^\mu,y^a, v^a_{\phantom{a}\mu})$ on $J^1 Y$. Intuitively, the first jet bundle consists of the configuration bundle $Y$, and of the first partial derivatives of the field variables with respect to the independent variables. Let $\phi(x^0,\ldots,x^n)=(x^0,\ldots,x^n, y^1,\ldots,y^m)$ in coordinates and let $v^a_{\phantom{a}\mu}=y^a_{\phantom{a},\mu}=\partial y^a / \partial x^\mu$ denote the partial derivatives. We can think of $J^1 Y$ as a fiber bundle over $\mathcal{X}$. Given a section $\phi:\mathcal{X}\longrightarrow Y$, we can define its first jet prolongation $j^1 \phi: \mathcal{X}\longrightarrow J^1 Y$, in coordinates given by $j^1\phi(x^0,x^1,\ldots,x^n)=(x^0,x^1,\ldots,x^n,y^1,\ldots,y^m,y^1_{\phantom{1},0},\ldots,y^m_{\phantom{m},n})$, which is a section of the fiber bundle $J^1 Y$ over $\mathcal{X}$. For higher order field theories we consider higher order jet bundles, defined iteratively by $J^2 Y = J^1(J^1 Y)$ and so on. The local coordinates on $J^2 Y$ are denoted $(x^\mu,y^a, v^a_{\phantom{a}\mu},w^a_{\phantom{a}\mu},\kappa^a_{\mu\nu})$. The second jet prolongation $j^2\phi:\mathcal{X}\longrightarrow J^2 Y$ is given in coordinates by $j^2 \phi(x^\mu)=(x^\mu,y^a,y^a_{\phantom{a},\mu},y^a_{\phantom{a},\mu},y^a_{\phantom{a},\mu,\nu})$.

Lagrangian density for first order field theories is defined as a map $\mathcal{L}:J^1 Y \longrightarrow \mathbb{R}$. The corresponding action functional is $S[\phi]=\int_\mathcal{U} \mathcal{L}(j^1 \phi)\,d^{n+1}x$, where $\mathcal{U}\subset\mathcal{X}$. Hamilton's principle seeks fields $\phi(t,x)$ that extremize $S$, that is

\begin{equation}
\label{eq:HamiltonPrincipleForFields}
\frac{d}{d\lambda} \bigg|_{\lambda=0} S[\eta_Y^\lambda \circ \phi]=0
\end{equation}

\noindent
for all $\eta_Y^\lambda$ that keep the boundary conditions on $\partial \mathcal{U}$ fixed, where $\eta_Y^\lambda:Y\longrightarrow Y$ is the flow of a vertical vector field $V$ on $Y$. This leads to the Euler-Lagrange equations

\begin{equation}
\label{eq:Euler-Lagrange Equations for Fields}
\frac{\partial\mathcal{L}}{\partial y^a}(j^1 \phi) - \frac{\partial}{\partial x^\mu} \bigg(\frac{\partial \mathcal{L}}{\partial v^a_{\phantom{a}\mu}}(j^1 \phi)\bigg)=0.
\end{equation}

\noindent
Given the Lagrangian density $\mathcal{L}$ one can define the Cartan $(n+1)$-form $\Theta_\mathcal{L}$ on $J^1 Y$, in local coordinates given by $\Theta_\mathcal{L} = \frac{\partial \mathcal{L}}{\partial v^a_{\phantom{a}\mu}}dy^a \wedge d^n x_\mu + (\mathcal{L}-\frac{\partial \mathcal{L}}{\partial v^a_{\phantom{a}\mu}} v^a_{\phantom{a}\mu}) d^{n+1}x$, where $d^n x_\mu = \partial_\mu \,\lrcorner\, d^{n+1}x$. The multisymplectic $(n+2)$-form is then defined by $\Omega_{\mathcal{L}}=-d\Theta_\mathcal{L}$. Let $\mathcal{P}$ be the set of solutions of the Euler-Lagrange equations, that is, the set of sections $\phi$ satisfying \eqref{eq:HamiltonPrincipleForFields} or \eqref{eq:Euler-Lagrange Equations for Fields}. For a given $\phi \in \mathcal{P}$, let $\mathcal{F}$ be the set of first variations, that is, the set of vector fields $V$ on $J^1 Y$ such that $(t,x)\rightarrow \eta^\epsilon_Y\circ \phi(t,x)$ is also a solution, where $\eta^\epsilon_Y$ is the flow of $V$. The multisymplectic form formula states that if $\phi \in \mathcal{P}$ then for all $V$ and $W$ in $\mathcal{F}$, 

\begin{equation}
\label{eq:MultisymplecticFormFormula}
\int_{\partial \mathcal{U}} (j^1 \phi)^* \big(j^1 V \, \lrcorner \, j^1 W \, \lrcorner \, \Omega_{\mathcal{L}}\big) = 0,
\end{equation}

\noindent
where $j^1V$ is the jet prolongation of $V$, that is, the vector field on $J^1 Y$ in local coordinates given by $j^1 V = (V^\mu,V^a,\frac{\partial V^a}{\partial x^\mu}+\frac{\partial V^a}{\partial y^b} v^b_{\phantom{a}\mu} - v^a_{\phantom{a}\nu} \frac{\partial V^\nu}{\partial x^\mu})$, where $V=(V^\mu,V^a)$ in local coordinates. The multisymplectic form formula is the multisymplectic counterpart of the fact that in finite-dimensional mechanics, the flow of a mechanical system consists of symplectic maps.

For a $k^{\textrm{th}}$-order Lagrangian field theory with the Lagrangian density $\mathcal{L}:J^kY\longrightarrow \mathbb{R}$, analogous geometric structures are defined on $J^{2k-1}Y$. In particular, for a second-order field theory the multisymplectic $(n+2)$-form $\Omega_\mathcal{L}$ is defined on $J^3 Y$ and a similar multisymplectic form formula can be proven. If the Lagrangian density does not depend on the second order time derivatives of the field, it is convenient to define the subbundle $J^2_0 Y \subset J^2 Y$ such that $J^2_0 Y = \{\vartheta \in J^2 Y \, | \, \kappa^a_{00}=0 \}$.

For more information about the geometry of jet bundles, see \cite{Saunders}. The multisymplectic formalism in field theory is discussed in \cite{GIMMSY}. The multisymplectic form formula for first-order field theories is derived in \cite{MarsdenPatrickShkoller}, and generalized for second-order field theories in \cite{KouranbaevaShkoller}. Higher order field theory is considered in \cite{GotayHighOrderJet}.

\subsubsection*{Multisymplectic variational integrators}
Veselov-type discretization can be generalized to multisymplectic field theory. We take $\mathcal{X}=\mathbb{Z}\times\mathbb{Z} = \{(j,i)\}$, where for simplicity we consider $\dim \mathcal{X} =2$, i.e. $n=1$. The configuration fiber bundle is $Y=\mathcal{X} \times \mathscr{F}$ for some smooth manifold $\mathscr{F}$. The fiber over $(j,i)\in\mathcal{X}$ is denoted $Y_{ji}$ and its elements $y_{ji}$. A rectangle $\square$ of $\mathcal{X}$ is an ordered 4-tuple of the form $\square=\big( (j,i),(j,i+1),(j+1,i+1),(j+1,i) \big) = (\square^1,\square^2,\square^3,\square^4)$. The set of all rectangles in $\mathcal{X}$ is denoted $\mathcal{X}^\square$. A point $(j,i)$ is touched by a rectangle if it is a vertex of that rectangle. Let $\mathcal{U} \subset \mathcal{X}$. Then $(j,i) \in \mathcal{U}$ is an interior point of $\mathcal{U}$ if $\mathcal{U}$ contains all four rectangles that touch $(j,i)$. The interior $\textrm{int}\,\mathcal{U}$ is the set of all interior points of $\mathcal{U}$. The closure $\textrm{cl}\,\mathcal{U}$ is the union of all rectangles touching interior points of $\mathcal{U}$. The boundary of $\mathcal{U}$ is defined by $\partial\mathcal{U}=(\mathcal{U} \cap \textrm{cl}\,\mathcal{U})\backslash \textrm{int}\,\mathcal{U}$. A section of $Y$ is a map $\phi: \mathcal{U}\subset \mathcal{X} \rightarrow Y$ such that $\phi(j,i) \in Y_{ji}$. We can now define the discrete first jet bundle of $Y$ as

\begin{align}
\label{eq:DiscreteFirstJetBundle}
J^1Y &= \big\{(y_{ji},y_{j\,i+1},y_{j+1\,i+1},y_{j+1\,i})\,\, \big| \,\, (j,i) \in \mathcal{X},\, y_{ji},y_{j\,i+1},y_{j+1\,i+1},y_{j+1\,i} \in \mathscr{F} \big\} \nonumber \\ 
     &=\mathcal{X}^\square \times \mathscr{F}^4.
\end{align}

\noindent
Intuitively, the discrete first jet bundle is the set of all rectangles together with four values assigned to their vertices. Those four values are enough to approximate the first derivatives of a smooth section with respect to time and space using, for instance, finite differences. The first jet prolongation of a section $\phi$ of $Y$ is the map $j^1 \phi: \mathcal{X}^\square \rightarrow J^1 Y$ defined by $j^1 \phi (\square) = (\square,\phi(\square^1),\phi(\square^2),\phi(\square^3),\phi(\square^4))$. For a vector field $V$ on $Y$, let $V_{ji}$ be its restriction to $Y_{ji}$. Define a discrete Lagrangian $L:J^1Y\rightarrow \mathbb{R}$, $L=L(y_1,y_2,y_3,y_4)$, where for convenience we omit writing the base rectangle. The associated discrete action is given by 

\begin{equation*}
S[\phi]=\sum_{\square \subset \mathcal{U}} L \circ j^1 \phi(\square). 
\end{equation*}

\noindent
The discrete variational principle seeks sections that extremize the discrete action, that is, mappings $\phi(j,i)$ such that

\begin{equation}
\label{eq:DiscreteVariationalPrincipleForFields}
\frac{d}{d \lambda} \bigg|_{\lambda=0} S[\phi_\lambda]=0
\end{equation}

\noindent
for all vector fields $V$ on $Y$ that keep the boundary conditions on $\partial \mathcal{U}$ fixed, where $\phi_\lambda(j,i)=F^{V_{ji}}_\lambda (\phi(j,i))$ and $F^{V_{ji}}_\lambda$ is the flow of $V_{ji}$ on $\mathscr{F}$. This is equivalent to the discrete Euler-Lagrange equations

\begin{align}
\label{eq:DELF}
\frac{\partial L}{\partial y_1} (y_{ji},y_{j\,i+1}&,y_{j+1\,i+1},y_{j+1\,i}) + \frac{\partial L}{\partial y_2} (y_{j\,i-1},y_{ji},y_{j+1\,i},y_{j+1\,i-1}) + \nonumber\\
&+ \frac{\partial L}{\partial y_3} (y_{j-1\,i-1},y_{j-1\,i},y_{ji},y_{j\,i-1}) + \frac{\partial L}{\partial y_4} (y_{j-1\,i},y_{j-1\,i+1},y_{j\,i+1},y_{ji}) = 0
\end{align}

\noindent
for all $(j,i)\in\textrm{int}\,\mathcal{U}$, where we adopt the convention $\phi(j,i)=y_{ji}$. In analogy to the Veselov discretization of mechanics, we can define four 2-forms $\Omega^l_L$ on $J^1Y$, where $l=1,2,3,4$ and $\Omega^1_L+\Omega^2_L+\Omega^3_L+\Omega^4_L=0$, that is, only three 2-forms of these forms are independent. The 4-tuple $(\Omega^1_L,\Omega^2_L,\Omega^3_L,\Omega^4_L)$ is the discrete analog of the multisymplectic form $\Omega_\mathcal{L}$. We refer the reader to the literature for details, e.g. \cite{MarsdenPatrickShkoller}. By analogy to the continuous case, let $\mathcal{P}$ be the set of solutions of the discrete Euler-Lagrange equations \eqref{eq:DELF}. For a given $\phi \in \mathcal{P}$, let $\mathcal{F}$ be the set of first variations, that is, the set of vector fields $V$ on $J^1 Y$ defined similarly as in the continuous case. The discrete multisymplectic form formula then states that if $\phi \in \mathcal{P}$ then for all $V$ and $W$ in $\mathcal{F}$,

\begin{equation}
\label{eq:Discrete Multisymplectic Form Formula}
\sum_{\substack{\square \\[0.7mm] \scriptscriptstyle \square \cap \mathcal{U} \not = \emptyset}} \Bigg( \sum_{\substack{l\\[0.3mm] \scriptscriptstyle \square^l \in \partial \mathcal{U}}} \Big[ (j^1 \phi)^* (j^1 V \, \lrcorner \, j^1 W \, \lrcorner \, \Omega^l_L)\Big] (\square) \Bigg)=0,
\end{equation}

\noindent
where the jet prolongations are defined to be 

\begin{equation}
j^1 V (y_{\square^1},y_{\square^2},y_{\square^3},y_{\square^4}) = \big( V_{\square^1}(y_{\square^1}),V_{\square^2}(y_{\square^2}),V_{\square^3}(y_{\square^3}),V_{\square^4}(y_{\square^4}) \big).
\end{equation}

\noindent
The discrete form formula \eqref{eq:Discrete Multisymplectic Form Formula} is in direct analogy to the multisymplectic form formula \eqref{eq:MultisymplecticFormFormula} that holds in the continuous case.

Given a continuous Lagrangian density $\mathcal{L}$ one chooses a corresponding discrete Lagrangian as an approximation $L(y_{\square^1},y_{\square^2},y_{\square^3},y_{\square^4}) \approx \int_{\overline{\square}} \mathcal{L}\circ j^1 \bar \phi \,dx \,dt$, where $\overline{\square}$ is the rectangular region of the continuous spacetime that contains $\square$ and $\bar \phi (t,x)$ is the solution of the Euler-Lagrange equations corresponding to $\mathcal{L}$ with the boundary values at the vertices of $\square$ corresponding to $y_{\square^1}$, $y_{\square^2}$, $y_{\square^3}$, and $y_{\square^4}$. 

The discrete second jet bundle $J^2 Y$ can be defined by considering ordered 9-tuples 

\begin{align}
\label{eq:9-tuple}
\boxplus=\big( (j-1,i-1),(j-1,i),\, &(j-1,i+1),(j,i-1),\nonumber\\
               &(j,i),(j,i+1),(j+1,i-1),(j+1,i),(j+1,i+1) \big)\nonumber\\
               =(\boxplus^1,\boxplus^2,\boxplus^3,\boxplus^4,\boxplus^5,\boxplus^6,&\boxplus^7,\boxplus^8,\boxplus^9)
\end{align}

\noindent
instead of rectangles $\square$, and the discrete subbundle $J^2_0 Y$ can be defined by considering 6-tuples

\begin{align}
\label{eq:6-tuple}
\boxbar&=\big( (j,i-1),(j,i),(j,i+1),(j+1,i+1),(j+1,i),(j+1,i-1) \big)\nonumber\\
       &=(\boxbar^1,\boxbar^2,\boxbar^3,\boxbar^4,\boxbar^5,\boxbar^6).
\end{align}

\noindent
Similar constructions then follow and a similar discrete multisymplectic form formula can be derived for a second order field theory.

Multisymplectic variational integrators for first order field theories are introduced in \cite{MarsdenPatrickShkoller}, and generalized for second-order field theories in \cite{KouranbaevaShkoller}.

\subsection{Analysis of the control-theoretic approach}
\label{subsec: multisymp approach1}

\subsubsection*{Continuous setting}

We now discuss a multisymplectic setting for the approach presented in Section~\ref{sec:approach1}. Let the computational spacetime be $\mathcal{X}=\mathbb{R} \times \mathbb{R}$ with coordinates $(t,x)$ and consider the trivial configuration bundle $Y=\mathcal{X}\times \mathbb{R}$ with coordinates $(t,x,y)$. Let $\mathcal{U}=[0,T_{max}]\times[0,X_{max}]$ and let our scalar field be represented by a section $\tilde \varphi: \mathcal{U} \longrightarrow Y$ with the coordinate representation $\tilde \varphi(t,x)=(t,x,\varphi(t,x))$. Let $(t,x,y,v_t,v_x)$ denote local coordinates on $J^1 Y$. In these coordinates the first jet prolongation of $\tilde \varphi$ is represented by $j^1 \tilde\varphi(t,x) = (t,x,\varphi(t,x), \varphi_t(t,x), \varphi_x(t,x))$. Then the Lagrangian density \eqref{eq:LagrangianDensity} can be viewed as a mapping $\tilde{\mathcal{L}}: J^1Y \longrightarrow \mathbb{R}$. The corresponding action \eqref{eq:action2} can now be expressed as

\begin{equation}
\label{eq:ActionMultisymplectic}
\tilde S [\tilde \varphi] = \int_\mathcal{U} \tilde{\mathcal{L}} \big(j^1 \tilde\varphi\big)\,dt \wedge dx,
\end{equation}

\noindent
Just like in Section~\ref{sec:approach1}, let us for the moment assume that the function $X: \mathcal{U}\longrightarrow[0,X_{max}]$ is known, so that we can view $\tilde{\mathcal{L}}$ as being time and space dependent. The dynamics is obtained by extremizing $\tilde S$ with respect to $\tilde \varphi$, that is, by solving for $\tilde \varphi$ such that

\begin{equation}
\label{eq:Multisymplectic Extremization of S tilde}
\frac{d}{d\lambda} \bigg|_{\lambda=0} \tilde S[\eta_Y^\lambda \circ \tilde \varphi]=0
\end{equation}

\noindent
for all $\eta_Y^\lambda$ that keep the boundary conditions on $\partial \mathcal{U}$ fixed, where $\eta_Y^\lambda:Y\longrightarrow Y$ is the flow of a vertical vector field $V$ on $Y$. Therefore, for an \emph{a priori} known $X(t,x)$ the multisymplectic form formula \eqref{eq:MultisymplecticFormFormula} is satisfied for solutions of \eqref{eq:Multisymplectic Extremization of S tilde}.

Consider the additional bundle $\pi_{\mathcal{X}\mathcal{B}}:\mathcal{B}=\mathcal{X}\times[0,X_{max}] \longrightarrow \mathcal{X}$ whose sections $\tilde X:\mathcal{U}\longrightarrow \mathcal{B}$ represent our diffeomorphisms. Let $\tilde X(t,x) = (t,x, X(t,x))$ denote a local coordinate representation and assume $X(t,.)$ is a diffeomorphism. Then define $\tilde Y = Y \oplus \mathcal{B}$. We have $J^k \tilde Y \cong J^k Y\oplus J^k \mathcal{B}$. In Section~\ref{subsubsec:Local constraint} we argued that the moving mesh partial differential equation \eqref{eq:MMPDE0} can be interpreted as a local constraint on the fields $\tilde \varphi$, $\tilde X$ and their spatial derivatives. This constraint can be represented by a function $G:J^k \tilde Y \longrightarrow \mathbb{R}$. Sections $\tilde \varphi$ and $\tilde X$ satisfy the constraint if $G(j^k \tilde \varphi, j^k \tilde X)=0$. Therefore our control-theoretic strategy expressed in equations \eqref{eq:Adaptation Approach 1 - local constraint} can be rewritten as

\begin{align}
\label{eq:Multisymplectic control-theoretic strategy}
\frac{d}{d\lambda} \bigg|_{\lambda=0} \tilde S[\eta_Y^\lambda \circ \tilde \varphi]&=0, \nonumber \\
G(j^k \tilde \varphi, j^k \tilde X)&=0,
\end{align}

\noindent
for all $\eta_Y^\lambda$, similarly as above. Let us argue how to interpret the notion of multisymplecticity for this problem. Intuitively, multisymplecticity should be understood in a sense similar to Proposition~\ref{Thm: DAE vs Hamiltonian ODE}. We first solve the problem \eqref{eq:Multisymplectic control-theoretic strategy} for $\tilde \varphi$ and $\tilde X$, given some initial and boundary conditions. Then we substitute this $\tilde X$ into the problem \eqref{eq:Multisymplectic Extremization of S tilde}. Let $\mathcal{P}$ be the set of solutions to this problem. Naturally, $\tilde \varphi \in \mathcal{P}$. The multisymplectic form formula \eqref{eq:MultisymplecticFormFormula} will be satisfied for all fields in $\mathcal{P}$, but the constraint $G=0$ will be satisfied only for $\tilde \varphi$.

\subsubsection*{Discretization}

Discretize the computational spacetime $\mathbb{R} \times \mathbb{R}$ by picking the discrete set of points $t_j = j \cdot \Delta t$, $x_i = i \cdot \Delta x$, and define $\mathcal{X} = \{(j,i)\, |\, j,i \in \mathbb{Z}\}$. Let $\mathcal{X}^\square$ and $\mathcal{X}^\boxbar$ be the set of rectangles and 6-tuples in $\mathcal{X}$, respectively. The discrete configuration bundle is $Y=\mathcal{X} \times \mathbb{R}$ and for convenience of notation let the elements of the fiber $Y_{ji}$ be denoted by $y^j_i$. Let $\mathcal{U}=\{(j,i)\, |\, j=0,1,\ldots,M+1, \, i=0,1,\ldots,N+1\}$, where $\Delta x = X_{max}/(N+1)$ and $\Delta t = T_{max}/(M+1)$. Suppose we have a discrete Lagrangian $\tilde L: J^1 Y \longrightarrow \mathbb{R}$ and the corresponding discrete action $\tilde S$ that approximates \eqref{eq:ActionMultisymplectic}, where we assume that $X(t,x)$ is known and of the form \eqref{eq:X FEM}. A variational integrator is obtained by solving

\begin{equation}
\label{eq:DiscreteVariationalPrincipleForApproach1}
\frac{d}{d \lambda} \bigg|_{\lambda=0} \tilde S[\tilde \varphi_\lambda]=0
\end{equation}

\noindent
for a discrete section $\tilde \varphi : \mathcal{U} \longrightarrow Y$, as described in Section~\ref{subsec: multisymp background}. This integrator is multisymplectic, i.e. the discrete multisymplectic form formula \eqref{eq:Discrete Multisymplectic Form Formula} is satisfied.

\paragraph{Example: Midpoint rule.} In \eqref{eq:PRK for ODE} consider the 1-stage symplectic partitioned Runge-Kutta method with the coefficients $a_{11}=\bar a_{11}=c_1=1/2$ and $b_1=\bar b_1 = 1$. This method is often called the \emph{midpoint rule} and is a 2-nd order member of the Gauss family of quadratures. It can be easily shown (see \cite{HLWGeometric}, \cite{MarsdenWestVarInt}) that the discrete Lagrangian \eqref{eq:DiscreteLagrangianRK} for this method is given by

\begin{equation}
\label{eq: Discrete Lagrangian for the Midpoint Rule - definition}
\tilde{L}_d(t_j,y^j,t_{j+1},y^{j+1})=\Delta t \cdot \tilde{L}_N \bigg( \frac{y^j+y^{j+1}}{2},\frac{y^{j+1}-y^j}{\Delta t},t_j+\frac{1}{2}\Delta t \bigg),
\end{equation} 

\noindent
where $\Delta t = t_{j+1}-t_{j}$ and $y^j=(y^j_1,\ldots,y^j_N)$. Using \eqref{eq:Lagrangian} and \eqref{eq:LN FEM} we can write

\begin{equation}
\label{eq: Discrete Lagrangian for the Midpoint Rule in terms of L tilde}
\tilde{L}_d(t_j,y^j,t_{j+1},y^{j+1}) = \sum_{i=0}^N \tilde{L} \big( y^{j}_{i},y^{j}_{i+1},y^{j+1}_{i+1},y^{j+1}_{i} \big),
\end{equation}

\noindent
where we defined the discrete Lagrangian $\tilde L: J^1 Y \longrightarrow \mathbb{R}$ by the formula

\begin{equation}
\label{eq: Discrete Multisymplectic Lagrangian for the Midpoint Rule}
\tilde{L}(y^{j}_{i},y^{j}_{i+1},y^{j+1}_{i+1},y^{j+1}_{i})=\Delta t \int_{x_i}^{x_{i+1}} \tilde{\mathcal{L}}\bigg(\bar \varphi(x),\bar \varphi_x(x),\bar \varphi_t(x), x,t_j+\frac{1}{2}\Delta t  \bigg)\,dx
\end{equation}

\noindent
with

\begin{align}
\label{eq: Auxiliary terms for Discrete Lagrangian for the Midpoint Rule in terms of L tilde }
\bar \varphi(x) &= \frac{y^j_i+y^{j+1}_i}{2} \eta_i(x)+\frac{y^j_{i+1}+y^{j+1}_{i+1}}{2} \eta_{i+1}(x), \nonumber \\
\bar \varphi_x(x) &= \frac{1}{2}\frac{y^{j}_{i+1}-y^{j}_{i}}{\Delta x} + \frac{1}{2}\frac{y^{j+1}_{i+1}-y^{j+1}_{i}}{\Delta x}, \nonumber \\
\bar \varphi_t(x) &= \frac{y^{j+1}_i-y^j_i}{\Delta t} \eta_i(x)+\frac{y^{j+1}_{i+1}-y^j_{i+1}}{\Delta t} \eta_{i+1}(x).
\end{align}

\noindent
Given the Lagrangian density $\tilde{\mathcal{L}}$ as in \eqref{eq:LagrangianDensity}, and assuming $X(t,x)$ is known, one can evaluate the integral in \eqref{eq: Discrete Multisymplectic Lagrangian for the Midpoint Rule} explicitly. It is now a straightforward calculation to show that the discrete variational principle \eqref{eq:DiscreteVariationalPrincipleForApproach1} for the discrete Lagrangian $\tilde{L}$ as defined is equivalent to the Discrete Euler-Lagrange equations \eqref{eq:Discrete Euler-Lagrange equations} for $\tilde{L}_d$, and consequently to \eqref{eq:PRK for ODE}.

This shows that the 2-nd order Gauss method applied to \eqref{eq:PRK for ODE} defines a multisymplectic method in the sense of formula \eqref{eq:Discrete Multisymplectic Form Formula}. However, for other symplectic partitioned Runge-Kutta methods of interest to us, namely the 4-th order Gauss and the 2-nd/4-th order Lobatto IIIA-IIIB methods, it is not possible to isolate a discrete Lagrangian $\tilde L$ that would only depend on four values $y^{j}_{i}$, $y^{j}_{i+1}$, $y^{j+1}_{i+1}$, $y^{j+1}_{i}$. The mentioned methods have more internal stages, and the equations \eqref{eq:PRK for ODE} couple them in a nontrivial way. Effectively, at any given time step the internal stages depend on all the values $y^{j}_{1}$, \ldots, $y^{j}_{N}$ and $y^{j+1}_{1}$, \ldots, $y^{j+1}_{N}$, and it it not possible to express the discrete Lagrangian \eqref{eq:DiscreteLagrangianRK} as a sum similar to \eqref{eq: Discrete Lagrangian for the Midpoint Rule in terms of L tilde}. The resulting integrators are still variational, since they are derived by applying the discrete variational principle \eqref{eq:DiscreteVariationalPrincipleForApproach1} to some discrete action $\tilde S$, but this action cannot be expressed as the sum of $\tilde L$ over all rectangles. Therefore, these integrators are not multisymplectic, at least not in the sense of formula \eqref{eq:Discrete Multisymplectic Form Formula}.

\paragraph{Constraints.} Let the additional bundle be $\mathcal{B}=\mathcal{X}\times[0,X_{max}]$ and denote by $X_j^n$ the elements of the fiber $\mathcal{B}_{ji}$. Define $\tilde Y = Y \oplus \mathcal{B}$. We have $J^k \tilde Y \cong J^k Y\oplus J^k \mathcal{B}$. Suppose $G: J^k \tilde Y \longrightarrow \mathbb{R}$ represents a discretization of the continuous constraint. For instance, one can enforce a uniform mesh by defining $G:J^1 \tilde Y\rightarrow \mathbb{R}$, $G(j^1 \tilde \varphi,j^1 \tilde X)=X_x-1$ at the continuous level. The discrete counterpart will be defined on the discrete jet bundle $J^1 \tilde Y$ by the formula

\begin{equation}
\label{eq:G for k1}
G(y^j_i,y^j_{i+1},y^{j+1}_{i+1},y^{j+1}_i,X^j_i,X^j_{i+1},X^{j+1}_{i+1},X^{j+1}_i)=\frac{X^j_{i+1}-X^j_i}{\Delta x}-1.
\end{equation}

\noindent
Arc-length equidistribution can be realized by enforcing \eqref{eq:MMPDE0 - arclength}, that is, $G:J^2_0 \tilde Y\rightarrow \mathbb{R}$, $G(j^2_0 \tilde \varphi,j^2_0 \tilde X)=\alpha^2 \varphi_x \varphi_{xx}+X_x X_{xx}$. The discrete counterpart will be defined on the discrete subbundle $J^2_0 \tilde Y$ by the formula

\begin{align}
\label{eq:G for k2}
G(y_{\boxbar^l},X_{\boxbar^r})=\alpha^2(y_{\boxbar^3}-y_{\boxbar^2})^2+(X_{\boxbar^3}-X_{\boxbar^2})^2-\alpha^2(y_{\boxbar^2}-y_{\boxbar^1})^2-(X_{\boxbar^2}-X_{\boxbar^1})^2,
\end{align}

\noindent
where for convenience we used the notation introduced in \eqref{eq:6-tuple} and $l,r=1,\ldots,6$. Note that \eqref{eq:G for k2} coincides with \eqref{eq:ArclengthConstraint}. In fact, $g_i$ in \eqref{eq:ArclengthConstraint} is nothing else but $G$ computed on an element of $J^2_0 \tilde Y$ over the base 6-tuple $\boxbar$ such that $\boxbar^2=(j,i)$. The only difference is that in \eqref{eq:ArclengthConstraint} we assumed $g_i$ might depend on \emph{all} the field values at a given time step, while $G$ only takes arguments \emph{locally}, i.e. it depends on \emph{at most} 6 field values on a given 6-tuple.

A numerical scheme is now obtained by simultaneously solving the discrete Euler-Lagrange equations \eqref{eq:DELF} resulting from \eqref{eq:DiscreteVariationalPrincipleForApproach1} and the equation $G=0$. If we know $y^{j-1}_i$, $X^{j-1}_i$, $y^{j}_i$ and $X^{j}_i$ for $i=1,\ldots,N$, this system of equations allows us to solve for $y^{j+1}_i$, $X^{j+1}_i$. This numerical scheme is multisymplectic in the sense similar to Proposition~\ref{Thm: PRK for DAE and ODE}. If we take $X(t,x)$ to be a sufficiently smooth interpolation of the values $X^j_i$ and substitute it in the problem \eqref{eq:DiscreteVariationalPrincipleForApproach1}, then the resulting multisymplectic integrator will yield the same numerical values $y^{j+1}_i$.

\subsection{Analysis of the Lagrange multiplier approach}
\label{sec:multisymp approach2}

\subsubsection*{Continuous setting}

We now turn to describing the Lagrange multiplier approach in a multisymplectic setting. Similarly as in Section~\ref{subsec: multisymp approach1}, let the computational spacetime be $\mathcal{X}=\mathbb{R} \times [0,X_{max}]$ with coordinates $(t,x)$ and consider the trivial configuration bundles $\pi_{\mathcal{X}Y}:Y=\mathcal{X}\times \mathbb{R} \longrightarrow \mathcal{X}$ and $\pi_{\mathcal{X}\mathcal{B}}:\mathcal{B}=\mathcal{X}\times[0,X_{max}] \longrightarrow \mathcal{X}$. Let our scalar field be represented by a section $\tilde \varphi: \mathcal{X} \longrightarrow Y$ with the coordinate representation $\tilde \varphi(t,x)=(t,x,\varphi(t,x))$ and our diffeomorphism by a section $\tilde X:\mathcal{X}\longrightarrow \mathcal{B}$ with the local representation $\tilde X(t,x) = (t,x, X(t,x))$. Let the total configuration bundle be $\tilde Y = Y \oplus \mathcal{B}$. Then the Lagrangian density \eqref{eq:LagrangianDensity} can be viewed as a mapping $\tilde{\mathcal{L}}: J^1 \tilde Y \cong J^1 Y\oplus J^1 \mathcal{B} \longrightarrow \mathbb{R}$. The corresponding action \eqref{eq:action2} can now be expressed as

\begin{equation}
\label{eq:Action2Multisymplectic}
\tilde S [\tilde \varphi, \tilde X] = \int_\mathcal{U} \tilde{\mathcal{L}} \big(j^1 \tilde\varphi, j^1 \tilde X\big)\,dt \wedge dx,
\end{equation}

\noindent
where $\mathcal{U}=[0,T_{max}]\times[0,X_{max}]$. As before, the MMPDE constraint can be represented by a function $G:J^k \tilde Y \longrightarrow \mathbb{R}$. Two sections $\tilde \varphi$ and $\tilde X$ satisfy the constraint if 

\begin{equation}
\label{eq:MultisymplecticConstraint}
G(j^k \tilde \varphi, j^k \tilde X)=0. 
\end{equation}

\paragraph{Vakonomic formulation.} We now face the problem of finding the right equations of motion. We want to extremize the action functional \eqref{eq:Action2Multisymplectic} in some sense, subject to the constraint \eqref{eq:MultisymplecticConstraint}. Note that the constraint is essentially \emph{nonholonomic}, as it depends on the derivatives of the fields. Assuming $G$ is a submersion, $G=0$ defines a submanifold of $J^k \tilde Y$, but this submanifold will not in general be the $k$-th jet of any subbundle of $\tilde Y$. Two distinct approaches are possible here. One could follow the {\it Lagrange-d'Alembert principle} and take variations of $\tilde S$ first, but choosing variations $V$ (vertical vector fields on $\tilde Y$) such that the jet prolongations $j^k V$ are tangent to the submanifold $G=0$, and then enforce the constraint $G=0$. On the other hand, one could consider the {\it variational nonholonomic} problem (also called {\it vakonomic}), and minimize $\tilde S$ over the set of all sections $(\tilde \varphi, \tilde X)$ that satisfy the constraint $G=0$, that is, enforce the constraint before taking the variations. If the constraint is holonomic, both approaches yield the same equations of motion. However, if the constraint is nonholonomic, the resulting equations are in general different. Which equations are correct is really a matter of experimental verification. It has been established that the Lagrange-d'Alembert principle gives the right equations of motion for nonholonomic mechanical systems, whereas the vakonomic setting is appropriate for optimal control problems (see \cite{BlochBOOK}, \cite{BlochCrouch}, \cite{BlochMarsden}, \cite{CortesLeonDiego2002}). 

We will argue that the vakonomic approach is the right one in our case. In Proposition~\ref{Thm: App2 extremization equivalence} we showed that in the unconstrained case extremizing $S[\phi]$ with respect to $\phi$ was equivalent to extremizing $\tilde S[\tilde \varphi, \tilde X]$ with respect to $\tilde \varphi$, and in Proposition~\ref{Thm: E-L equations dependence} we showed that extremizing with respect to $\tilde X$ did not yield new information. This is because there was no restriction on the fields $\tilde \varphi$ and $\tilde X$, and for any given $\tilde X$ there was a one-to-one correspondence between $\phi$ and $\tilde \varphi$ given by the formula $\varphi(t,x)=\phi(t,X(t,x))$, so extremizing over all possible $\tilde \varphi$ was equivalent to extremizing over all possible $\phi$. Now, let $\mathcal{N}$ be the set of all smooth sections $(\tilde \varphi,\tilde X)$ that satisfy the constraint \eqref{eq:MultisymplecticConstraint} such that $X(t,.)$ is a diffeomorphism for all $t$. It should be intuitively clear that under appropriate assumptions on the mesh density function $\rho$, for any given smooth function $\phi(t,X)$, equation \eqref{eq:MMPDE0} together with $\varphi(t,x)=\phi(t,X(t,x))$ define a unique pair $(\tilde \varphi,\tilde X)\in \mathcal{N}$ (since our main purpose here is to only justify the application of the vakonomic approach, we do not attempt to specify those analytic assumptions precisely). Conversely, any given pair $(\tilde \varphi,\tilde X)\in \mathcal{N}$ defines a unique function $\phi$ through the formula $\phi(t,X)=\varphi(t,\xi(t,X))$, where $\xi(t,.)=X(t,.)^{-1}$, as in Section~\ref{subsec:reparametrized lagrangian 2}. Given this one-to-one correspondence and the fact that $S[\phi]=\tilde S[\tilde \varphi, \tilde X]$ by definition, we see that extremizing $S$ with respect to all smooth $\phi$ is equivalent to extremizing $\tilde S$ over all smooth sections $(\tilde \varphi,\tilde X)\in \mathcal{N}$. We conclude that the vakonomic approach is appropriate in our case, since it follows from Hamilton's principle for the original, physically meaningful, action functional $S$.

Let us also note that our constraint depends on spatial derivatives only. Therefore, in the setting presented in Section~\ref{sec:approach1} and Section~\ref{sec:approach2} it can be considered holonomic, as it restricts the infinite-dimensional configuration manifold of fields that we used as our configuration space. In that case it is valid to use Hamilton's principle and minimize the action functional over the set of all allowable fields, i.e. those that satisfy the constraint $G=0$. We did that by considering the augmented instantaneous Lagrangian \eqref{eq:Auxiliary Lagrangian - approach 2}. 

In order to minimize $\tilde S$ over the set of sections satisfying the constraint \eqref{eq:MultisymplecticConstraint} we will use the bundle-theoretic version of the Lagrange multiplier theorem, which we cite below after \cite{MarsdenPekarskyShkollerWest}.

\begin{thm}[{\bf Lagrange multiplier theorem}]
\label{Thm: Lagrange multiplier theorem}
Let $\pi_{\mathcal{M},\mathcal{E}}:\mathcal{E}\longrightarrow \mathcal{M}$ be an inner product bundle over a smooth manifold $\mathcal{M}$, $\Psi$ a smooth section of $\pi_{\mathcal{M},\mathcal{E}}$, and $h:\mathcal{M} \longrightarrow \mathbb{R}$ a smooth function. Setting $\mathcal{N}=\Psi^{-1}(0)$, the following are equivalent:

\begin{enumerate}
\item $\sigma \in \mathcal{N}$ is an extremum of $h|_\mathcal{N}$,
\item there exists an extremum $\bar \sigma \in \mathcal{E}$ of $\bar h : \mathcal{E} \longrightarrow \mathbb{R}$ such that $\pi_{\mathcal{M},\mathcal{E}}(\bar \sigma)=\sigma$,
\end{enumerate}

\noindent
where $\bar h (\bar \sigma) = h( \pi_{\mathcal{M},\mathcal{E}}(\bar \sigma) ) - \big\langle \bar \sigma, \Psi(\pi_{\mathcal{M},\mathcal{E}}(\bar \sigma)) \big\rangle_\mathcal{E}$.\\
\end{thm}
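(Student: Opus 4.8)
The plan is to reduce this bundle-theoretic statement to the classical (Hilbert-space) Lagrange multiplier theorem by passing to a local trivialization, and then to verify that the resulting stationarity conditions are intrinsic because they are only ever evaluated where the constraint vanishes. I will implicitly assume, as the statement requires for $\mathcal{N}$ to be a manifold and for multipliers to exist, that $0$ is a regular value of $\Psi$, so that the linearization $D\Psi(\sigma)$ is surjective with closed range and $T_\sigma\mathcal{N}=\ker D\Psi(\sigma)$.

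First I would fix $\sigma$ and trivialize the inner product bundle over a neighborhood $U$ of $\sigma$, writing $\mathcal{E}|_U\cong U\times F$ with model fiber $(F,\langle\cdot,\cdot\rangle_F)$. In this chart $\Psi$ becomes a smooth map $\Psi:U\to F$, a point of $\mathcal{E}$ over $\sigma$ is a pair $(\sigma,\mu)$ with $\mu\in F$, the constraint set is $\mathcal{N}\cap U=\Psi^{-1}(0)$, and the augmented function is the genuine real-valued map
\begin{equation}
\bar h(\sigma,\mu)=h(\sigma)-\langle\mu,\Psi(\sigma)\rangle_F
\end{equation}
on the product $U\times F$. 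The whole content of the theorem is now a statement about critical points of $\bar h$ versus constrained critical points of $h$.

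Next I would split $d\bar h(\sigma,\mu)$ into its fiber and base components. Varying $\mu$ gives $-\langle\delta\mu,\Psi(\sigma)\rangle_F=0$ for all $\delta\mu\in F$, which is equivalent to $\Psi(\sigma)=0$, i.e. $\sigma\in\mathcal{N}$; varying $\sigma$ at fixed $\mu$ gives $dh(\sigma)\cdot v=\langle\mu,D\Psi(\sigma)\cdot v\rangle_F$ for all $v\in T_\sigma\mathcal{M}$. Hence $(\sigma,\mu)$ is an unconstrained extremum of $\bar h$ exactly when $\sigma\in\mathcal{N}$ and $dh(\sigma)=D\Psi(\sigma)^*\mu$ under the identification $F^*\cong F$ supplied by the inner product. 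To match this with condition (1) I would invoke the classical Lagrange multiplier theorem: $\sigma$ is a critical point of $h|_\mathcal{N}$ iff $dh(\sigma)$ annihilates $\ker D\Psi(\sigma)$, and, since $D\Psi(\sigma)$ has closed range, $(\ker D\Psi(\sigma))^\circ=\mathrm{range}\,D\Psi(\sigma)^*$. Thus a multiplier $\mu$ with $dh(\sigma)=D\Psi(\sigma)^*\mu$ exists iff $\sigma$ is a constrained extremum, which gives both implications (1)$\Rightarrow$(2) and (2)$\Rightarrow$(1).

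It remains to check that this local argument really is a statement about the bundle. The only trivialization-dependent object above is $D\Psi(\sigma)$, but it enters solely through $\ker D\Psi(\sigma)$ and the pairing $\langle\mu,D\Psi(\sigma)v\rangle_F$, both evaluated where $\Psi(\sigma)=0$; at such a point the linearization of a section is a well-defined map $T_\sigma\mathcal{M}\to\mathcal{E}_\sigma$ independent of the trivialization (equivalently, the connection terms in any metric covariant derivative of $\Psi$ are weighted by $\Psi(\sigma)=0$ and drop out). Consequently the multiplier $\bar\sigma$ and the stationarity conditions are intrinsic, and the local equivalence is the asserted global one. I expect the genuine obstacle to be this functional-analytic point rather than the geometry: in the infinite-dimensional field-theoretic setting the existence of $\mu$ rests on $D\Psi(\sigma)$ having closed range, which is exactly the regularity hypothesis on $\Psi$; the cross-fiber differentiation is a secondary subtlety tamed by the vanishing of $\Psi$ at extrema.
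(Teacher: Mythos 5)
There is nothing in the paper to compare your argument against: the authors do not prove this theorem, they cite it verbatim from Marsden--Pekarsky--Shkoller--West and immediately move on to applying it. Judged on its own, your proof is the standard argument and is essentially sound. Two remarks. First, you are right that the statement as given is incomplete without a constraint-qualification hypothesis: the implication $(1)\Rightarrow(2)$ requires that $dh(\sigma)$, which annihilates $\ker D\Psi(\sigma)$, actually lie in the range of $D\Psi(\sigma)^*$, and in infinite dimensions this needs $D\Psi(\sigma)$ to be surjective (or at least to have closed range); your explicit flagging of this is the most valuable part of the write-up, since the paper later assumes exactly this kind of regularity for its discrete constraints ($0$ a regular value of $g$) but never states it here. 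Second, be aware that the word \emph{extremum} in the statement must be read as \emph{stationary point}: if $\sigma$ is a genuine local minimum of $h|_\mathcal{N}$, the corresponding $\bar\sigma$ is in general a saddle of $\bar h$, not a local extremum, so under a literal reading $(1)\Rightarrow(2)$ would be false. Your computation of $d\bar h$ implicitly adopts the stationary-point reading, which is the intended one in this calculus-of-variations context, but it is worth saying so explicitly. The trivialization-independence point at the end is correct and handled cleanly: the linearization of a section at a zero is intrinsic, so the local characterization globalizes.
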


Let us briefly review the ideas presented in \cite{MarsdenPekarskyShkollerWest}, adjusting the notation to our problem and generalizing when necessary. Let 

\begin{equation}
\label{eq:SmoothSectionsY}
C_\mathcal{U}^\infty(\tilde Y) = \{\sigma=(\tilde \varphi,\tilde X): \mathcal{U}\subset\mathcal{X}\longrightarrow \tilde Y \} 
\end{equation}

\noindent
be the set of smooth sections of $\pi_{\mathcal{X}\tilde Y}$ on $\mathcal{U}$. Then $\tilde S: C_\mathcal{U}^\infty(\tilde Y) \longrightarrow \mathbb{R}$ can be identified with $h$ in Theorem~\ref{Thm: Lagrange multiplier theorem}, where $\mathcal{M}=C_\mathcal{U}^\infty(\tilde Y)$. Furthermore, define the trivial bundle 

\begin{equation}
\label{eq:BundleV}
\pi_{\mathcal{X}\mathcal{V}}: \mathcal{V}=\mathcal{X}\times \mathbb{R} \longrightarrow \mathcal{X}
\end{equation}

\noindent
and let $C_\mathcal{U}^\infty(\mathcal{V})$ be the set of smooth sections $\tilde \lambda : \mathcal{U}\longrightarrow \mathcal{V}$, which represent our Lagrange multipliers and in local coordinates have the representation $\tilde \lambda(t,x) = (t,x,\lambda(t,x))$. The set $C_\mathcal{U}^\infty(\mathcal{V})$ is an inner product space with $\langle\tilde \lambda_1,\tilde \lambda_2\rangle=\int_\mathcal{U} \lambda_1 \lambda_2\,dt\wedge dx$. Take 

\begin{equation}
\label{eq:BundleE}
\mathcal{E}=C_\mathcal{U}^\infty(\tilde Y) \times C_\mathcal{U}^\infty(\mathcal{V}). 
\end{equation}

\noindent
This is an inner product bundle over $C_\mathcal{U}^\infty(\tilde Y)$ with the inner product defined by 

\begin{equation}
\label{eq:Inner Product on E}
\Big\langle (\sigma,\tilde \lambda_1), (\sigma,\tilde \lambda_2) \Big\rangle_\mathcal{E} = \langle\tilde \lambda_1,\tilde \lambda_2\rangle.
\end{equation}

\noindent
We now have to construct a smooth section $\Psi:C_\mathcal{U}^\infty(\tilde Y)\longrightarrow \mathcal{E}$ that will realize our constraint \eqref{eq:MultisymplecticConstraint}. Define the fiber-preserving mapping $\tilde G: J^k \tilde Y \longrightarrow \mathcal{V}$ such that for $\vartheta \in J^k\tilde Y$

\begin{equation}
\label{eq:Gtilde}
\tilde G(\vartheta)=\big( \pi_{\mathcal{X},J^k \tilde Y} (\vartheta),  G(\vartheta)\big).
\end{equation}

\noindent
For instance, for $k=1$, in local coordinates we have $\tilde G (t,x,y,v_t,v_x) = (t,x, G (t,x,y,v_t,v_x) )$. Then we can define

\begin{equation}
\label{eq:Psi}
\Psi(\sigma) = (\sigma,\tilde G\circ j^k\sigma).
\end{equation}

\noindent
The set of allowable sections $\mathcal{N}\subset C_\mathcal{U}^\infty(\tilde Y)$ is now defined by $\mathcal{N}=\Psi^{-1}(0)$. That is, $(\tilde \varphi, \tilde X) \in \mathcal{N}$ provided that $G(j^k \tilde \varphi, j^k \tilde X)=0$.

The augmented action functional $\tilde S_C : \mathcal{E}\longrightarrow \mathbb{R}$ is now given by

\begin{equation}
\label{eq:Augmented Action Multisymplectic}
\tilde S_C [\bar \sigma] = \tilde S[ \pi_{\mathcal{M},\mathcal{E}}(\bar \sigma) ] - \big\langle \bar \sigma, \Psi(\pi_{\mathcal{M},\mathcal{E}}(\bar \sigma)) \big\rangle_\mathcal{E},
\end{equation}

\noindent
or denoting $\bar \sigma = (\tilde \varphi, \tilde X,\tilde \lambda)$

\begin{align}
\label{eq:Augmented Action Multisymplectic Final}
\tilde S_C[\tilde \varphi, \tilde X,\tilde \lambda] &=\tilde S[ \tilde \varphi, \tilde X ] - \big\langle \tilde \lambda, \tilde G \circ (j^k \tilde \varphi,j^k \tilde X) \big\rangle \nonumber \\
&=\int_\mathcal{U} \tilde{\mathcal{L}} \big(j^1 \tilde\varphi, j^1 \tilde X\big)\,dt \wedge dx  - \int_\mathcal{U} \lambda(t,x) \, G(j^k \tilde \varphi,j^k \tilde X) \,dt \wedge dx \nonumber \\
&=\int_\mathcal{U} \Big[ \tilde{\mathcal{L}} \big(j^1 \tilde\varphi, j^1 \tilde X\big)-\lambda(t,x) \, G(j^k \tilde \varphi,j^k \tilde X)\Big] \,dt \wedge dx.
\end{align}

\noindent
Theorem~\ref{Thm: Lagrange multiplier theorem} states, that if $(\tilde \varphi, \tilde X,\tilde \lambda)$ is an extremum of $\tilde S_C$, then $(\tilde \varphi, \tilde X)$ extremizes $\tilde S$ over the set $\mathcal{N}$ of sections satisfying the constraint $G=0$. Note that using the multisymplectic formalism we obtained the same result as \eqref{eq:Auxiliary Lagrangian - approach 2} in the instantaneous formulation, where we could treat $G$ as a holonomic constraint. The dynamics is obtained by solving for a triple $(\tilde \varphi, \tilde X,\tilde \lambda)$ such that 

\begin{equation}
\label{eq:Multisymplectic Extremization of Augmented SC tilde}
\frac{d}{d\epsilon} \bigg|_{\epsilon=0} \tilde S_C[\eta_Y^\epsilon \circ \tilde \varphi,\eta_\mathcal{B}^\epsilon \circ \tilde X,\eta_\mathcal{V}^\epsilon \circ \tilde \lambda]=0
\end{equation}

\noindent
for all $\eta_Y^\epsilon$, $\eta_\mathcal{B}^\epsilon$, $\eta_\mathcal{V}^\epsilon$ that keep the boundary conditions on $\partial \mathcal{U}$ fixed, where $\eta^\epsilon$ denotes the flow of vertical vector fields on respective bundles.

Note that we can define $\tilde Y_C = Y \oplus \mathcal{B} \oplus \mathcal{V}$ and $\tilde{\mathcal{L}_C}: J^k \tilde Y_C \longrightarrow \mathbb{R}$ by setting $\tilde{\mathcal{L}_C} = \tilde{\mathcal{L}} - \lambda\cdot G$, i.e., we can consider a $k$-th order field theory. If $k=1,2$ then an appropriate multisymplectic form formula in terms of the fields $\tilde \varphi$, $\tilde X$ and $\tilde \lambda$ will hold. Presumably, this can be generalized for $k>2$ using the techniques put forth in \cite{KouranbaevaShkoller}. However, it is an interesting question whether there exists any multisymplectic form formula defined in terms of $\tilde \varphi$, $\tilde X$ and objects on $J^k \tilde Y$ only. It appears to be an open problem. This would be the multisymplectic analog of the fact that the flow of a constrained mechanical system is symplectic on the constraint submanifold of the configuration space.

\subsubsection*{Discretization}

Let us use the same discretization as discussed in Section~\ref{subsec: multisymp approach1}. Assume we have a discrete Lagrangian $\tilde L: J^1 \tilde Y \longrightarrow \mathbb{R}$, the corresponding discrete action $\tilde S[\tilde \varphi, \tilde X]$, and a discrete constraint $G:J^1 \tilde Y \longrightarrow \mathbb{R}$ or $G:J^2_0 \tilde Y \longrightarrow \mathbb{R}$. Note that $\tilde S$ is essentially a function of $2MN$ variables and we want to extremize it subject to the set of algebraic constraints $G=0$. The standard Lagrange multiplier theorem proved in basic calculus textbooks applies here. However, let us work out a discrete counterpart of the formalism introduced at the continuous level. This will facilitate the discussion of the discrete notion of multisymplecticity. Let

\begin{equation}
\label{eq:DiscreteSectionsY}
C_\mathcal{U}(\tilde Y) = \{\sigma=(\tilde \varphi,\tilde X): \mathcal{U}\subset\mathcal{X}\longrightarrow \tilde Y \} 
\end{equation}

\noindent
be the set of discrete sections of $\pi_{\mathcal{X}\tilde Y}:\tilde Y \longrightarrow \mathcal{X}$. Similarly, define the discrete bundle $\mathcal{V}=\mathcal{X}\times \mathbb{R}$ and let $C_{\mathcal{U}_0}(\mathcal{V})$ be the set of discrete sections $\tilde \lambda:\mathcal{U}_0\longrightarrow\mathcal{V}$ representing the Lagrange multipliers, where $\mathcal{U}_0\subset\mathcal{U}$ is defined below. Let $\tilde \lambda(j,i) = (j,i,\lambda(j,i))$ with $\lambda^j_i\equiv\lambda(j,i)$ be the local representation. The set $C_{\mathcal{U}_0}(\mathcal{V})$ is an inner product space with $\langle\tilde \lambda,\tilde \mu\rangle = \sum_{(j,i)\in \mathcal{U}_0}\lambda^j_i \mu^j_i$. Take $\mathcal{E}=C_\mathcal{U}(\tilde Y) \times C_{\mathcal{U}_0}(\mathcal{V})$. Just like at the continuous level, $\mathcal{E}$ is an inner product bundle. However, at the discrete level it is more convenient to define the inner product on $\mathcal{E}$ in a slightly modified way. Since there are some nuances in the notation, let us consider the cases $k=1$ and $k=2$ separately.

\paragraph{Case $k=1$.} Let $\mathcal{U}_0=\{(j,i)\in \mathcal{U} \,|\,j\leq M,i\leq N\}$. Define the trivial bundle $\hat{\mathcal{V}} = \mathcal{X}^\square \times \mathbb{R}$ and let $C_{\mathcal{U}^\square}(\hat{\mathcal{V}})$ be the set of all sections of $\hat{\mathcal{V}}$ defined on $\mathcal{U}^\square$. For a given section $\tilde \lambda \in C_{\mathcal{U}_0}(\mathcal{V})$ we define its extension $\hat \lambda \in C_{\mathcal{U}^\square}(\hat{\mathcal{V}})$ by

\begin{equation}
\label{eq:Lambda Extension for k=1}
\hat \lambda(\square)=\big(\square,\lambda(\square^1)\big),
\end{equation}

\noindent
that is, $\hat \lambda$ assigns to the square $\square$ the value that $\tilde \lambda$ takes on the \emph{first} vertex of that square. Note that this operation is invertible: given a section of $C_{\mathcal{U}^\square}(\hat{\mathcal{V}})$ we can uniquely determine a section of $C_{\mathcal{U}_0}(\mathcal{V})$. We can define the inner product

\begin{equation}
\label{eq:Inner Product on the Space of Extensions for k=1}
\langle \hat \lambda, \hat \mu \rangle = \sum_{\square \subset \mathcal{U}} \lambda(\square^1) \mu(\square^1).
\end{equation}

\noindent
One can easily see that we have $\langle \hat \lambda, \hat \mu \rangle = \langle \tilde \lambda, \tilde \mu \rangle$, so by a slight abuse of notation we can use the same symbol $\langle.,.\rangle$ for both inner products. It will be clear from the context which definition should be invoked. We can now define an inner product on the fibers of $\mathcal{E}$ as

\begin{equation}
\label{eq:Discrete Inner Product on E for k=1}
\Big\langle (\sigma,\tilde \lambda), (\sigma,\tilde \mu) \Big\rangle_\mathcal{E} = \langle \hat\lambda,\hat\mu\rangle = \langle \tilde \lambda,\tilde \mu\rangle.
\end{equation}

\noindent 
Let us now construct a section $\Psi:C_\mathcal{U}(\tilde Y) \longrightarrow \mathcal{E}$ that will realize our discrete constraint $G$. First, in analogy to \eqref{eq:Gtilde}, define the fiber-preserving mapping $\tilde G:J^1\tilde Y \longrightarrow \hat{\mathcal{V}}$ such that

\begin{equation}
\label{eq:Gtilde Discrete k=1}
\tilde G(y_{\square^l},X_{\square^r}) = \big(\square,G(y_{\square^l},X_{\square^r})\big),
\end{equation}

\noindent
where $l,r=1,2,3,4$. We now define $\Psi$ by requiring that for $\sigma \in C_\mathcal{U}(\tilde Y)$ the extension \eqref{eq:Lambda Extension for k=1} of $\Psi(\sigma)$ is given by

\begin{equation}
\label{eq:Psi Discrete for k=1}
\hat \Psi(\sigma) = (\sigma,\tilde G\circ j^1\sigma).
\end{equation}

\noindent
The set of allowable sections $\mathcal{N}\subset C_\mathcal{U}(\tilde Y)$ is now defined by $\mathcal{N}=\Psi^{-1}(0)$---that is, $(\tilde \varphi, \tilde X) \in \mathcal{N}$ provided that $G(j^1 \tilde \varphi, j^1 \tilde X)=0$ for all $\square \in \mathcal{U}^\square$. The augmented discrete action $\tilde S_C:\mathcal{E}\longrightarrow \mathbb{R}$ is therefore

\begin{align}
\label{eq:Discrete Augmented Action for k=1}
\tilde S_C[\sigma,\tilde \lambda] &= \tilde S[\sigma] - \Big\langle (\sigma,\tilde \lambda), \Psi(\sigma) \Big\rangle_\mathcal{E} \nonumber \\
&= \tilde S[\sigma] - \Big\langle \hat \lambda, \tilde G\circ j^1\sigma \Big\rangle \nonumber\\
&= \sum_{\square \subset \mathcal{U}}\tilde L(j^1 \sigma) - \sum_{\square \subset \mathcal{U}}\lambda(\square^1) G(j^1 \sigma) \nonumber\\
&= \sum_{\square \subset \mathcal{U}} \Big( \tilde L(j^1 \sigma) - \lambda(\square^1) G(j^1 \sigma) \Big).
\end{align}

\noindent
By the standard Lagrange multiplier theorem, if $(\tilde \varphi, \tilde X,\tilde \lambda)$ is an extremum of $\tilde S_C$, then $(\tilde \varphi, \tilde X)$ is an extremum of $\tilde S$ over the set $\mathcal{N}$ of sections satisfying the constraint $G=0$. The discrete Hamilton principle can be expressed as

\begin{equation}
\label{eq:Discrete Variational Principle for k=1}
\frac{d}{d \epsilon} \bigg|_{\epsilon=0} \tilde S_C[\tilde \varphi_\epsilon,\tilde X_\epsilon, \tilde \lambda_\epsilon]=0
\end{equation}

\noindent
for all vector fields $V$ on $Y$, $W$ on $\mathcal{B}$, and $Z$ on $\mathcal{V}$ that keep the boundary conditions on $\partial \mathcal{U}$ fixed, where $\tilde \varphi_\epsilon(j,i)=F^{V_{ji}}_\epsilon (\tilde \varphi(j,i))$ and $F^{V_{ji}}_\epsilon$ is the flow of $V_{ji}$ on $\mathbb{R}$, and similarly for $\tilde X_\epsilon$ and $\tilde \lambda_\epsilon$. The discrete Euler-Lagrange equations can be conveniently computed if in \eqref{eq:Discrete Variational Principle for k=1} one focuses on some $(j,i)\in \textrm{int}\,\mathcal{U}$. With the convention $\tilde \varphi(j,i)=y^j_i$, $\tilde X(j,i)=X^j_i$, $\tilde \lambda(j,i)=\lambda^j_i$, we write the terms of $\tilde S_C$ containing $y^j_i$, $X^j_i$ and $\lambda^j_i$ explicitly as

\begin{align}
\label{Terms in SC containing (j,i) terms for k=1}
\tilde S_C = \ldots &+ \tilde L \big(y^j_i,y^j_{i+1},y^{j+1}_{i+1},y^{j+1}_i,X^j_i,X^j_{i+1},X^{j+1}_{i+1},X^{j+1}_i\big) \nonumber \\
&+ \tilde L \big(y^j_{i-1},y^j_{i},y^{j+1}_{i},y^{j+1}_{i-1},X^j_{i-1},X^j_{i},X^{j+1}_{i},X^{j+1}_{i-1}\big) \nonumber \\
&+ \tilde L \big(y^{j-1}_{i-1},y^{j-1}_{i},y^{j}_{i},y^{j}_{i-1},X^{j-1}_{i-1},X^{j-1}_{i},X^{j}_{i},X^{j}_{i-1}\big) \nonumber \\
&+ \tilde L \big(y^{j-1}_{i},y^{j-1}_{i+1},y^{j}_{i+1},y^{j}_{i},X^{j-1}_{i},X^{j-1}_{i+1},X^{j}_{i+1},X^{j}_{i}\big) \nonumber \\
&+ \lambda^j_i G \big(y^j_i,y^j_{i+1},y^{j+1}_{i+1},y^{j+1}_i,X^j_i,X^j_{i+1},X^{j+1}_{i+1},X^{j+1}_i\big) \nonumber \\
&+ \lambda^j_{i-1} G \big(y^j_{i-1},y^j_{i},y^{j+1}_{i},y^{j+1}_{i-1},X^j_{i-1},X^j_{i},X^{j+1}_{i},X^{j+1}_{i-1}\big) \nonumber \\
&+ \lambda^{j-1}_{i-1} G \big(y^{j-1}_{i-1},y^{j-1}_{i},y^{j}_{i},y^{j}_{i-1},X^{j-1}_{i-1},X^{j-1}_{i},X^{j}_{i},X^{j}_{i-1}\big) \nonumber \\
&+ \lambda^{j-1}_i G \big(y^{j-1}_{i},y^{j-1}_{i+1},y^{j}_{i+1},y^{j}_{i},X^{j-1}_{i},X^{j-1}_{i+1},X^{j}_{i+1},X^{j}_{i}\big) + \ldots
\end{align}

\noindent
The discrete Euler-Lagrange equations are obtained by differentiating with respect to $y^j_i$, $X^j_i$ and $\lambda^j_i$, and can be written compactly as

\begin{align}
\label{DELF for k=1}
\sum_{\substack{l,\square\\ \scriptscriptstyle (j,i)=\square^l}} \bigg[ \frac{\partial \tilde L}{\partial y^l} (y_{\square^1},\ldots,y_{\square^4},&X_{\square^1},\ldots,X_{\square^4}) + \nonumber \\
&+\lambda_{\square^1}\frac{\partial G}{\partial y^l} (y_{\square^1},\ldots,y_{\square^4},X_{\square^1},\ldots,X_{\square^4}) \bigg]=0, \nonumber \\
\nonumber \\
\sum_{\substack{l,\square\\ \scriptscriptstyle (j,i)=\square^l}} \bigg[ \frac{\partial \tilde L}{\partial X^l} (y_{\square^1},\ldots,y_{\square^4},&X_{\square^1},\ldots,X_{\square^4}) + \nonumber \\
&+\lambda_{\square^1}\frac{\partial G}{\partial X^l} (y_{\square^1},\ldots,y_{\square^4},X_{\square^1},\ldots,X_{\square^4}) \bigg]=0, \nonumber \\
\nonumber \\
G \big(y^j_i,y^j_{i+1},y^{j+1}_{i+1},y^{j+1}_i,X&^j_i,X^j_{i+1},X^{j+1}_{i+1},X^{j+1}_i\big)=0
\end{align}

\noindent
for all $(j,i) \in \textrm{int}\,\mathcal{U}$. If we know $y^{j-1}_i$, $X^{j-1}_i$, $y^{j}_i$, $X^{j}_i$ and $\lambda^{j-1}_i$ for $i=1,\ldots,N$, this system of equations allows us to solve for $y^{j+1}_i$, $X^{j+1}_i$ and $\lambda^{j}_i$.

Note that we can define $\tilde Y_C = Y \oplus \mathcal{B} \oplus \mathcal{V}$ and the augmented Lagrangian $\tilde{L}_C: J^1 \tilde Y_C \longrightarrow \mathbb{R}$ by setting

\begin{equation}
\label{eq:Discrete Augmented Lagrangian for k=1}
\tilde{L}_C(j^1 \tilde \varphi, j^1 \tilde X,j^1 \tilde\lambda) = \tilde{L}(j^1 \tilde \varphi, j^1 \tilde X) - \lambda(\square^1)\cdot G(j^1 \tilde \varphi, j^1 \tilde X),
\end{equation} 

\noindent
that is, we can consider an unconstrained field theory in terms of the fields $\tilde \varphi$, $\tilde X$ and $\tilde \lambda$. Then, the solutions of \eqref{DELF for k=1} satisfy the multisymplectic form formula \eqref{eq:Discrete Multisymplectic Form Formula} in terms of objects defined on $J^1 \tilde Y_C$.

\paragraph{Case $k=2$.} Let $\mathcal{U}_0=\{(j,i)\in \mathcal{U} \,|\,j\leq M,1\leq i\leq N\}$. Define the trivial bundle $\hat{\mathcal{V}} = \mathcal{X}^\boxbar \times \mathbb{R}$ and let $C_{\mathcal{U}^\boxbar}(\hat{\mathcal{V}})$ be the set of all sections of $\hat{\mathcal{V}}$ defined on $\mathcal{U}^\boxbar$. For a given section $\tilde \lambda \in C_{\mathcal{U}_0}(\mathcal{V})$ we define its extension $\hat \lambda \in C_{\mathcal{U}^\boxbar}(\hat{\mathcal{V}})$ by

\begin{equation}
\label{eq:Lambda Extension for k=2}
\hat \lambda(\boxbar)=\big(\boxbar,\lambda(\boxbar^2)\big),
\end{equation}

\noindent
that is, $\hat \lambda$ assigns to the 6-tuple $\boxbar$ the value that $\tilde \lambda$ takes on the \emph{second} vertex of that 6-tuple. Like before, this operation is invertible. We can define the inner product

\begin{equation}
\label{eq:Inner Product on the Space of Extensions for k=2}
\langle \hat \lambda, \hat \mu \rangle = \sum_{\boxbar \subset \mathcal{U}} \lambda(\boxbar^2) \mu(\boxbar^2)
\end{equation}

\noindent
and the inner product on $\mathcal{E}$ as in \eqref{eq:Discrete Inner Product on E for k=1}. Define the fiber-preserving mapping $\tilde G:J^2_0\tilde Y \longrightarrow \hat{\mathcal{V}}$ such that

\begin{equation}
\label{eq:Gtilde Discrete k=2}
\tilde G(y_{\boxbar^l},X_{\boxbar^r}) = \big(\boxbar,G(y_{\boxbar^l},X_{\boxbar^r})\big),
\end{equation}

\noindent
where $l,r=1,\ldots,6$. We now define $\Psi$ by requiring that for $\sigma \in C_\mathcal{U}(\tilde Y)$ the extension \eqref{eq:Lambda Extension for k=2} of $\Psi(\sigma)$ is given by

\begin{equation}
\label{eq:Psi Discrete for k=2}
\hat \Psi(\sigma) = (\sigma,\tilde G\circ j^2_0\sigma).
\end{equation}

\noindent
Again, the set of allowable sections is $\mathcal{N}=\Psi^{-1}(0)$. That is, $(\tilde \varphi, \tilde X) \in \mathcal{N}$ provided that $G(j^2_0 \tilde \varphi, j^2_0 \tilde X)=0$ for all $\boxbar \in \mathcal{U}^\boxbar$. The augmented discrete action $\tilde S_C:\mathcal{E}\longrightarrow \mathbb{R}$ is therefore

\begin{align}
\label{eq:Discrete Augmented Action for k=2}
\tilde S_C[\sigma,\tilde \lambda] &= \tilde S[\sigma] - \Big\langle (\sigma,\tilde \lambda), \Psi(\sigma) \Big\rangle_\mathcal{E} \nonumber \\
&= \tilde S[\sigma] - \Big\langle \hat \lambda, \tilde G\circ j^2_0\sigma \Big\rangle \nonumber\\
&= \sum_{\square \subset \mathcal{U}}\tilde L(j^1 \sigma) - \sum_{\boxbar \subset \mathcal{U}}\lambda(\boxbar^2) G(j^2_0 \sigma).
\end{align}

\noindent
Writing out the terms involving $y^j_i$, $X^j_i$ and $\lambda^j_i$ explicitly, as in \eqref{Terms in SC containing (j,i) terms for k=1}, and invoking the discrete Hamilton principle \eqref{eq:Discrete Variational Principle for k=1}, one obtains the discrete Euler-Lagrange equations, which can be compactly expressed as

\begin{align}
\label{DELF for k=2}
\sum_{\substack{l,\square\\ \scriptscriptstyle (j,i)=\square^l}}\frac{\partial \tilde L}{\partial y^l} (y_{\square^1},\ldots,y_{\square^4},&X_{\square^1},\ldots,X_{\square^4}) + \nonumber \\
&+\sum_{\substack{l,\boxbar\\ \scriptscriptstyle (j,i)=\boxbar^l}} \lambda_{\boxbar^2}\frac{\partial G}{\partial y^l} (y_{\boxbar^1},\ldots,y_{\boxbar^6},X_{\boxbar^1},\ldots,X_{\boxbar^6})  =0, \nonumber \\
\nonumber \\
\sum_{\substack{l,\square\\ \scriptscriptstyle (j,i)=\square^l}}\frac{\partial \tilde L}{\partial X^l} (y_{\square^1},\ldots,y_{\square^4},&X_{\square^1},\ldots,X_{\square^4}) + \nonumber \\
&+\sum_{\substack{l,\boxbar\\ \scriptscriptstyle (j,i)=\boxbar^l}} \lambda_{\boxbar^2}\frac{\partial G}{\partial X^l} (y_{\boxbar^1},\ldots,y_{\boxbar^6},X_{\boxbar^1},\ldots,X_{\boxbar^6})  =0, \nonumber \\
\nonumber \\
G \big(y^j_{i-1},y^j_i,y^j_{i+1},y^{j+1}_{i+1},y&^{j+1}_i,y^{j+1}_{i-1},X^j_{i-1},X^j_i,X^j_{i+1},X^{j+1}_{i+1},X^{j+1}_i,X^{j+1}_{i-1}\big)=0
\end{align}

\noindent
for all $(j,i) \in \textrm{int}\,\mathcal{U}$. If we know $y^{j-1}_i$, $X^{j-1}_i$, $y^{j}_i$, $X^{j}_i$ and $\lambda^{j-1}_i$ for $i=1,\ldots,N$, this system of equations allows us to solve for $y^{j+1}_i$, $X^{j+1}_i$ and $\lambda^{j}_i$.

Let us define the extension $\tilde L_{\textrm{ext}}:J^2_0 \tilde Y \longrightarrow \mathbb{R}$ of the Lagrangian density $\tilde L$ by setting

\begin{equation}
\label{eq:Lext}
\tilde L_{\textrm{ext}}(y_{\boxbar^1},\ldots,X_{\boxbar^6})=
\left\{
\begin{array}{ll}
\tilde L(y_{\square^1},\ldots,X_{\square^4}) & \textrm{if $\boxbar^2 = (j,0),(j,N+1)$,} \\
 & \textrm{where $\square=\boxbar \cap \mathcal{U}$,}\\
\frac{1}{2}\sum_{\square \subset \boxbar} \tilde L(y_{\square^1},\ldots,X_{\square^4}) & \textrm{otherwise.}
\end{array}
\right.
\end{equation}

\noindent
Let us also set $G(y_{\square^1},\ldots,X_{\square^4})=0$ if $\boxbar^2 = (j,0),(j,N+1)$. Define $\mathcal{A}=\{\boxbar \, | \, \boxbar^2,\boxbar^5 \in \mathcal{U} \}$. Then \eqref{eq:Discrete Augmented Action for k=2} can be written as

\begin{align}
\label{eq:Discrete Augmented Action for k=2 with Lext}
\tilde S_C[\sigma,\tilde \lambda] = \sum_{\boxbar \in \mathcal{A}}\Big[\tilde L_{\textrm{ext}}(j^2_0 \sigma) - \lambda(\boxbar^2) G(j^2_0 \sigma)\Big]=\sum_{\boxbar \in \mathcal{A}}\tilde L_C(j^2_0 \sigma,j^2_0 \tilde \lambda),
\end{align}

\noindent
where the last equality defines the augmented Lagrangian $\tilde L_C : J^2_0 \tilde Y_C \longrightarrow \mathbb{R}$ for $\tilde Y_C = Y \oplus \mathcal{B} \oplus \mathcal{V}$. Therefore, we can consider an unconstrained second-order field theory in terms of the fields $\tilde \varphi$, $\tilde X$ and $\tilde \lambda$, and the solutions of \eqref{DELF for k=2} will satisfy a discrete multisymplectic form formula very similar to the one proved in \cite{KouranbaevaShkoller}. The only difference is the fact that the authors analyzed a discretization of the Camassa-Holm equation and were able to consider an even smaller subbundle of the second jet of the configuration bundle. As a result it was sufficient for them to consider a discretization based on squares $\square$ rather than 6-tuples $\boxbar$. In our case there will be six discrete 2-forms $\Omega^l_{\tilde L_C}$ for $l=1,\ldots,6$ instead of just four.

\paragraph{Remark.} In both cases we showed that our discretization leads to integrators that are multisymplectic on the augmented jets $J^k \tilde Y_C$. However, just like in the continuous setting, it is an interesting problem whether there exists a discrete multisymplectic form formula in terms of objects defined on $J^k \tilde Y$ only.

\paragraph{Example: Trapezoidal rule.} Consider the semi-discrete Lagrangian \eqref{eq:ParticularLN}. We can use the trapezoidal rule to define the discrete Lagrangian \eqref{eq:DiscreteLagrangianDefinition} as

\begin{align}
\label{eq: Discrete Lagrangian for the Trapezoidal Rule}
\tilde L_d(y^j,X^j,y^{j+1},X^{j+1}) = \frac{\Delta t}{2} \tilde L_N \bigg(y^j&, X^j,\frac{y^{j+1}-y^j}{\Delta t},\frac{X^{j+1}-X^j}{\Delta t} \bigg) \nonumber \\
 &+ \frac{\Delta t}{2} \tilde L_N \bigg(y^{j+1},X^{j+1},\frac{y^{j+1}-y^j}{\Delta t},\frac{X^{j+1}-X^j}{\Delta t} \bigg),
\end{align}

\noindent
where $y^j=(y^j_1,\ldots,y^j_N)$ and $X^j=(X^j_1,\ldots,X^j_N)$. The constrained version (see \cite{MarsdenWestVarInt}) of the Discrete Euler-Lagrange equations \eqref{eq:Discrete Euler-Lagrange equations} takes the form

\begin{align}
\label{eq: Constrained Discrete Euler-Lagrange equations}
D_2 \tilde L_d(q^{j-1},q^j) + D_1 \tilde L_d(q^j,q^{j+1}) &= Dg(q^j)^T \lambda^j, \nonumber \\
g(q^{j+1}) &=0,
\end{align}

\noindent
where for brevity $q^j=(y^j_1,X^j_1,\ldots,y^j_N,X^j_N)$, $\lambda^j=(\lambda^j_1,\ldots,\lambda^j_N)$ and $g$ is an adaptation constraint, for instance \eqref{eq:ArclengthConstraint}. If $q^{j-1}$, $q^j$ are known, then \eqref{eq: Constrained Discrete Euler-Lagrange equations} can be used to compute $q^{j+1}$ and $\lambda^j$. It is easy to verify that the condition \eqref{eq:Condition for index 3} is enough to ensure solvability of \eqref{eq: Constrained Discrete Euler-Lagrange equations}, assuming the time step $\Delta t$ is sufficiently small, so there is no need to introduce slack degrees of freedom as in \eqref{eq:Augmented Lagrangian Big Mass Matrix}. If the mass matrix \eqref{eq:MassMatrix} was constant and nonsingular, then \eqref{eq: Constrained Discrete Euler-Lagrange equations} would result in the SHAKE algorithm, or in the RATTLE algorithm if one passes to the position-momentum formulation (see \cite{HLWGeometric}, \cite{MarsdenWestVarInt}).

Using \eqref{eq:Lagrangian2} and \eqref{eq:LN FEM2} we can write

\begin{equation}
\label{eq: Discrete Lagrangian for the Trapezoidal Rule in terms of L tilde}
\tilde L_d(y^j,X^j,y^{j+1},X^{j+1}) = \sum_{i=0}^N \tilde{L} \big( y^{j}_{i},y^{j}_{i+1},y^{j+1}_{i+1},y^{j+1}_{i}, X^{j}_{i},X^{j}_{i+1},X^{j+1}_{i+1},X^{j+1}_{i} \big),
\end{equation}

\noindent
where we defined the discrete Lagrangian $\tilde L : J^1 \tilde Y \longrightarrow \mathbb{R}$ by the formula

\begin{align}
\label{eq: Discrete Multisymplectic Lagrangian for the Trapezoidal Rule}
\tilde L\Big( y^{j}_{i},y^{j}_{i+1},y^{j+1}_{i+1},y^{j+1}_{i},X^{j}_{i},X^{j}_{i+1},X^{j+1}_{i+1},X^{j+1}_{i} \Big) &= \nonumber \\
\frac{\Delta t}{2} \int_{x_i}^{x_{i+1}} \tilde{\mathcal{L}}\Big(\bar\varphi^j(x), \bar X^j(x),\bar\varphi^j_x(x)&, \bar X^j_x(x),\bar\varphi_t(x),\bar X_t(x)  \Big)\,dx \nonumber \\
+\frac{\Delta t}{2} \int_{x_i}^{x_{i+1}} \tilde{\mathcal{L}}\Big(\bar\varphi^{j+1}(x)&, \bar X^{j+1}(x),\bar \varphi^{j+1}_x(x),\bar X^{j+1}_x(x),\bar \varphi_t(x),\bar X_t(x)  \Big)\,dx
\end{align}

\noindent
with

\begin{align}
\label{eq: Auxiliary terms for Discrete Lagrangian for the Trapezoidal Rule in terms of L tilde }
\bar\varphi^j(x) &= y^j_i \eta_i(x)+y^j_{i+1} \eta_{i+1}(x), \nonumber \\
\bar\varphi^j_x(x) &= \frac{y^{j}_{i+1}-y^{j}_{i}}{\Delta x}, \nonumber \\
\bar\varphi_t(x) &= \frac{y^{j+1}_i-y^j_i}{\Delta t} \eta_i(x)+\frac{y^{j+1}_{i+1}-y^j_{i+1}}{\Delta t} \eta_{i+1}(x),
\end{align}

\noindent
and similarly for $\bar X(x)$. Given the Lagrangian density $\tilde{\mathcal{L}}$ as in \eqref{eq:ParticularLagrangian} one can compute the integrals in \eqref{eq: Discrete Multisymplectic Lagrangian for the Trapezoidal Rule} explicitly. Suppose that the adaptation constraint $g$ has a \textquoteleft local' structure, for instance

\begin{equation}
\label{eq: Local structure of the constraint for k=1}
g_i(y^j,X^j)=G(y^j_i,y^j_{i+1},y^{j+1}_{i+1},y^{j+1}_i,X^j_i,X^j_{i+1},X^{j+1}_{i+1},X^{j+1}_i),
\end{equation}

\noindent
as in \eqref{eq:G for k1} or 

\begin{equation}
\label{eq: Local structure of the constraint for k=2}
g_i(y^j,X^j)=G(y_{\boxbar^l},X_{\boxbar^r}), \qquad \textrm{where $\boxbar^2=(j,i)$,}
\end{equation}

\noindent
as in \eqref{eq:G for k2}. It is straightforward to show that \eqref{DELF for k=1} or \eqref{DELF for k=2} are equivalent to \eqref{eq: Constrained Discrete Euler-Lagrange equations}, that is, the variational integrator defined by \eqref{eq: Constrained Discrete Euler-Lagrange equations} is also multisymplectic.

For reasons similar to the ones pointed out in Section~\ref{subsec: multisymp approach1}, the 2-nd and 4-th order Lobatto IIIA-IIIB methods that we used for our numerical computations are not multisymplectic.

\section{Numerical results}
\label{sec:numerics}

\subsection{The Sine-Gordon equation}
\label{sec: SG equation}

We applied the methods discussed in the previous sections to the Sine-Gordon equation

\begin{equation}
\label{eq:Sine-Gordon Equation}
\frac{\partial^2 \phi}{\partial t^2}-\frac{\partial^2 \phi}{\partial X^2}+\sin \phi=0.
\end{equation}

\noindent
This equation results from the (1+1)-dimensional scalar field theory with the Lagrangian density

\begin{equation}
\label{eq:SG Lagrangian density}
\mathcal{L}(\phi,\phi_X,\phi_t) = \frac{1}{2} \phi_t^2 - \frac{1}{2} \phi_X^2 - (1-\cos \phi).
\end{equation}

\noindent
The Sine-Gordon equation arises in many physical applications. For instance, it governs the propagation of dislocations in crystals, the evolution of magnetic flux in a long Josephson-junction transmission line or the modulation of a weakly unstable baroclinic wave packet in a two-layer fluid. It also has applications in the description of one-dimensional organic conductors, one-dimensional ferromagnets, liquid crystals, or in particle physics as a model for baryons (see \cite{DrazinSolitonsIntroduction}, \cite{Rajaraman}).

The Sine-Gordon equation has interesting soliton solutions. A single soliton traveling at the speed $v$ is given by

\begin{equation}
\label{eq: SG soliton}
\phi_S(X,t) = 4 \arctan \bigg[ \exp \bigg(\frac{X-X_0-vt}{\sqrt{1-v^2}}\bigg) \bigg].
\end{equation}

\noindent
It is depicted in Figure~\ref{fig: SG soliton}. The backscattering of two solitons, each traveling with the velocity $v$, is described by the formula

\begin{equation}
\label{eq: SG two solitons}
\phi_{SS}(X,t) = 4 \arctan \Bigg[ \frac{v \sinh (\frac{X}{\sqrt{1-v^2}} )} {\cosh(\frac{vt}{\sqrt{1-v^2}})} \Bigg].
\end{equation}

\noindent
It is depicted in Figure~\ref{fig: SG two solitons}. Note that if we restrict $X \geq0$, then this formula also gives a single soliton solution satisfying the boundary condition $\phi(0,t)=0$, that is, a soliton bouncing from a rigid wall.

\begin{figure}[tbp]
	\centering
		\includegraphics[width=\textwidth]{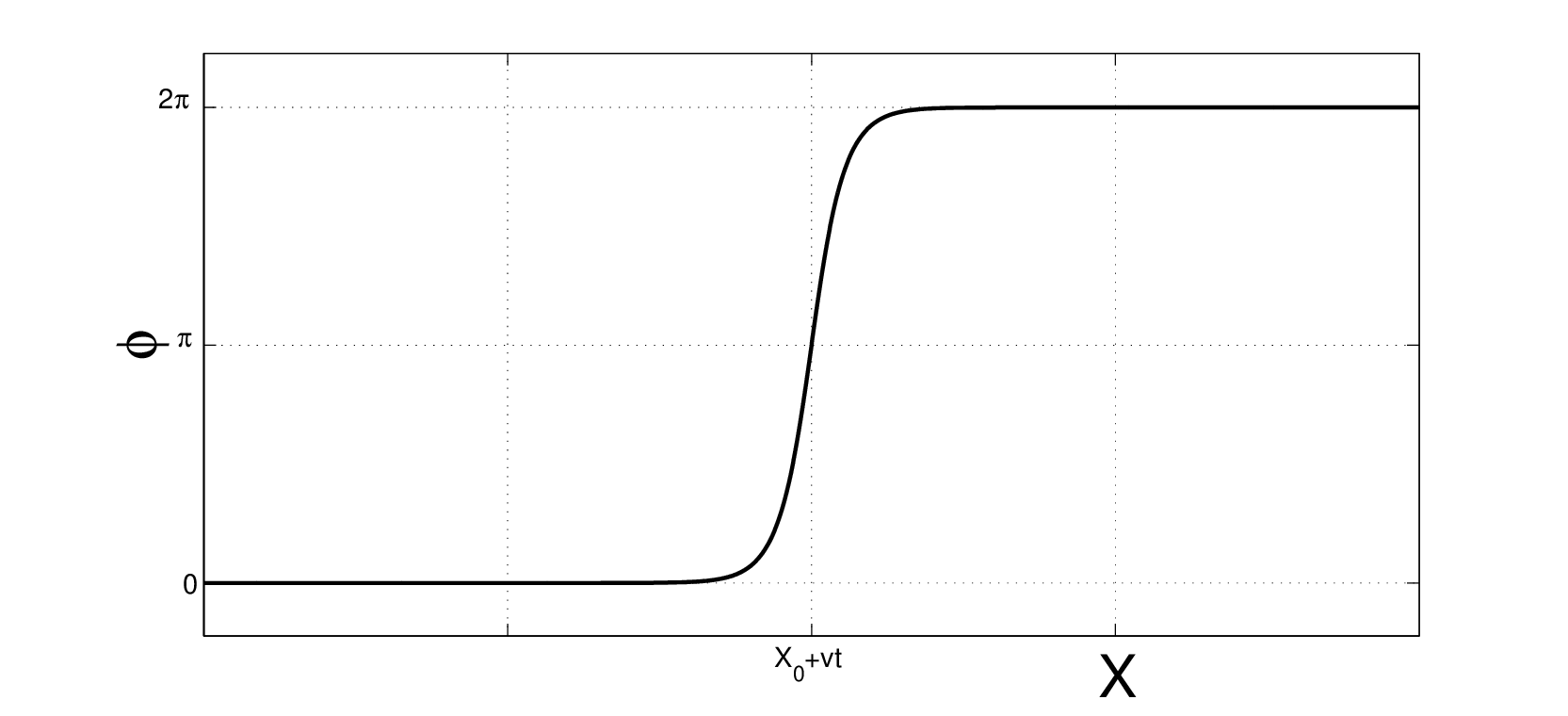}
		\caption{The single soliton solution of the Sine-Gordon equation.}
		\label{fig: SG soliton}
\end{figure}

\begin{figure}[tbp]

	\centering
		\includegraphics[width=\textwidth]{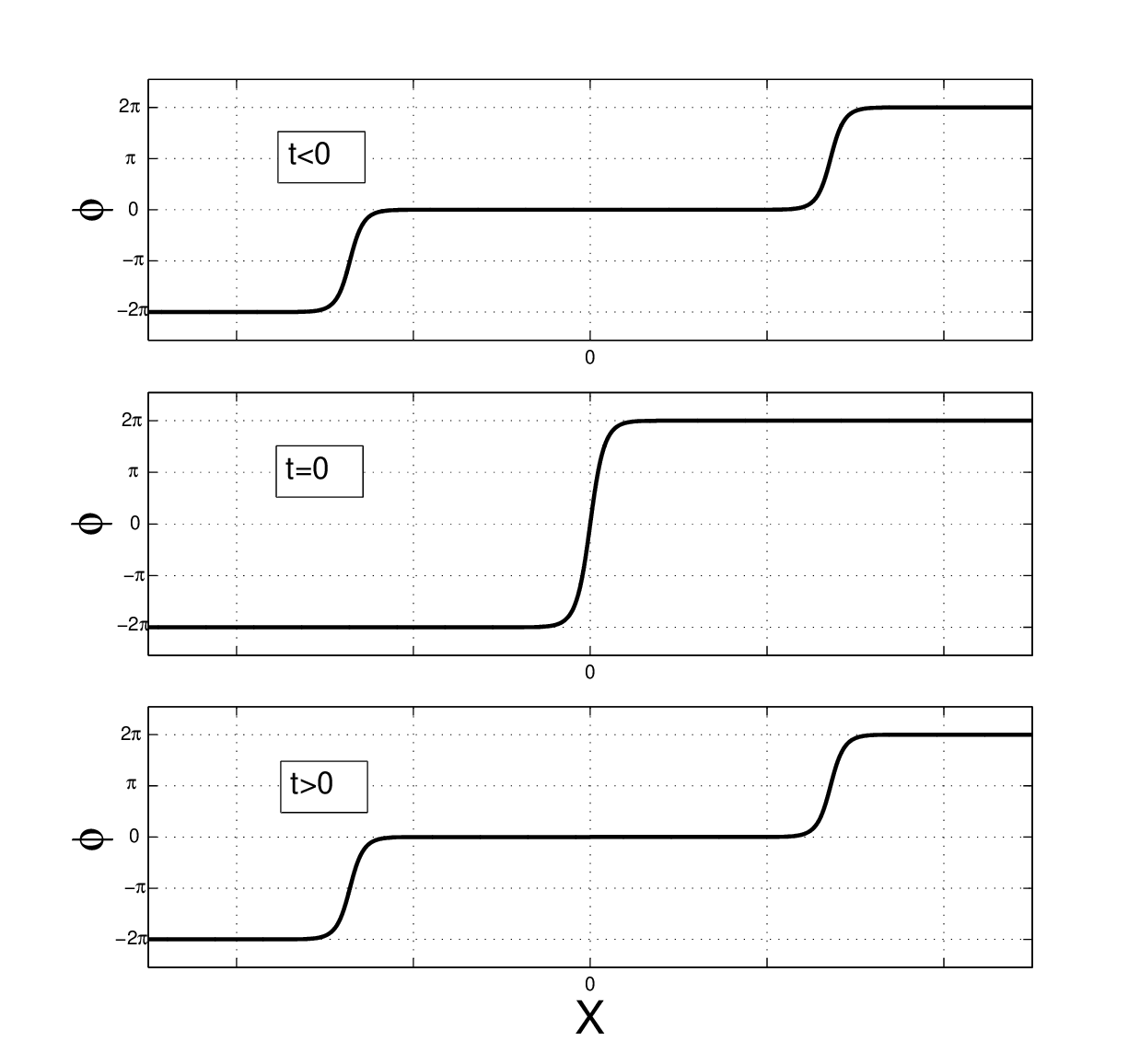}
		\caption{The two-soliton solution of the Sine-Gordon equation.}
		\label{fig: SG two solitons}
\end{figure}

\subsection{Generating consistent initial conditions}
\label{sec: Initial Conditions}

Suppose we specify the following initial conditions

\begin{align}
\label{eq:Initial Conditions}
\phi(X,0) &= a(X),\nonumber \\
\phi_t(X,0) &= b(X),
\end{align}

\noindent
and assume they are consistent with the boundary conditions \eqref{eq:bndcond}. In order to determine appropriate consistent initial conditions for \eqref{eq:Hamiltonian DAE} and \eqref{eq:E-L equations - augmented} we need to solve several equations. First we solve for the $y_i$'s and $X_i$'s. We have $y_0=\phi_L$, $y_{N+1}=\phi_R$, $X_0=0$, $X_{N+1}=X_{max}$. The rest are determined by solving the system

\begin{align}
\label{eq:Initial Conditions for y and X}
y_i&=a(X_i), \nonumber \\
0&=g_i(y_1,\ldots,y_N,X_1,\ldots,X_N),
\end{align}

\noindent
for $i=1,\ldots,N$. This is a system of $2N$ nonlinear equations for $2N$ unknowns. We solve it using Newton's method. Note, however, that we do not \emph{a priori} know good starting points for Newton's iterations. If our initial guesses are not close enough to the desired solution, the iterations may converge to the wrong solution or may not converge at all. In our computations we used the constraints \eqref{eq:ArclengthConstraint}. We found that a very simple variant of a homotopy continuation method worked very well in our case. Note that for $\alpha=0$ the set of constraints \eqref{eq:ArclengthConstraint} generates a uniform mesh. In order to solve \eqref{eq:Initial Conditions for y and X} for some $\alpha>0$, we split $[0,\alpha]$ into $d$ subintervals by picking $\alpha_k = (k/d)\cdot\alpha$ for $k=1,\ldots,d$. We then solved \eqref{eq:Initial Conditions for y and X} with $\alpha_1$ using the uniformly spaced mesh points $X^{(0)}_i = (i/(N+1))\cdot X_{max}$ as our initial guess, resulting in $X^{(1)}_i$ and $y^{(1)}_i$. Then we solved \eqref{eq:Initial Conditions for y and X} with $\alpha_2$ using $X^{(1)}_i$ and $y^{(1)}_i$ as the initial guesses, resulting in $X^{(2)}_i$ and $y^{(2)}_i$. Continuing in this fashion, we got $X^{(d)}_i$ and $y^{(d)}_i$ as the numerical solution to \eqref{eq:Initial Conditions for y and X} for the original value of $\alpha$. Note that for more complicated initial conditions and constraint functions, predictor-corrector methods should be used---see \cite{NumericalContinuation} for more information. Another approach to solving \eqref{eq:Initial Conditions for y and X} could be based on relaxation methods (see \cite{HuangRussellREVIEW}, \cite{HuangRussellBOOK}).

Next, we solve for the initial values of the velocities $\dot y_i$ and $\dot X_i$. Since $\varphi(x,t)=\phi(X(x,t),t)$, we have $\varphi_t(x,t)=\phi_X(X(x,t),t)X_t(x,t)+\phi_t(X(x,t),t)$. We also require that the velocities be consistent with the constraints. Hence the linear system

\begin{align}
\label{eq:Initial Conditions for velocities}
\dot y_i &= a'(X_i) \dot X_i + b(X_i), \quad\quad\quad\quad i=1,\ldots,N \nonumber \\
0&= \frac{\partial g}{\partial y}(y,X) \dot y + \frac{\partial g}{\partial X}(y,X) \dot X.
\end{align}

\noindent
This is a system of $2N$ linear equations for the $2N$ unknowns $\dot y_i$ and $\dot X_i$, where $y=(y_1,\ldots,y_N)$ and $X=(X_1,\ldots,X_N)$. We can use those velocities to compute the initial values of the conjugate momenta. For the control-theoretic approach we use $p_i=\partial \tilde L_N/\partial \dot y_i$, as in Section~\ref{sec:timeintegration}, and for the Lagrange multiplier approach we use \eqref{eq:MatrixLegendre}. In addition, for the Lagrange multiplier approach we also have the initial values for the slack variables $r_i=0$ and their conjugate momenta $B_i=\partial \tilde L^A_N/\partial \dot r_i=0$. It is also useful to use \eqref{eq:Solving for the Lagrange multipliers} to compute the initial values of the Lagrange multipliers $\lambda_i$ that can be used as initial guesses in the first iteration of the Lobatto IIIA-IIIB algorithm. The initial guesses for the slack Lagrange multipliers are trivially $\mu_i=0$. Note that both $\lambda$ and $\mu$ are algebraic variables, so their values at each time step are completely determined by the Lobatto IIIA-IIIB algorithm (see \cite{HLWGeometric}, \cite{JayLobatto}, \cite{JaySPARK} for details), and therefore no further initial or boundary conditions are necessary.

\subsection{Convergence}
\label{sec: Convergence}

In order to test the convergence of our methods as the number of mesh points $N$ is increased, we considered a single soliton bouncing from two rigid walls at $X=0$ and $X=X_{max}=25$. We imposed the boundary conditions $\phi_L=0$ and $\phi_R=2\pi$, and as initial conditions we used \eqref{eq: SG soliton} with $X_0=12.5$ and $v=0.9$. It is possible to obtain the exact solution to this problem by considering a multi-soliton solution to \eqref{eq:Sine-Gordon Equation} on the whole real line. Such a solution can be obtained using a B\"{a}cklund transformation (see \cite{DrazinSolitonsIntroduction}, \cite{Rajaraman}). However, the formulas quickly become complicated and, technically, one would have to consider an infinite number of solitons. Instead, we constructed a nearly exact solution by approximating the boundary interactions with \eqref{eq: SG two solitons}:

\begin{equation}
\label{eq:SG nearly exact solution}
\phi_{exact}(X,t)=
\left\{
\begin{array}{ll}
\phi_{SS}\big(X-X_{max},t-(4n+1)T\big)+2 \pi & \quad \textrm{if $t \in \big[4nT,(4n+2)T\big)$,} \\
\phi_{SS}\big(X,t-(4n+3)T\big) & \quad \textrm{if $t \in \big[(4n+2)T,(4n+4)T\big)$,}
\end{array}
\right.
\end{equation}

\noindent
where $n$ is an integer number and $T$ satisfies $\phi_{SS}(X_{max}/2,T)=\pi$ (we numerically found $T\approx13.84$). Given how fast \eqref{eq: SG soliton} and \eqref{eq: SG two solitons} approach its asymptotic values, one may check that \eqref{eq:SG nearly exact solution} can be considered exact to machine precision.

We performed numerical integration with the constant time step $\Delta t=0.01$ up to the time $T_{max}=50$. For the control-theoretic strategy we used the 1-stage and 2-stage Gauss method (2-nd and 4-th order respectively), and the 2-stage and 3-stage Lobatto IIIA-IIIB method (also 2-nd/4-th order). For the Lagrange multiplier strategy we used the 2-stage and 3-stage Lobatto IIIA-IIIB method for constrained mechanical systems (2-nd/4-th order). See \cite{HLWGeometric}, \cite{HWODE1}, \cite{HWODE2} for more information about the mentioned symplectic Runge-Kutta methods. We used the constraints \eqref{eq:ArclengthConstraint} based on the generalized arclength density \eqref{eq:ArclengthDensity}. We chose the scaling parameter to be $\alpha=2.5$, so that approximately half of the available mesh points were concentrated in the area of high gradient. A few example solutions are presented in Figure~\ref{fig: Numerical solution Lagrange N=15}-\ref{fig: Numerical solution Control-Theoretic Gauss N=31}. Note that the Lagrange multiplier strategy was able to accurately capture the motion of the soliton with merely 17 mesh points (that is, $N=15$). The trajectories of the mesh points for several simulations are depicted in Figure~\ref{fig: Mesh point trajectories for Lagrange multiplier strategy} and Figure~\ref{fig: Mesh point trajectories for control-theoretic strategy}. An example solution computed on a uniform mesh is depicted in Figure~\ref{fig: Numerical solution on a uniform mesh Gauss N=31}.

For the convergence test, we performed simulations for several $N$ in the range 15-127. For comparison, we also computed solutions on a uniform mesh for $N$ in the range 15-361. The numerical solutions were compared against the solution \eqref{eq:SG nearly exact solution}. The $L^\infty$ errors are depicted in Figure~\ref{fig: Convergence}. The $L^\infty$ norms were evaluated over all nodes and over all time steps. Note that in case of a uniform mesh the spacing between the nodes is $\Delta x = X_{max}/(N+1)$, therefore the errors are plotted versus $(N+1)$. The Lagrange multiplier strategy proved to be more accurate than the control-theoretic strategy. As the number of mesh points is increased, the uniform mesh solution becomes quadratically convergent, as expected, since we used linear finite elements for spatial discretization. The control-theoretic strategy also shows near quadratic convergence, whereas the Lagrange multiplier method seems to converge slightly slower. While there are very few analytical results regarding the convergence of $r$-adaptive methods, it has been observed that the rate of convergence depends on several factors, including the chosen mesh density function. Our results are consistent with the convergence rates reported in \cite{BeckettNUMERICAL} and \cite{Wacher}. Both papers deal with the viscous Burgers' equation, but consider different initial conditions. Computations with the arclength density function converged only linearly in \cite{BeckettNUMERICAL}, but quadratically in \cite{Wacher}.

\begin{figure}[tbp]
	\centering
		\includegraphics[width=\textwidth]{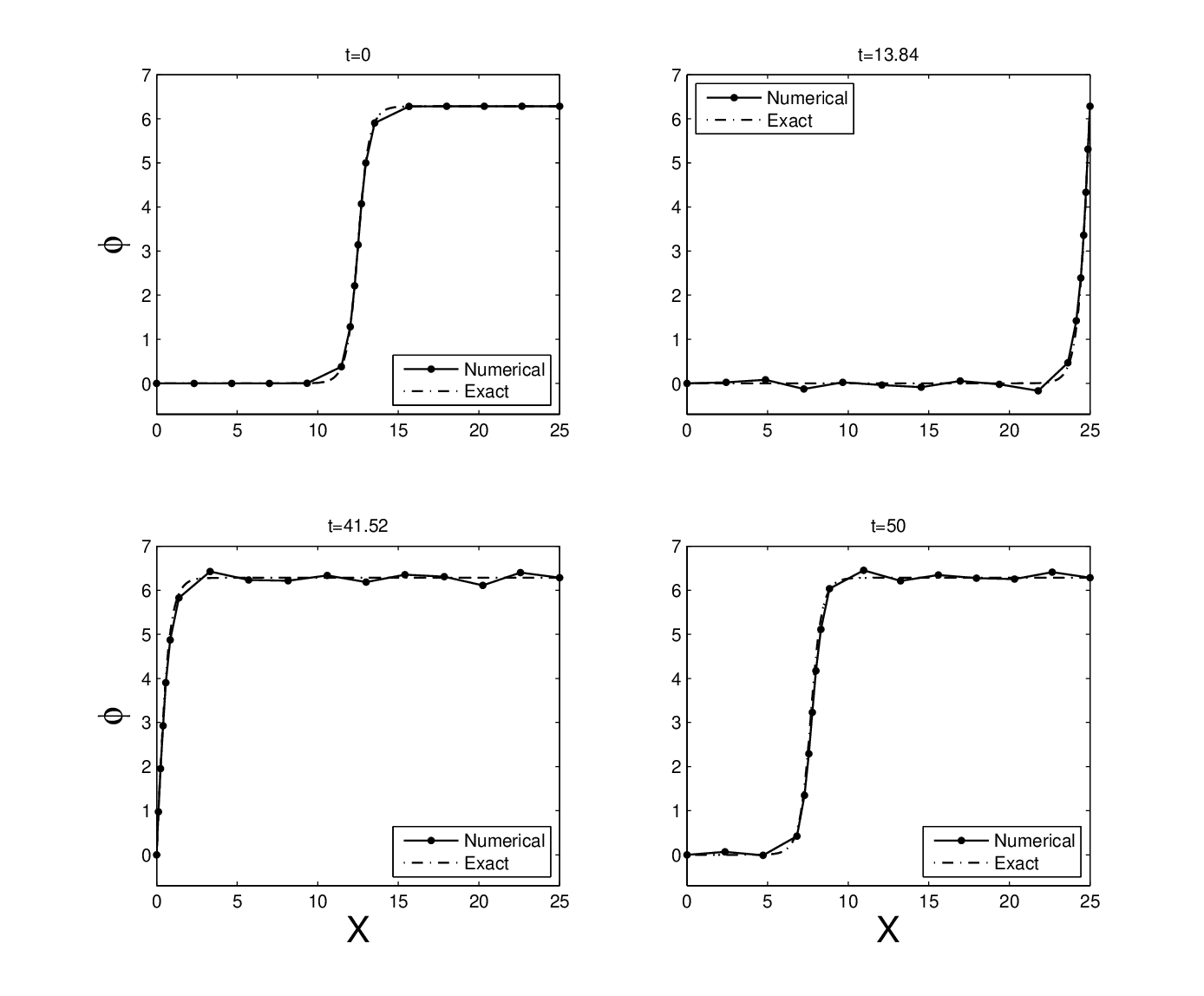}
		\caption{The single soliton solution obtained with the Lagrange multiplier strategy for $N=15$. Integration in time was performed using the 4-th order Lobatto IIIA-IIIB scheme for constrained mechanical systems. The soliton moves to the right with the initial velocity $v=0.9$, bounces from the right wall at $t=13.84$ and starts moving to the left with the velocity $v=-0.9$, towards the left wall, from which it bounces at $t=41.52$.}
		\label{fig: Numerical solution Lagrange N=15}
\end{figure}

\begin{figure}[tbp]
	\centering
		\includegraphics[width=\textwidth]{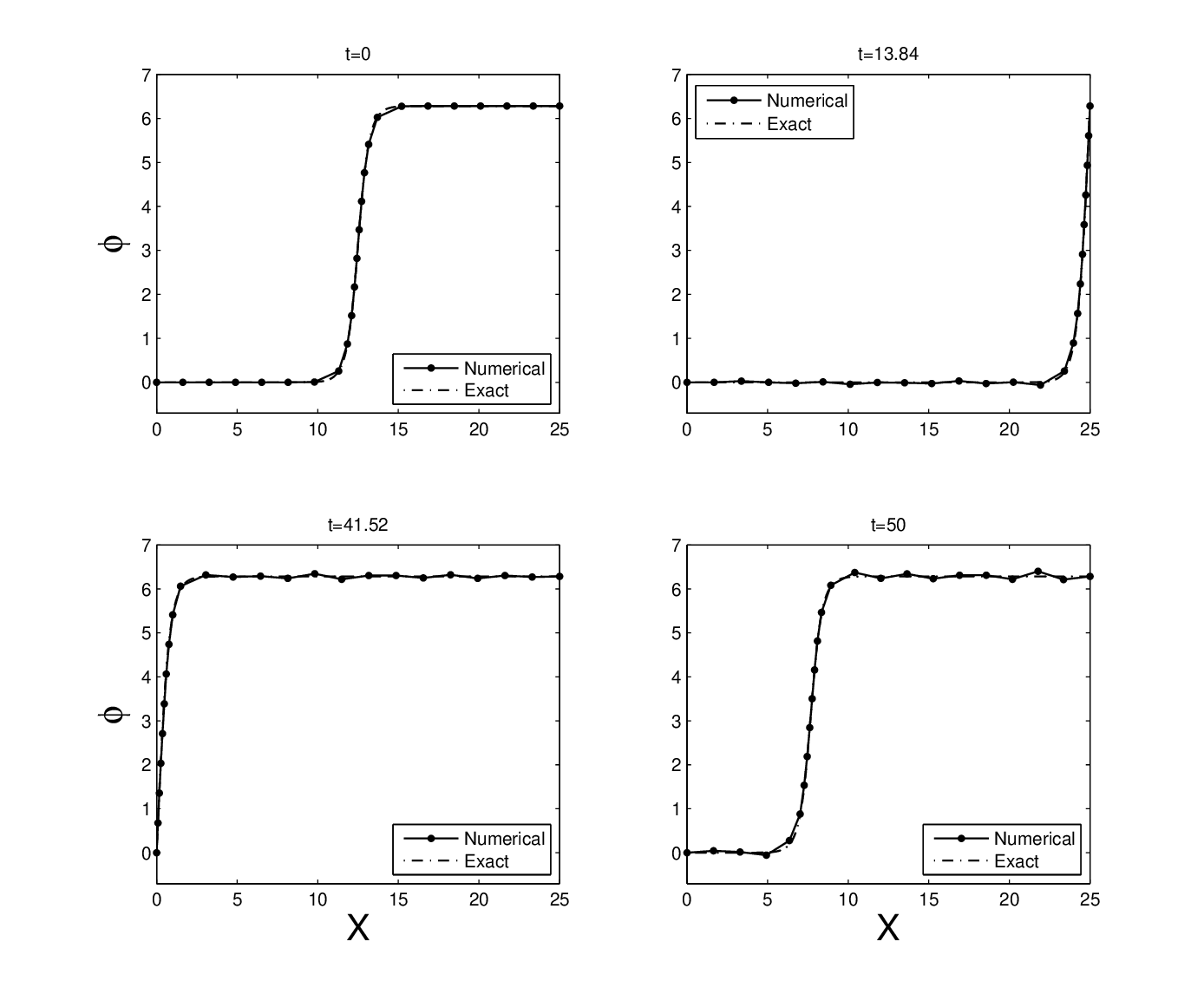}
		\caption{The single soliton solution obtained with the Lagrange multiplier strategy for $N=31$. Integration in time was performed using the 4-th order Lobatto IIIA-IIIB scheme for constrained mechanical systems.}
		\label{fig: Numerical solution Lagrange N=31}
\end{figure}

\begin{figure}[tbp]
	\centering
		\includegraphics[width=\textwidth]{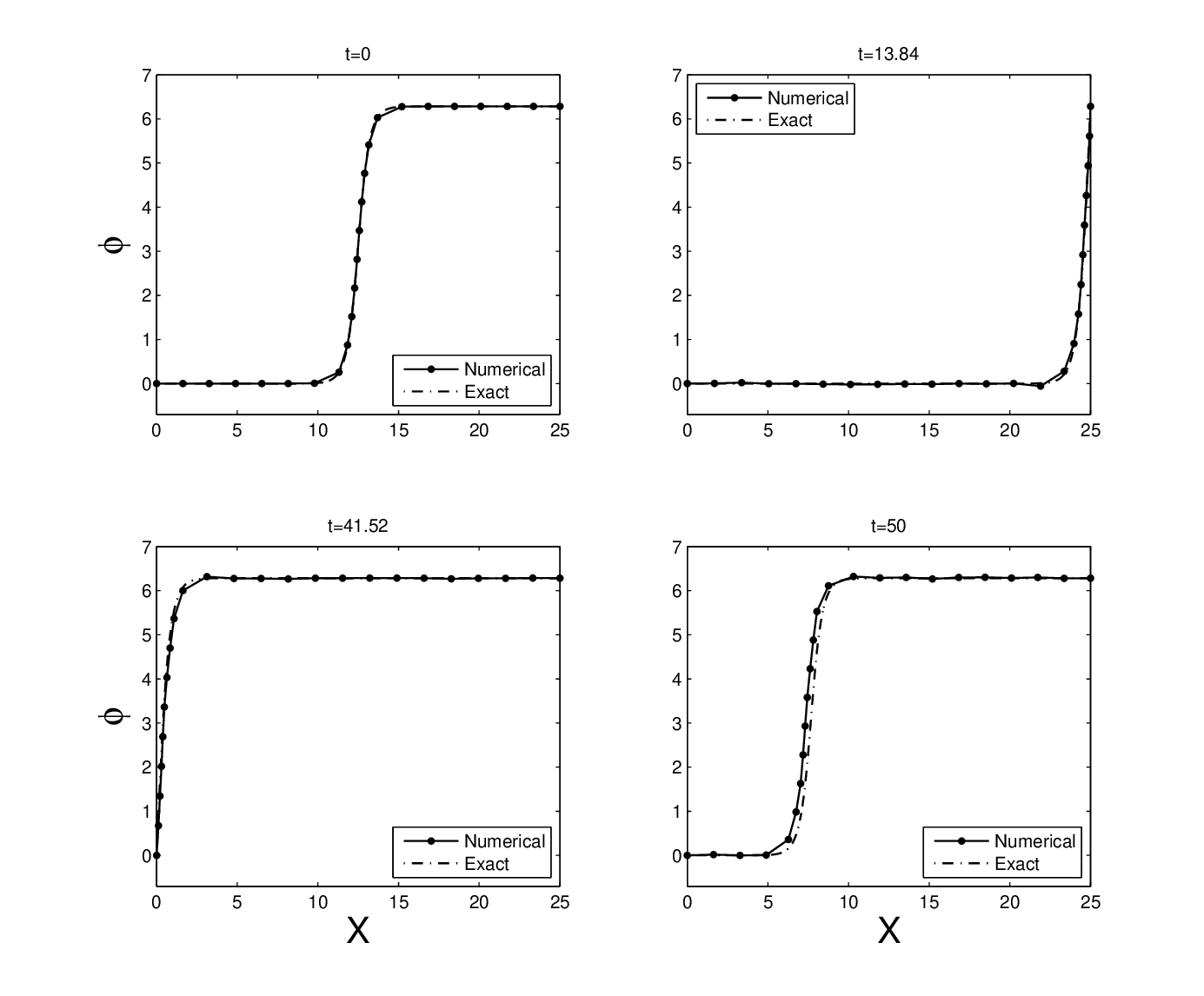}
		\caption{The single soliton solution obtained with the control-theoretic strategy for $N=22$. Integration in time was performed using the 4-th order Gauss scheme. Integration with the 4-th order Lobatto IIIA-IIIB yields a very similar level of accuracy.}
		\label{fig: Numerical solution Control-Theoretic Gauss N=22}
\end{figure}

\begin{figure}[tbp]
	\centering
		\includegraphics[width=\textwidth]{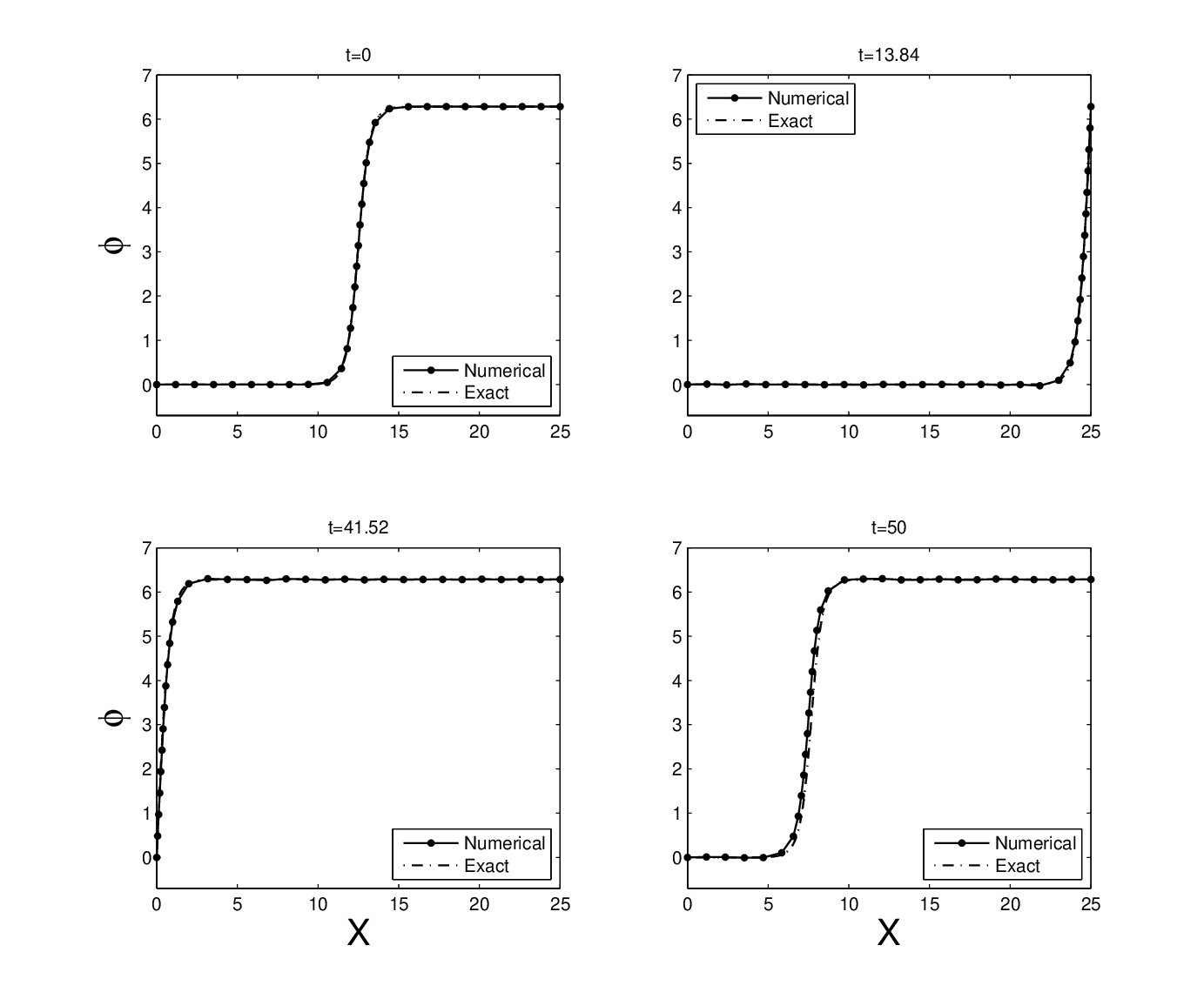}
		\caption{The single soliton solution obtained with the control-theoretic strategy for $N=31$. Integration in time was performed using the 4-th order Gauss scheme. Integration with the 4-th order Lobatto IIIA-IIIB yields a very similar level of accuracy.}
		\label{fig: Numerical solution Control-Theoretic Gauss N=31}
\end{figure}

\begin{figure}[tbp]
	\centering
		\includegraphics[width=\textwidth]{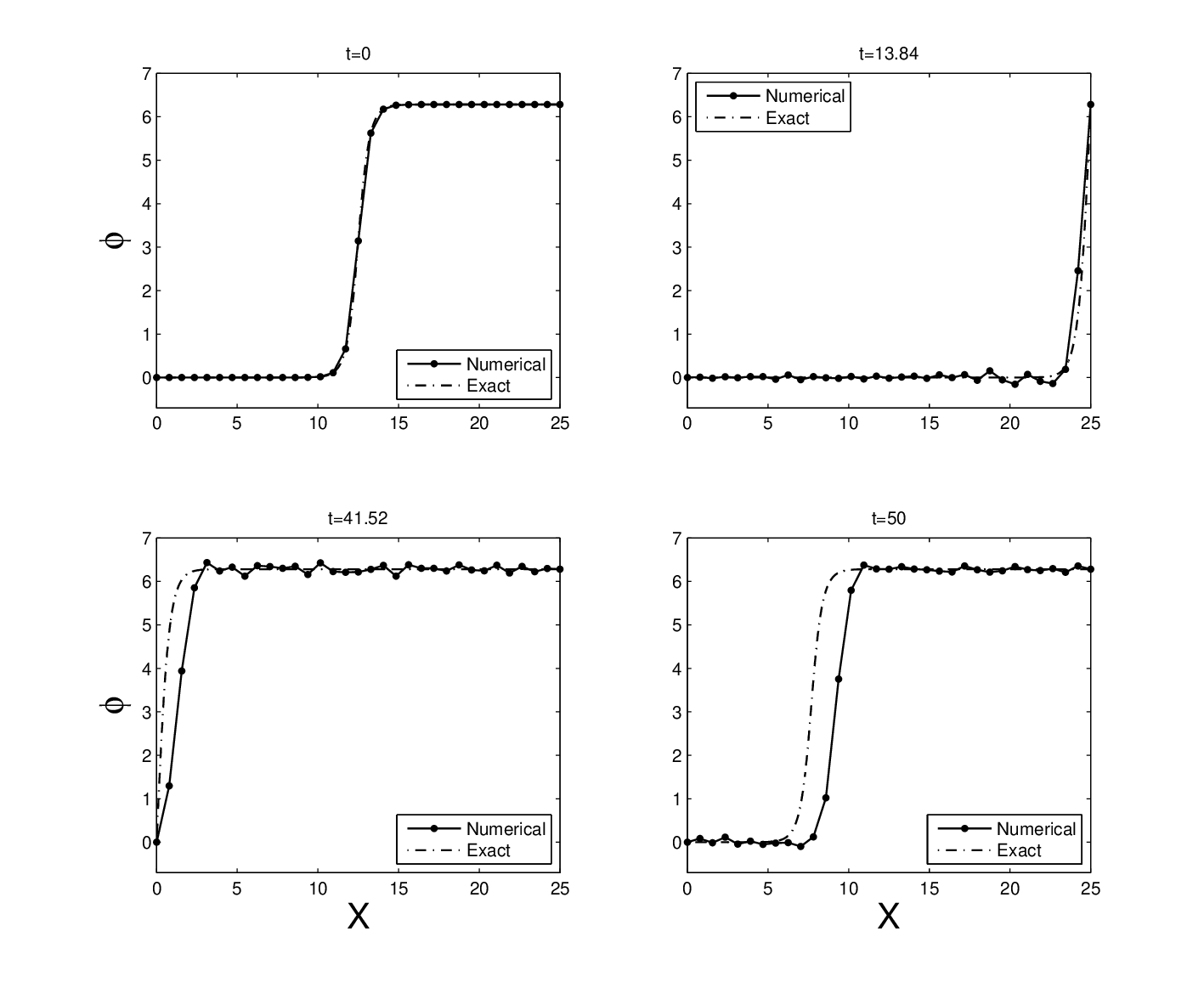}
		\caption{The single soliton solution computed on a uniform mesh with $N=31$. Integration in time was performed using the 4-th order Gauss scheme. Integration with the 4-th order Lobatto IIIA-IIIB yields a very similar level of accuracy.}
		\label{fig: Numerical solution on a uniform mesh Gauss N=31}
\end{figure}

\begin{figure}[tbp]
	\centering
			\begin{tabular}{cc}
				\includegraphics[width=0.38\textwidth]{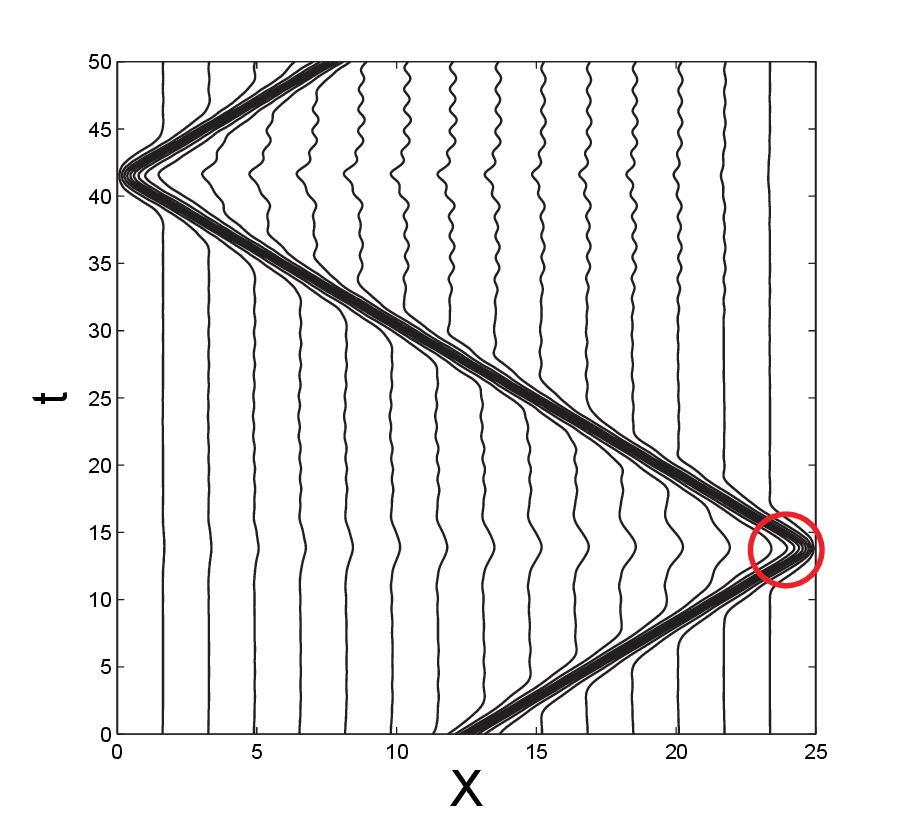} \hspace*{-.8cm} \includegraphics[width=0.12\textwidth]{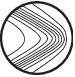}
    		&
				\includegraphics[width=0.38\textwidth]{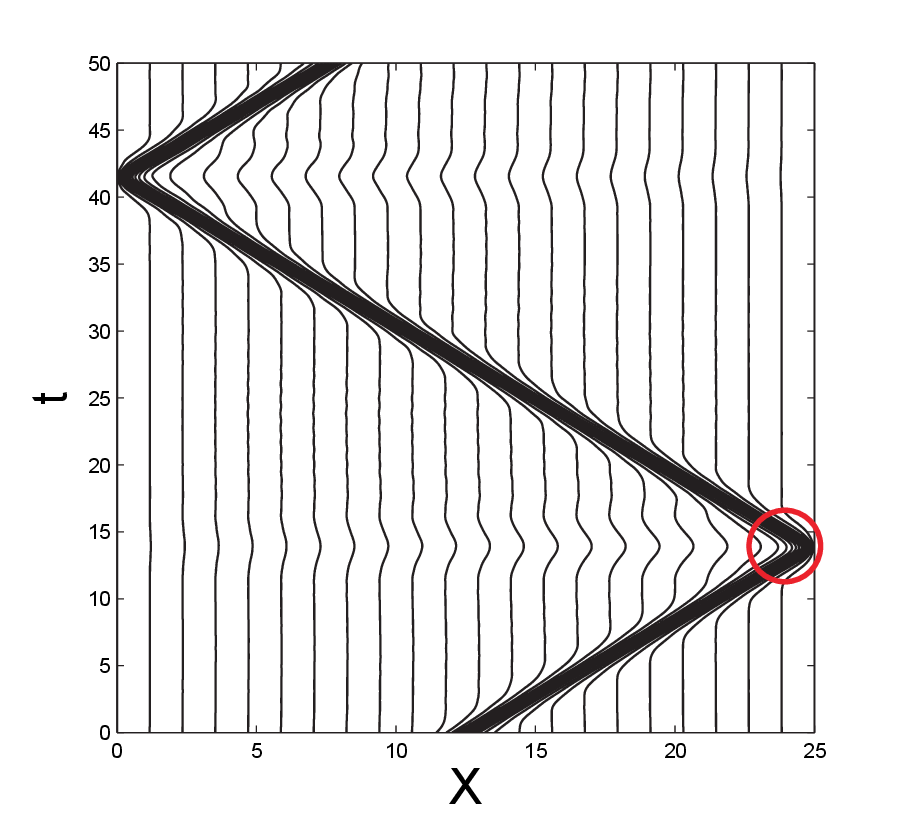} \hspace*{-.8cm} \includegraphics[width=0.12\textwidth]{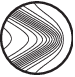}
			\end{tabular}

	\caption{The mesh point trajectories (with zoomed-in insets) for the Lagrange multiplier strategy for $N=22$ (left) and $N=31$ (right). Integration in time was performed using the 4-th order Lobatto IIIA-IIIB scheme for constrained mechanical systems.}
	\label{fig: Mesh point trajectories for Lagrange multiplier strategy}
\end{figure}

\begin{figure}[tbp]
	\centering
			\begin{tabular}{cc}
				\includegraphics[width=0.38\textwidth]{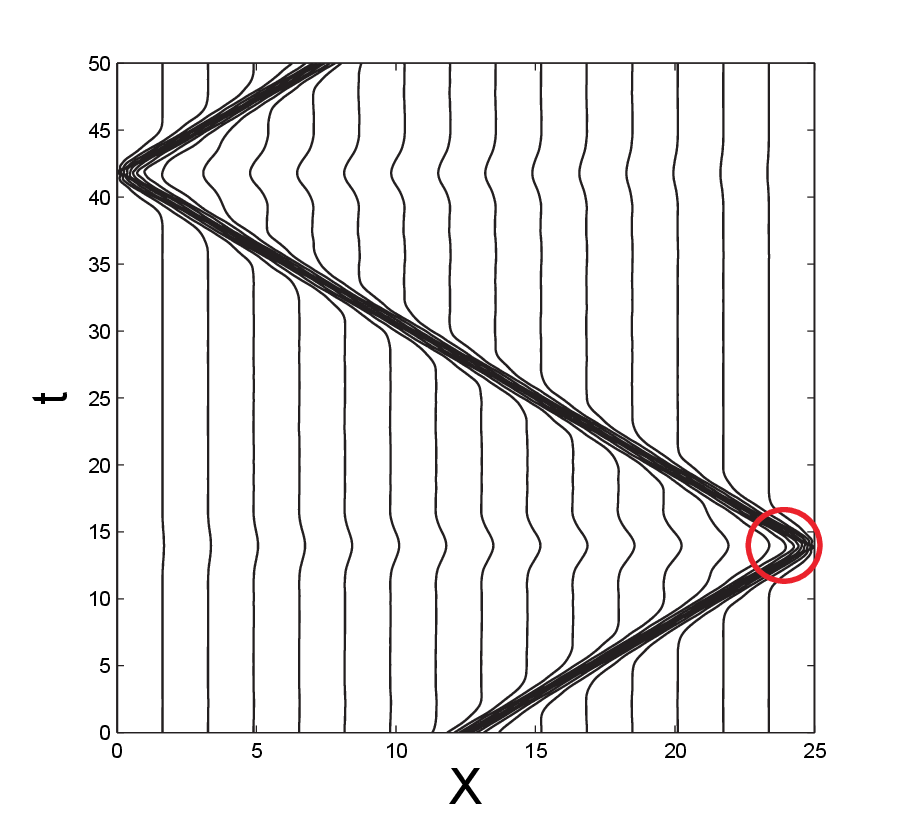} \hspace*{-.8cm} \includegraphics[width=0.12\textwidth]{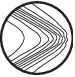}
    		&
				\includegraphics[width=0.38\textwidth]{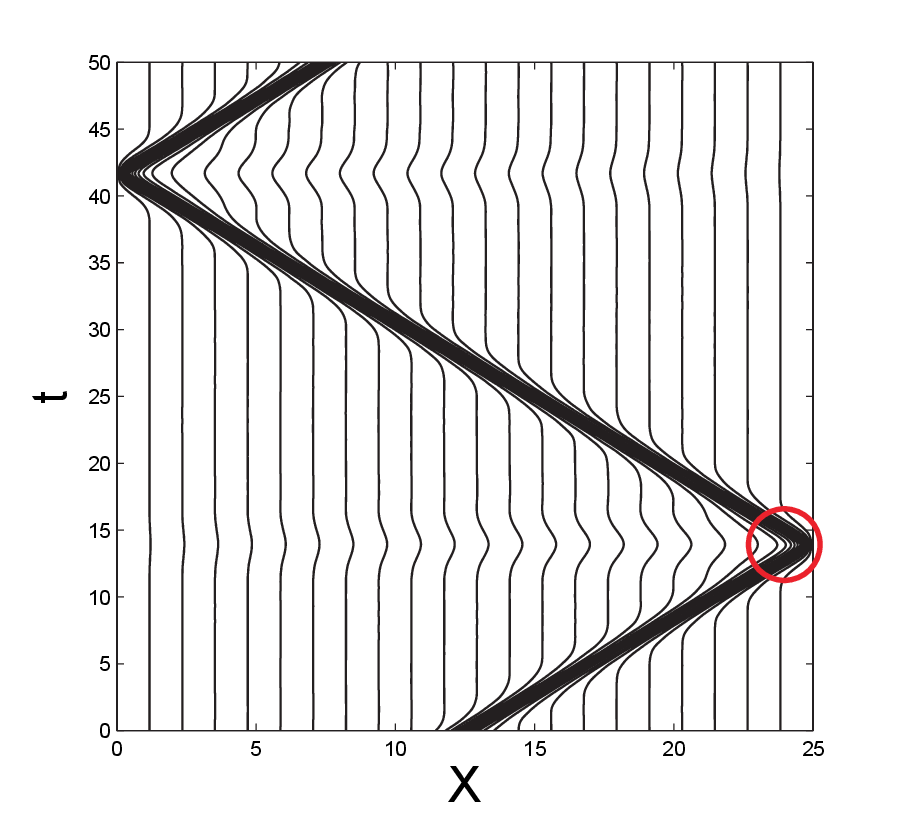} \hspace*{-.8cm} \includegraphics[width=0.12\textwidth]{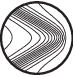}
			\end{tabular}

	\caption{The mesh point trajectories (with zoomed-in insets) for the control-theoretic strategy for $N=22$ (left) and $N=31$ (right). Integration in time was performed using the 4-th order Gauss scheme. Integration with the 4-th order Lobatto IIIA-IIIB yields a very similar result.}
	\label{fig: Mesh point trajectories for control-theoretic strategy}
\end{figure}

\begin{figure}[tbp]
	\centering
		\includegraphics[width=0.8\textwidth]{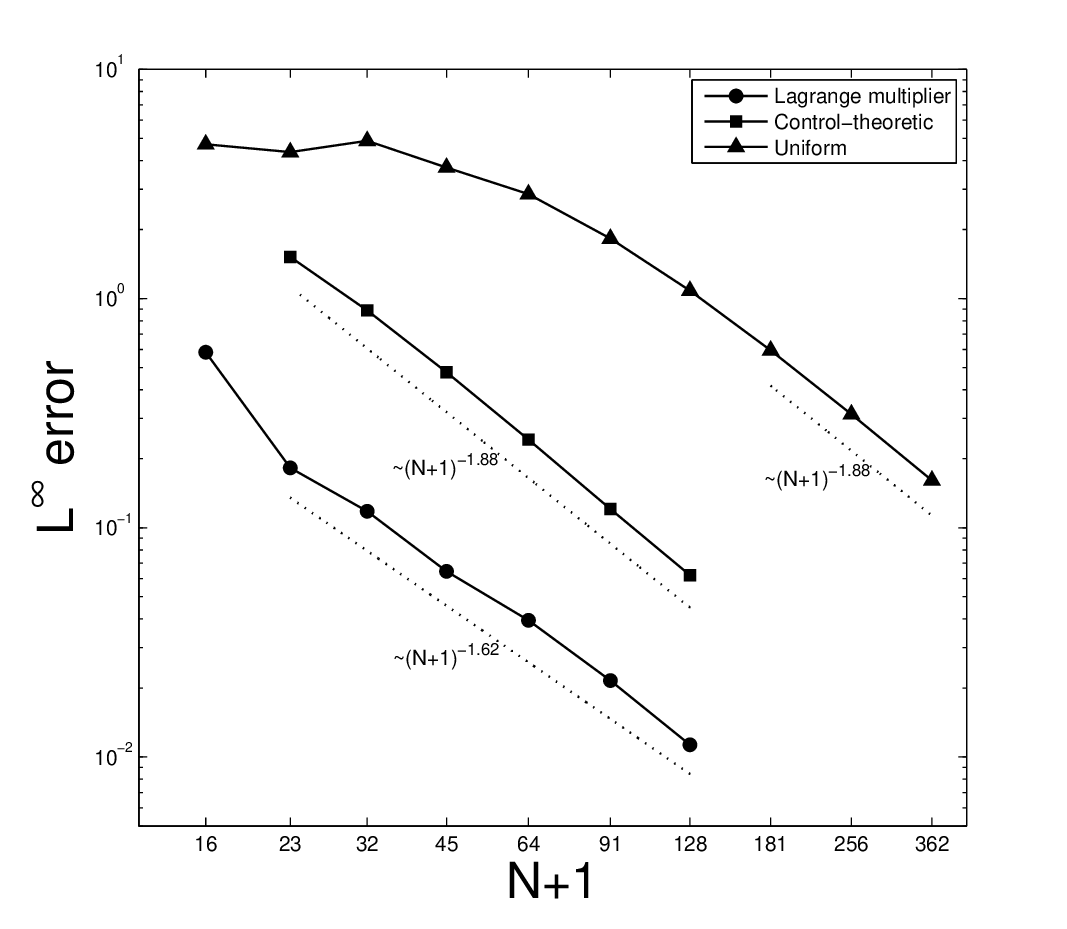}
		\caption{Comparison of the convergence rates of the discussed methods. Integration in time was performed using the 4-th order Lobatto IIIA-IIIB method for constrained systems in case of the Lagrange multiplier strategy, and the 4-th order Gauss scheme in case of both the control-theoretic strategy and the uniform mesh simulation. The 4-th order Lobatto IIIA-IIIB scheme for the control-theoretic strategy and the uniform mesh simulation yields a very similar level of accuracy. Also, using 2-nd order integrators gives very similar error plots.}
		\label{fig: Convergence}
\end{figure}

\subsection{Energy conservation}
\label{sec: Energy Conservation}

As we pointed out in Section~\ref{sec:BEA}, the true power of variational and symplectic integrators for mechanical systems lies in their excellent conservation of energy and other integrals of motion, even when a big time step is used. In order to test the energy behavior of our methods, we performed simulations of the Sine-Gordon equation over longer time intervals. We considered two solitons bouncing from each other and from two rigid walls at $X=0$ and $X_{max}=25$. We imposed the boundary conditions $\phi_L=-2 \pi$ and $\phi_R=2\pi$, and as initial conditions we used $\phi(X,0)=\phi_{SS}(X-12.5,-5)$ with $v=0.9$. We ran our computations on a mesh consisting of 27 nodes (N=25). Integration was performed with the time step $\Delta t=0.05$, which is rather large for this type of simulations. The scaling parameter in \eqref{eq:ArclengthConstraint} was set to $\alpha=1.5$, so that approximately half of the available mesh points were concentrated in the areas of high gradient. An example solution is presented in Figure~\ref{fig: Numerical two kink solution}.

\begin{figure}[tbp]
	\centering
		\includegraphics[width=\textwidth]{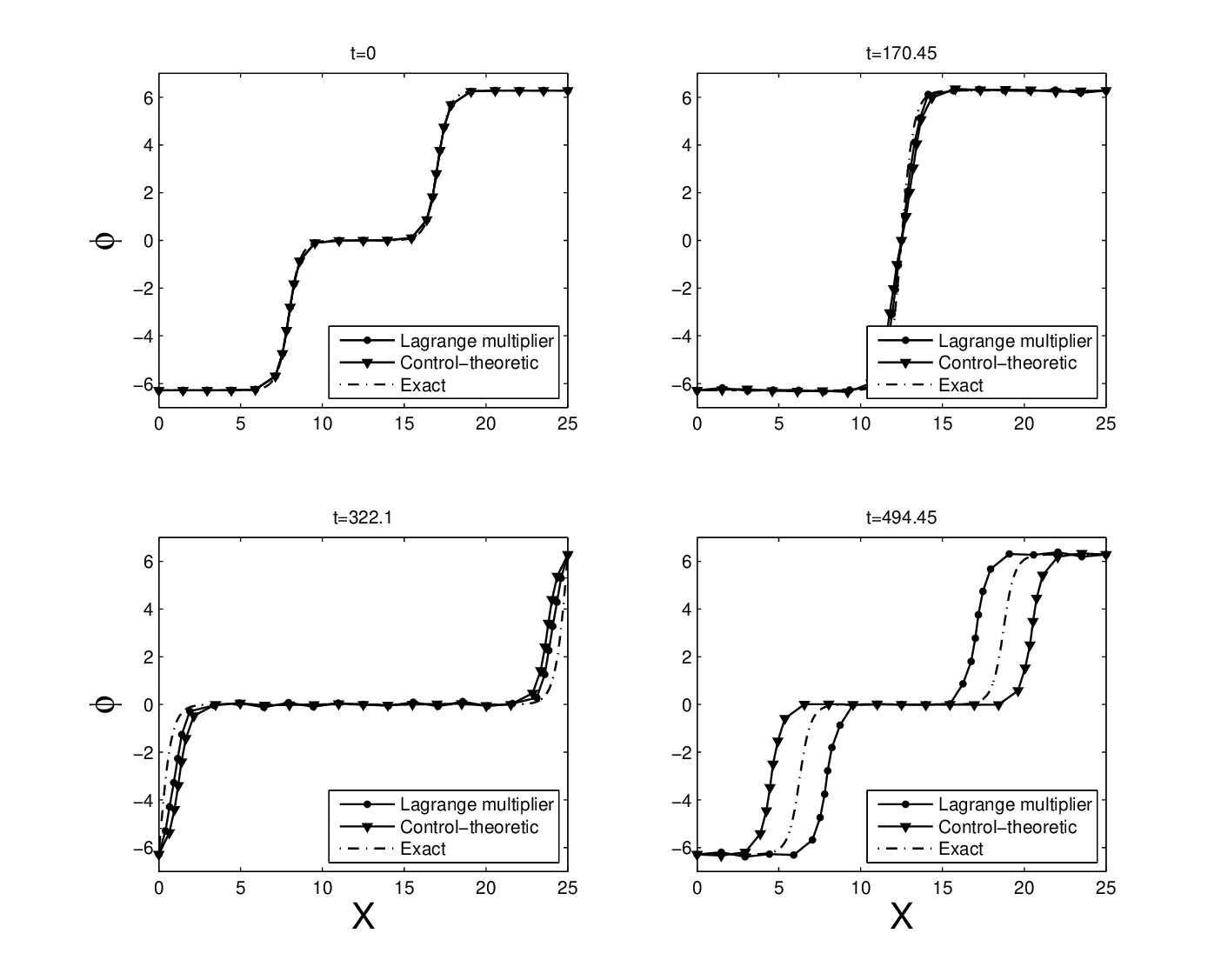}
		\caption{The two-soliton solution obtained with the control-theoretic and Lagrange multiplier strategies for $N=25$. Integration in time was performed using the 4-th order Gauss quadrature for the control-theoretic approach, and the 4-th order Lobatto IIIA-IIIB quadrature for constrained mechanical systems in case of the Lagrange multiplier approach. The solitons initially move towards each other with the velocities $v=0.9$, then bounce off of each other at $t=5$ and start moving towards the walls, from which they bounce at $t=18.79$. The solitons bounce off of each other again at $t=32.57$. This solution is periodic in time with the period $T_{period}=27.57$. The nearly exact solution was constructed in a similar fashion as \eqref{eq:SG nearly exact solution}. As the simulation progresses, the Lagrange multiplier solution gets ahead of the exact solution, whereas the control-theoretic solution lags behind.}
		\label{fig: Numerical two kink solution}
\end{figure}

The exact energy of the two-soliton solution can be computed using \eqref{eq:Energy1}. It is possible to compute that integral explicitly to obtain $E = 16/\sqrt{1-v^2} \approx 36.71$. The energy associated with the semi-discrete Lagrangian \eqref{eq:ParticularLN} can be expressed by the formula

\begin{equation}
\label{eq: Discrete Energy for Sine Gordon}
E_N=\frac{1}{2} \dot q^T \tilde M_N(q) \, \dot q + R_N(q),
\end{equation}

\noindent
where $R_N$ was defined in \eqref{eq:RN} and for our Sine-Gordon system is given by

\begin{equation}
\label{eq: RN for Sine Gordon}
R_N(q)=\sum_{k=0}^N \bigg[ \frac{1}{2} \bigg(\frac{y_{k+1}-y_k}{X_{k+1}-X_k}\bigg)^2 + 1 - \frac{\sin y_{k+1}- \sin y_k}{y_{k+1}-y_k}  \bigg] (X_{k+1}-X_k),
\end{equation}

\noindent
and $M_N$ is the mass matrix \eqref{eq:MassMatrix}. The energy $E_N$ is an approximation to \eqref{eq:Energy1} if the integrand is sampled at the nodes $X_0$,$\ldots$,$X_{N+1}$ and then piecewise linearly approximated. Therefore, we used $E_N$ to compute the energy of our numerical solutions.

The energy plots for the Lagrange multiplier strategy are depicted in Figure~\ref{fig: Energy Plot - Lagrange multiplier}. We can see that the energy stays nearly constant in the presented time interval, showing only mild oscillations, which are reduced as higher order of integration in time is used. The energy plots for the control-theoretic strategy are depicted in Figure~\ref{fig: Energy Plot - control theoretic}. In this case the discrete energy is more erratic and not as nearly preserved. Moreover, the symplectic Gauss and Lobatto methods show virtually the same energy behavior as the non-symplectic Radau IIA method, which is known for its excellent stability properties when applied to stiff differential equations (see \cite{HWODE2}). It seems that we do not gain much by performing symplectic integration in this case. It is consistent with our observations in Section~\ref{sec:BEA} and shows that the control-theoretic strategy does not take the full advantage of the underlying geometry. 

\begin{figure}[tbp]
	\centering
		\includegraphics[width=\textwidth]{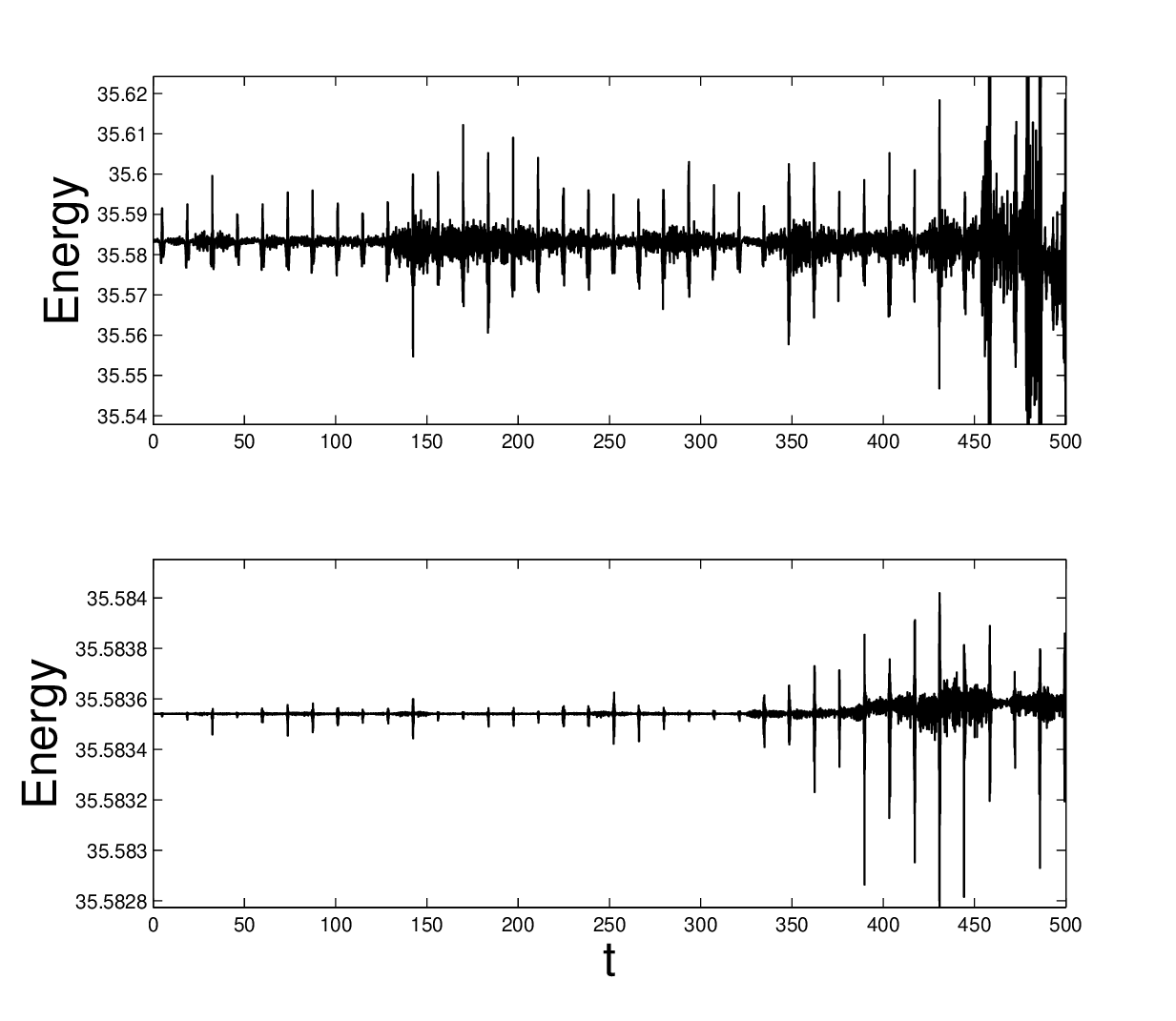}
		\caption{The discrete energy $E_N$ for the Lagrange multiplier strategy. Integration in time was performed with the 2-nd (top) and 4-th (bottom) order Lobatto IIIA-IIIB method for constrained mechanical systems. The spikes correspond to the times when the solitons bounce off of each other or of the walls.}
		\label{fig: Energy Plot - Lagrange multiplier}
\end{figure}

\begin{figure}[tbp]
	\centering
		\includegraphics[width=\textwidth]{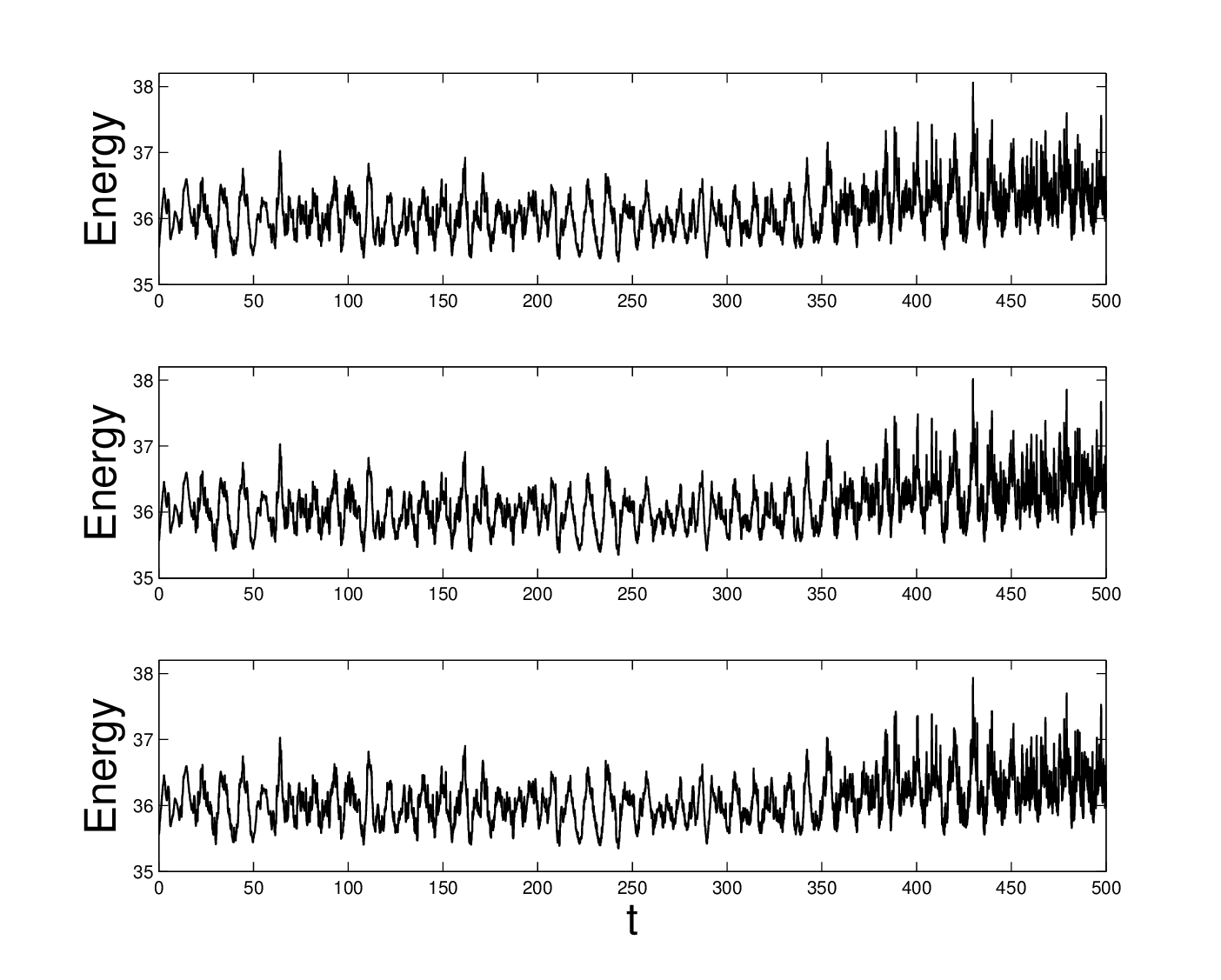}
		\caption{The discrete energy $E_N$ for the control-theoretic strategy. Integration in time was performed with the 4-th order Gauss (top), 4-th order Lobatto IIIA-IIIB (middle) and non-symplectic 5-th order Radau IIA (bottom) methods.}
		\label{fig: Energy Plot - control theoretic}
\end{figure}

As we did not use adaptive time-stepping and did not implement any mesh smoothing techniques, the quality of the mesh deteriorated with time in all the simulations, eventually leading to mesh crossing, i.e. two mesh points collapsing or crossing each other. The control-theoretic strategy, even though less accurate, retained good mesh quality longer, with the break-down time $T_{break}>1000$, as opposed to $T_{break}\sim 600$ in case of the Lagrange multiplier approach (both using a rather large constant time step). We discuss extensions to our approach for increased robustness in Section~\ref{sec:summary}. 

\section{Summary and future work}
\label{sec:summary}

We have proposed two general ideas how $r$-adaptive meshes can be applied in geometric numerical integration of Lagrangian partial differential equations. We have constructed several variational and multisymplectic integrators and discussed their properties. We have used the Sine-Gordon model and its solitonic solutions to test our integrators numerically. 

Our work can be extended in many directions. Interestingly, it also opens many questions in geometric mechanics and multisymplectic field theory. Addressing those questions may have a broad impact on the field of geometric numerical integration.

\subsubsection*{Non-hyperbolic equations}

The special form of the Lagrangian density \eqref{eq:ParticularDensity} we considered leads to a hyperbolic PDE, which poses a challenge to $r$-adaptive methods, as at each time step the mesh is adapted \emph{globally} in response to \emph{local} changes in the solution. \emph{Causality} and the structure of the characteristic lines of hyperbolic systems make $r$-adaptation prone to instabilities and integration in time has to be performed carefully. The literature on $r$-adaptation almost entirely focuses on parabolic problems (see \cite{HuangRussellREVIEW}, \cite{HuangRussellBOOK} and references therein). Therefore, it would be interesting to apply our methods to PDEs that are first-order in time, for instance the Korteweg-de Vries, Nonlinear Schr\"{o}dinger or Camassa-Holm equations. All three equations are first-order in time and are not hyperbolic in nature. Moreover, all can be derived as Lagrangian field theories (see \cite{CamassaHolm}, \cite{CamassaHolmHyman}, \cite{ChenQin}, \cite{DrazinSolitonsIntroduction}, \cite{FaouGeometricSchrodinger}, \cite{GotayKdV}, \cite{KouranbaevaShkoller}). The Nonlinear Schr\"{o}dinger equation has applications to optics and water waves, whereas the Korteweg-de Vries and Camassa-Holm equations were introduced as models for waves in shallow water. All equations possess interesting solitonic solutions. The purpose of $r$-adaptation would be to improve resolution, for instance, to track the motion of solitons by placing more mesh points near their centers and making the mesh less dense in the asymptotically flat areas.

\subsubsection*{Hamiltonian Field Theories}

Variational multisymplectic integrators for field theories have been developed in the Lagrangian setting (\cite{KouranbaevaShkoller}, \cite{MarsdenPatrickShkoller}). However, many interesting field theories are formulated in the Hamiltonian setting. They may not even possess a Lagrangian formulation. It would be interesting to construct Hamiltonian variational integrators for multisymplectic PDEs by generalizing the variational characterization of discrete Hamiltonian mechanics. This would allow to handle Hamiltonian PDEs without the need for converting them to the Lagrangian framework. Recently Leok \& Zhang~\cite{LeokZhang} and Vankerschaver \& Ciao \& Leok~\cite{VankerschaverLeok} have laid foundations for such integrators. It would also be interesting to see if the techniques we used in our work could be applied in order to construct $r$-adaptive Hamiltonian integrators.

\subsubsection*{Time adaptation based on local error estimates}

One of the challenges of $r$-adaptation is that it requires solving differential-algebraic or stiff ordinary differential equations. This is because there are two different time scales present: one defined by the physics of the problem and one following from the strategy we use to adapt the mesh. Stiff ODEs and DAEs are known to require time integration with an adaptive step size control based on local error estimates (see \cite{PetzoldDAE}, \cite{HWODE2}). In our work we used constant time-stepping, as adaptive step size control is difficult to combine with geometric numerical integration. Classical step size control is based on past information only, time symmetry is destroyed and with it the qualitative properties of the method. Hairer \& S\"{o}derlind~\cite{HairerSoderlind} developed explicit, reversible, symmetry-preserving, adaptive step size selection algorithms for geometric integrators, but their method is not based on local error estimation, thus it is not useful for $r$-adaptation. Symmetric error estimators are considered in \cite{JaySPARK} and some promising results are discussed. Hopefully, the ideas presented in those papers could be combined and generalized. The idea of Asynchronous Variational Integrators (see \cite{LewAVI}) could also be useful here, as this would allow to use a different time step for each cell of the mesh.

\subsubsection*{Constrained multisymplectic field theories}

The multisymplectic form formula \eqref{eq:MultisymplecticFormFormula} was first introduced in \cite{MarsdenPatrickShkoller}. The authors, however, consider only unconstrained field theories. In our work we start with the unconstrained field theory \eqref{eq:action}, but upon choosing an adaptation strategy represented by the constraint $G=0$ we obtain a constrained theory, as described in Section~\ref{sec:approach2} and Section~\ref{sec:multisymp approach2}. Moreover, this constraint is essentially nonholonomic, as it contains derivatives of the fields, and the equations of motion are obtained using the \emph{vakonomic} approach (also called variational nonholonomic) rather than the Lagrange-d'Alembert principle. All that gives rise to many very interesting and general questions. Is there a multisymplectic form formula for such theories? Is it derived in a similar fashion? Do variational integrators obtained this way satisfy some discrete multisymplectic form formula? These issues have been touched upon in \cite{MarsdenPekarskyShkollerWest}, but by no means resolved.

\subsubsection*{Mesh smoothing and variational nonholonomic integrators}

The major challenge of $r$-adaptive methods is \emph{mesh crossing}, which occurs when two mesh points collapse or cross each other. In order to avoid mesh crossing and retain good mesh quality, mesh smoothing techniques were developed (\cite{HuangRussellREVIEW}, \cite{HuangRussellBOOK}). They essentially attempt to regularize the exact equidistribution constraint $G=0$ by replacing it with the condition $\epsilon \, \partial X / \partial t = G$, where $\epsilon$ is a small parameter. This can be interpreted as adding some attraction and repulsion pseudoforces between mesh points. If one applies the Lagrange multiplier approach to $r$-adaptation as described in Section~\ref{sec:approach2}, then upon finite element discretization one obtains a finite dimensional Lagrangian system with a nonholonomic constraint. This constraint is enforced using the vakonomic (nonholonomic variational) formulation. Variational integrators for systems with nonholonomic constraints have been developed mostly in the Lagrange-d'Alembert setting, but there have also been some results regarding discrete vakonomic mechanics. The ideas presented in \cite{BenitoDiego2005}, \cite{Garcia}, and \cite{ColomboDiego2013} may be used to design structure-preserving mesh smoothing techniques.

\section*{Acknowledgements}

We would like to extend our gratitude to Michael Holst, Eva Kanso, Patrick Mullen, Tudor Ratiu, Ari Stern and Abigail Wacher for useful comments and suggestions. We are particularly indebted to Joris Vankerschaver and Melvin Leok for support, discussions and interest in this work. We dedicate this paper in memory of Jerrold E. Marsden, who began this project with us.



\end{document}